\newtheorem{theorem}{Theorem}[section]
\newtheorem{lemma}[theorem]{Lemma}
\newtheorem{corollary}[theorem]{Corollary}
\newtheorem{proposition}[theorem]{Proposition}
\newtheorem{definition}[theorem]{Definition}
\newcommand{\abs}[1]{\ensuremath{\lvert#1\rvert}}
\newcommand{\norm}[1]{\ensuremath{\lVert#1\rVert}}
\newcommand{\RR}{\ensuremath{\mathbb{R}}}
\newcommand{\NN}{\ensuremath{\mathbb{N}}}
\DeclareMathOperator{\minnorm}{m}
\DeclareMathOperator{\spanned}{span}
\DeclareMathOperator{\inte}{Int}
\begin{document}

\begin{frontmatter}



\title{Linearization and invariant manifolds on the carrying simplex for competitive maps\footnotemark{}\footnotetext{This is an accepted manuscript of a paper which is to appear in Journal of Differential Equations}\footnotemark{}\footnotetext{\copyright\ 2019. This manuscript version is made available under the CC-BY-NC-ND 4.0 license http://creativecommons.org/licenses/by-nc-nd/4.0/}}
\author[add1]{Janusz Mierczy\'{n}ski}\ead{janusz.mierczynski@pwr.edu.pl}
\author[add2]{Lei Niu}\ead{lei.niu@helsinki.fi}
\author[add3]{Alfonso Ruiz-Herrera}\ead{ruizalfonso@uniovi.es}
\address[add1]{Faculty of Pure and Applied Mathematics, Wroc{\l}aw University of Science and Technology, Wroc{\l}aw, Poland}
\address[add2]{Department of Mathematics and Statistics, University of Helsinki, Helsinki FI-00014, Finland}
\address[add3]{Departamento de Matem\'{a}ticas, Universidad de Oviedo, Oviedo, Spain}

\begin{abstract}
A result due to M.W. Hirsch states that most competitive maps admit a carrying simplex, i.e., an invariant hypersurface of codimension one which attracts all nontrivial orbits. The common approach in the study of these maps is to focus on the dynamical behavior on the carrying simplex. However, this manifold is normally non\nobreakdash-\hspace{0pt}smooth. Therefore, not every tool coming from Differential Geometry can be applied. In this paper we prove that the restriction of the map to the carrying simplex in a neighborhood of an interior fixed point is topologically conjugate to the restriction of the map to its pseudo\nobreakdash-\hspace{0pt}unstable manifold by an invariant foliation. This implies that the linearization techniques are applicable for studying the local dynamics of the interior fixed points on the carrying simplex. We further construct the stable and unstable  manifolds on the carrying simplex. Our results give partial responses  to Hirsch's problem regarding the smoothness of the carrying simplex. We discuss some applications  in classical models of population dynamics.
\end{abstract}

\begin{keyword}
Carrying simplex\sep invariant foliation \sep pseudo\nobreakdash-\hspace{0pt}stable manifold \sep pseudo\nobreakdash-\hspace{0pt}unstable manifold \sep linearization \sep invariant manifold



\end{keyword}

\end{frontmatter}



\section{Introduction}
Since the early work of Hirsch \cite{hirsch1988} and Smith \cite{smith1986}, it is well known that most competitive maps admit a  carrying simplex, that is, an invariant hypersurface of codimension one, such that every nontrivial orbit is attracted towards it; see \cite{ortega1998exclusion, wang2002uniqueness, diekmann2008carrying, hirsch2008existence, Ruiz-Herrera2013, Baigent2015, jiang2015, Jiang2017}. The importance of the  carrying simplex stems from the fact that it captures the relevant long-term dynamics.  In particular, all nontrivial fixed points, periodic orbits, invariant closed curves and heteroclinic cycles lie on the carrying simplex (see, for example, \cite{jiang2015, Jiang2016, Jiang2017, Gyllenberg2018a, Gyllenberg2018b,Gyllenberg2019}). In order to analyze the global dynamics of such discrete-time systems, it suffices to study the dynamics of the systems restricted to this invariant hypersurface. In particular,  one can use the topological results on the homeomorphisms of the plane such as the translation arc and degree (Ruiz-Herrera \cite{Ruiz-Herrera2013}, Jiang and Niu \cite{Jiang2016} and  Niu and Ruiz-Herrera \cite{nonlinearity2018}) for three\nobreakdash-\hspace{0pt}dimensional competitive maps with a carrying simplex.

In \cite{hirsch1988}, Hirsch posed the problem to determine conditions under which the carrying simplex is a smooth manifold (see \cite[P. 61]{hirsch1988}). This is a long-open question in dynamical systems with two direct applications. Obviously, the smoothness of the carrying simplex provides geometrical information on the manifold. On the other hand, and more importantly, the smoothness of the carrying simplex allows us to apply the tools coming from Differential Geometry, especially, the Grobman--Hartman theorem. To the best of our knowledge, the available results on the smoothness of the carrying simplex are the following: Jiang, Mierczy\'nski and Wang in \cite{Jiang2009} gave equivalent conditions, expressed in terms of inequalities between Lyapunov exponents, for the carrying simplex to be a $C^1$ submanifold-with-corners, neatly embedded in the nonnegative orthant (for sufficient conditions in the case of ordinary differential equations, see Brunovsk\'y \cite{Palo1994}, Mierczy\'nski \cite{Mierczynski1994}, or, for the $C^k$ property in discrete time systems, Bena\"{\i}m \cite{Benaim1997} and, in ordinary differential equations, Mierczy\'nski \cite{Mierczynski1999a}).  Mierczy\'nski proved in \cite{Mierczynski2018b,Mierczynski2018a} that the carrying simplex is a $C^1$ submanifold-with-corners neatly embedded in the nonnegative orthant when it is convex. For the convexity of the carrying simplex and their influence on the global dynamics, we refer the reader to \cite{Zeeman1994, Zeeman2003, Baigent2015, Baigent2017, Baigent2019}.  Whether the carrying simplex is smooth or not is still unknown when it is not convex. Mierczy\'nski in \cite{Mierczynski1999b, Mierczynski1999b} in the case of ordinary differential equations and Jiang, Mierczy\'nski and Wang in \cite{Jiang2009} in the case of maps do provide examples which show that the carrying simplex at a boundary fixed point can be far from smooth. However, no examples are known of the lack of smoothness in the interior of a carrying simplex. Roughly speaking, many competitive maps admit a reduction of the dimension but we do not know if this reduction is smooth.

This scenario suggests the following interesting questions: \emph{Can we use the ``linearization" techniques on the carrying simplex to study the local dynamics around a fixed point?  How should we construct the stable and unstable  manifolds even if the carrying simplex is not smooth?} It is well known that linearization techniques and invariant manifolds are important tools in the study of smooth dynamical systems (see, for example,  \cite{Pugh1969, Hirsch1977, Quandt1986, Zhang1993, Bronstein1994, Tan2000, Nipp2013, Kuznetsov3}). In this paper, we  prove that one can still use the ``linearization" techniques to study the dynamics on the carrying simplex even if it is non-smooth. Furthermore, we  construct the stable and unstable manifolds of an interior fixed points on the carrying simplex by those of the conjugate ``linear" term of the reduction.

The main tool of this paper consists in a topological result that guarantees the existence of an invariant foliation in a neighborhood of a fixed point when the inverse of its Jacobian matrix has strictly positive entries. This result (Theorem \ref{thm:foliation}) is deduced in Section \ref{sec:sec-inv-fol} and could be perceived not only as a technique for constructing the invariant manifolds on the carrying simplex but have its own interest. We refer the reader to \cite{Hirsch1977, Fenichel1979, Palis1993, Chow1991, Bates2000, zhang_zhang_2016, Zhang2016} for the discussion and application of invariant foliations in the study of dynamical systems. By using the previous invariant foliation, we prove in Section \ref{inv-maniflod} that the restriction of the map to the carrying simplex in a neighborhood of an interior fixed point is topologically conjugate to the restriction of the map to its pseudo-unstable manifold (Theorem \ref{tconjugacy}). This means that linearization techniques are applicable for studying the local dynamics on the carrying simplex because the restriction to its pseudo-unstable manifold is smooth. The consequence is that the invariant manifolds of the interior fixed points on the carrying simplex are homeomorphic to those of the restriction to its pseudo-unstable manifold (Theorem \ref{thm:inv-local-mani}). We will then prove the continuity of the tangent cones of the carrying simplex near the interior fixed points (Theorem \ref{thm:tangent}). Tangent cones also play remarkable roles in the study of global stability of the monotone dynamical systems (see \cite{Tineo2008, Zeeman2003}). In Section \ref{sec:sec-application}, we apply our results to some classical models in population dynamics that include the Leslie--Grower models, Atkinson--Allen models and Ricker models. In particular, we show that the stable manifold of the interior fixed point on the carrying simplex for three\nobreakdash-\hspace{0pt}dimensional competitive maps is indeed a simple curve when its index is $-1$, which solves an open problem in \cite{nonlinearity2018}. It is worth noting that many results of the paper can be applied to maps that admit a non-smooth center manifold.

\section{Notation and definitions}\label{sec:sec-notation}
\noindent Throughout this paper, we need the following notation and definitions. As usual, $\lVert \cdot \rVert$ stands for the Euclidean norm in $\mathbb{R}^n$, as well as for the operator norm with respect to the Euclidean norm.  For a linear automorphism $A \colon Z \to Z$, where $\{0\} \ne Z \subset \mathbb{R}^n$, we denote by $\minnorm(A)$ its \emph{co-norm},
\begin{equation*}
  \minnorm(A) := \min\{\, \lVert Au \rVert : u \in Z, \lVert u \rVert = 1 \,\}.
\end{equation*}

Let $\mathbb{R}^n_+ := \{x\in \mathbb{R}^n: x_i\geq 0 \mathrm{~for~all~} i=1,\ldots,n\}$ be the usual nonnegative orthant. The interior of $\mathbb{R}^n_+$ is the open cone $\inte{\mathbb{R}^n_+} := \{x\in \mathbb{R}^n_+ : x_i > 0$ for all $i = 1,\ldots,n\}$ and the boundary of $\mathbb{R}^n_+$ is $\partial \mathbb{R}^n_+ := \mathbb{R}^n_+ \setminus  \inte{\mathbb{R}^n_+}$.

For $x, y \in \mathbb{R}^n$, we write $x \le  y$ if $x_i \le  y_i$ for all $i=1,\ldots,n$, and $x \ll  y$ if $x_i < y_i$ for all $i=1,\ldots,n$. If $x \le  y$ but $x \ne  y$ we write $x <  y$. The reverse relations are
denoted by $\geq, >,\gg$, and so forth.

For a differentiable map $P$,  the Jacobian matrix of $P$ at the point $x$ is denoted by $DP(x)$.

\begin{definition}
A map $T \colon \mathbb{R}^{n}_{+} \to \mathbb{R}^{n}_{+}$ is {\em competitive} in a subset $W \subset \mathbb{R}^{n}_{+}$, if, for all $x, y \in W$ with $ T(x) < T(y)$, one has that $x_i < y_i$ provided $y_i > 0$.
\end{definition}

The carrying simplex for a map $T \colon \mathbb{R}^{n}_{+} \to \mathbb{R}^{n}_{+}$ is an invariant subset $S \subset \mathbb{R}^{n}_{+}$ with the following properties:
\begin{enumerate}[(H1)]
\item
    No two points in $S$ are related by the $<$ relation.
\item
    $S$ is homeomorphic via radial projection to the $(n-1)$\nobreakdash-\hspace{0pt}dimensional standard probability simplex $\Delta^{n-1} := \{x\in \mathbb{R}_+^n:\sum_{i=1}^n x_i=1\}$.
\item
    For any $x \in \mathbb{R}^{n}_{+}\setminus \{0\}$, there is some $y\in S$ such that $\displaystyle \lim_{m\to +\infty} \norm{T^m(x) - T^m(y)} = 0$.
\item
    $T(S) = S$ and $T|_S \colon S \to S$ is a homeomorphism.
\item
    $S$ is the boundary (relative to $\mathbb{R}^{n}_{+}$) of the global attractor $\Gamma$, which equals $\{\alpha x :  \alpha\in [0, 1], x \in S\}$. Moreover, $\Gamma \setminus S = \{\alpha x :  \alpha\in [0, 1), x \in S\}$ is the basin of repulsion of the origin.
\end{enumerate}

Most competitive maps, especially those used in population dynamics, admit a carrying simplex, which determines the dynamical behavior of the systems. The reader can consult \cite{ortega1998exclusion, wang2002uniqueness, diekmann2008carrying, hirsch2008existence, Ruiz-Herrera2013, Baigent2015, jiang2015, Jiang2017} for precise results on the existence of a carrying simplex in competitive maps.

\section{Invariant foliation in a neighborhood of a fixed point when the inverse of the Jacobian matrix is positive}\label{sec:sec-inv-fol}
Consider a map
 \begin{equation*}
 P \colon V \subset \mathbb{R}^{n} \to P(V) \subset \mathbb{R}^{n}
 \end{equation*}
 of class $C^{1}$ defined on an open neighborhood $V$ of $q\in\mathbb{R}^{n}$ with $P(q) = q$. We assume the following condition:
\begin{enumerate}
\item[\textbf{(C1)}]
    There exists  $(DP(q))^{-1}$ and its entries are strictly positive. Moreover, the eigenvalue of $DP(q)$ with the smallest modulus, say $\mu$, satisfies $0 < \mu < 1$.
\end{enumerate}
The classical Perron--Frobenius theorem guarantees that the first statement of (C1) implies that $\mu$ is always a simple positive eigenvalue. Moreover, the corresponding invariant subspace is spanned by some $v \gg 0$.  The invariant subspace $W$ of $\RR^n$ that corresponds to the remaining  eigenvalues of $DP(q)$ intersects $\mathbb{R}^{n}_{+}$ only at the origin.\newline Fix
\begin{equation*}
  \rho \in (\mu, \min \{1, \nu\})
\end{equation*}
with  $\nu$  the modulus of the eigenvalue(s) of $DP(q)$ with the second smallest modulus. The spectrum of  $DP(q)$ consists of two nonempty parts: one, consisting of a simple eigenvalue $\mu$, contained inside the circle centered at zero with radius $\rho$, and the one contained outside this circle. Note that $\det DP(q)\neq 0$, so, as in Sections~\ref{sec:sec-inv-fol} and~\ref{inv-maniflod} we are interested in the local behavior only, we can assume, without loss of generality, that $P$ is a diffeomorphism taking $V$ onto $P(V)$.

Now we present the main result of this section. In the statement of the theorem we employ the above notation.

\begin{theorem}\label{thm:foliation}
Assume that \textup{(C1)} holds. Then, fixed $\sigma \in (\rho, \nu)$, there exist a neighborhood $U$ of $q$ and the following objects:
\begin{enumerate}
  \item[\textup{(a)}]
  A one\nobreakdash-\hspace{0pt}dimensional $C^1$ manifold $M_1 \subset U$ that is tangent at $q$ to $v$. Moreover,
  \begin{equation*}
    \norm{P({\xi'}) - {P(\xi'')}} \leq \rho \norm{{\xi'} - {\xi''}} \quad \text{for each } {\xi', \xi''} \in M_1.
  \end{equation*}
  $M_1$ is \emph{positively invariant}, {\it i.e.} if $\xi \in M_1$, then $P(\xi) \in M_1$.
  \item[\textup{(b)}]
A one-codimensional $C^1$ manifold $M_2 \subset U$ that is tangent at $q$ to $W$. $M_2$ is \emph{locally invariant} in the sense that if $y \in M_2$ and $P(y) \in U$,  then we have that $P(y) \in M_2$. Analogously, if $y \in M_2$ and  $P^{-1}(y) \in U$, then we have that $P^{-1}(y) \in M_2$.  Moreover, there is $l \in \mathbb{N}$ so that
  \begin{equation}\label{eq:M2}
    \norm{P^{-l}({y'}) - {P^{-l}(y'')}} \leq \sigma^{-l} \norm{{y' - y''}}
  \end{equation}
  for any ${y', y''} \in M_2$ with $P^{-1}({y'}), \ldots, P^{-l}({y'}){, P^{-1}(y''), \ldots, P^{-l}(y'')} \in M_2$.

  \item[\textup{(c)}]
  A foliation $\mathcal{L}$ of $U$ by $C^1$ embedded segments $L_y$ \emph{(leaves)}, parameterized by $y \in M_2$ and linearly ordered by the $\ll$ relation.  The foliation $\mathcal{L}$ is \emph{locally invariant}. That is, for any $y\in M_{2}$ and $\xi\in L_{y}$, we have the following:
  \begin{itemize}
    \item  If $P(y) \in M_2$, then $P(\xi) \in L_{P(y)}$.
    \item  If $P^{-1}(y) \in M_2$ and $P^{-1}(\xi) \in U$, then $P^{-1}(\xi) \in L_{P^{-1}(y)}$.
  \end{itemize}
Moreover,
  \begin{equation*}
    \lVert P({\xi'}) - P({\xi''}) \rVert \leq \rho \lVert {\xi'} - {\xi''} \rVert
  \end{equation*}
  for any $y \in M_2$ and any ${\xi', \xi''} \in L_y$, provided that $P(y) \in M_2$.
\end{enumerate}
\end{theorem}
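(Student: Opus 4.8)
The plan is to identify the three objects with, respectively, a \emph{strong stable manifold}, a \emph{pseudo-unstable (center--unstable) manifold}, and a \emph{strong stable foliation} for $P$ at $q$, and then to squeeze out of the standard invariant-manifold machinery the explicit contraction rates $\rho$ and $\sigma^{-l}$ together with the order structure of the leaves. After translating $q$ to the origin we split $\mathbb{R}^{n}=\spanned\{v\}\oplus W$ as in (C1); the chain
\[
\mu<\rho<\sigma<\nu
\]
exhibits $\spanned\{v\}$ as the one-dimensional dominant contraction direction, dominated by $W$ with a uniform gap. Since $\det DP(q)\ne 0$, $P$ is a local diffeomorphism. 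The standard device is to fix a small ball $B(q,r_{0})$ and replace $P$ by a globally defined $C^{1}$ map $\tilde P$ that agrees with $P$ on $B(q,r_{0})$ and differs from $x\mapsto q+DP(q)(x-q)$ by a globally Lipschitz perturbation of arbitrarily small Lipschitz constant; the three objects are first built for $\tilde P$, for which they are genuinely invariant, and the assertions for $P$ are recovered by restricting to a sufficiently small $U=B(q,r)$ with $r\ll r_{0}$, chosen so that every orbit segment entering one of the estimates stays inside $B(q,r_{0})$, where $\tilde P=P$.

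\emph{The manifold $M_{1}$.} Because $\mu$ is simple and strictly smaller in modulus than every other eigenvalue of $DP(q)$, the strong stable manifold theorem (see e.g. \cite{Hirsch1977}) applied to $\tilde P$ produces a global $C^{1}$ curve, the graph of a map $g\colon\spanned\{v\}\to W$ with $g(0)=0$ and $Dg(0)=0$; let $M_{1}$ be its intersection with $U$, so that $M_{1}$ is tangent at $q$ to $v$. Restricted to $M_{1}$, $P$ is $C^{1}$ and conjugate to a scalar map whose derivative at $q$ is $\mu<\rho$, so after shrinking $U$ its Lipschitz constant along the connected arc $M_{1}$ is at most $\rho$ (mean value inequality along $M_{1}$, absorbing the arc/chord distortion, which tends to $1$ as $U$ shrinks); the same contraction keeps forward $P$-orbits issuing from $M_{1}$ inside $M_{1}\cap U$, which is positive invariance.

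\emph{The manifold $M_{2}$ and the estimate \eqref{eq:M2}.} Symmetrically, with the spectral gap now at $\rho$, the pseudo-unstable manifold theorem yields a $C^{1}$ graph $f\colon W\to\spanned\{v\}$ with $f(0)=0$, $Df(0)=0$, whose intersection with $U$ is $M_{2}$, tangent at $q$ to $W$ and locally invariant for $P$ in both time directions. The derivative of $P^{-1}|_{M_{2}}$ at $q$ is $(DP(q))^{-1}|_{W}$, whose spectral radius equals $1/\nu<1/\sigma$; by Gelfand's spectral radius formula there is $l\in\NN$ with $\norm{\bigl((DP(q))^{-1}|_{W}\bigr)^{l}}<\sigma^{-l}$. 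Shrinking $U$ so that $\norm{D(P^{-l}|_{M_{2}})}<\sigma^{-l}$ on the relevant part of $M_{2}$ and applying the mean value inequality along the connected $C^{1}$ graph $M_{2}$ gives \eqref{eq:M2}; the hypothesis that the intermediate iterates of $y'$ and $y''$ lie in $M_{2}$ is exactly what keeps the estimate inside the chart.

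\emph{The foliation $\mathcal{L}$ and its order.} The strong stable (fake) foliation theorem (see e.g. \cite{Hirsch1977, Fenichel1979}) furnishes a continuous, $P$-invariant foliation of a neighborhood of $q$ by $C^{1}$ curves tangent to the continuous distribution $E^{ss}$, with $E^{ss}(q)=\spanned\{v\}$; since $E^{ss}$ is complementary to $W$ near $q$, $M_{2}$ is transverse to every leaf and, after shrinking $U$, meets each one exactly once, which furnishes the parameterization $y\mapsto L_{y}$, with $L_{q}=M_{1}$ by uniqueness of the strong stable manifold. Forward and backward local invariance, and the rate $\rho$ along $L_{y}$ when $P(y)\in M_{2}$, follow exactly as for $M_{1}$ from the uniformity over leaves of the estimates in the foliation theorem. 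Finally, after one further shrinking every unit tangent vector of every leaf is $C^{1}$-close to $v\gg 0$, hence lies in the open cone $\inte\RR^{n}_{+}$, so moving along any $L_{y}$ strictly increases each coordinate: each leaf is a $\ll$-monotone arc, i.e. $\ll$ restricts to a linear order on $L_{y}$. The genuinely delicate point is not any single construction --- each is classical --- but the simultaneous localization: producing one neighborhood $U$ on which $M_{1}$ is positively invariant and $\rho$-contracting, $M_{2}$ is locally invariant and $\sigma^{-l}$-contracting under $P^{-l}$, and $\mathcal{L}$ is locally invariant and $\rho$-contracting along leaves, all while keeping every orbit segment appearing in the estimates inside the region where $\tilde P=P$; the cutoff construction together with the passage to the power $l$ in the backward estimate is precisely what makes this bookkeeping consistent.
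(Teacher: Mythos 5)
Your proposal is correct and follows essentially the same route as the paper: a global cutoff modification of $P$ near $q$, the Hirsch--Pugh--Shub pseudo-stable/pseudo-unstable manifold and invariant foliation theorems for the splitting $\spanned\{v\}\oplus W$, mean value inequalities (with control of chord directions) to extract the rates $\rho$ and $\sigma^{-l}$, and Perron--Frobenius positivity of $v$ to make each leaf a $\ll$-monotone arc. The only substantive step you fold into a citation is the verification that $M_2$ is normally attracting with the required rate gap (the paper's Lemma on the invariant subbundle $E$, proved by a graph-transform contraction), which is what licenses the foliation theorem; this is a difference of detail, not of method.
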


The goal of the rest of the section is to prove Theorem \ref{thm:foliation}. For simplicity in the notation,  we assume that the fixed point is the origin. Next we give several preliminary results.

\begin{lemma}
\label{lm:P-modified}
  For each $\epsilon > 0$, there exist $\eta > 0$ and a $C^1$ diffeomorphism $P_{\epsilon} \colon \mathbb{R}^n \to \mathbb{R}^n$ that satisfies
  \begin{equation*}
    P_{\epsilon}(\xi) =
    \begin{cases}
      P(\xi) & \text{for } \lVert \xi\rVert \le \tfrac{1}{2}\eta
      \\
      DP(0)\xi & \text{for } \lVert \xi \rVert \ge \eta
    \end{cases}
  \end{equation*}
  and
  \begin{equation*}
    \norm{DP_{\epsilon}(\xi) - DP(0)} < \epsilon, \quad \xi \in \RR^n.
  \end{equation*}
\end{lemma}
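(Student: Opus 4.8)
The plan is the standard cut-off construction. Set $A := DP(0)$ and, using $P(0) = 0$, write $P(\xi) = A\xi + R(\xi)$ on $V$, where $R$ is $C^1$ with $R(0) = 0$ and $DR(0) = DP(0) - A = 0$. By continuity of $DR$ at the origin, for every $\delta > 0$ there is $r(\delta) > 0$ such that $\norm{DR(\xi)} < \delta$ whenever $\norm{\xi} \le r(\delta)$, and then $\norm{R(\xi)} \le \delta\norm{\xi}$ on that ball by the mean value inequality. Fix once and for all a $C^1$ function $\psi \colon [0,\infty) \to [0,1]$ with $\psi \equiv 1$ on $[0,\tfrac12]$, $\psi \equiv 0$ on $[1,\infty)$, and $\abs{\psi'} \le C$ for some absolute constant $C$; put $\chi(\xi) := \psi(\norm{\xi})$, so that $\chi$ is $C^1$, $\norm{D\chi} \le C$, and $\chi$ together with $D\chi$ vanishes for $\norm{\xi} \ge 1$.

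Given $\epsilon > 0$, we may assume $\epsilon < \minnorm(A) = 1/\norm{A^{-1}}$ after replacing $\epsilon$ by $\min\{\epsilon, \tfrac12\minnorm(A)\}$ if necessary, since the conclusion for a smaller value of $\epsilon$ implies it for the original one. Choose $\delta := \epsilon/\bigl(2(C+1)\bigr)$, then $\eta := r(\delta)$, shrinking $\eta$ further so that the closed ball of radius $\eta$ about the origin lies in $V$, and define
\begin{equation*}
  P_\epsilon(\xi) := A\xi + \chi(\xi/\eta)\,R(\xi),
\end{equation*}
where $\chi(\xi/\eta)R(\xi)$ is understood to be $0$ for $\norm{\xi} \ge \eta$; this is a globally defined $C^1$ map, since the term $\chi(\xi/\eta)R(\xi)$ is supported in $\norm{\xi} \le \eta$ and, as $\psi$ and $\psi'$ vanish at $1$, glues $C^1$ to $0$ across $\norm{\xi} = \eta$. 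For $\norm{\xi} \le \tfrac12\eta$ we get $\chi(\xi/\eta) = 1$, hence $P_\epsilon(\xi) = P(\xi)$; for $\norm{\xi} \ge \eta$ we get $\chi(\xi/\eta) = 0$, hence $P_\epsilon(\xi) = A\xi$. Differentiating, $DP_\epsilon(\xi) - A = \tfrac1\eta (D\chi)(\xi/\eta)R(\xi) + \chi(\xi/\eta)DR(\xi)$; this vanishes for $\norm{\xi} \ge \eta$, and for $\norm{\xi} \le \eta \le r(\delta)$ it is bounded in norm by $\tfrac1\eta \cdot C \cdot \delta\eta + \delta = (C+1)\delta = \tfrac\epsilon2 < \epsilon$, which is the asserted derivative estimate.

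It remains to see that $P_\epsilon$ is a $C^1$ diffeomorphism of $\RR^n$ onto itself. Write $P_\epsilon = A \circ (\mathrm{id} + h)$ with $h(\xi) := A^{-1}\chi(\xi/\eta)R(\xi)$; then $\norm{Dh(\xi)} \le \norm{A^{-1}}\,\norm{DP_\epsilon(\xi) - A} < \norm{A^{-1}}\epsilon < 1$, so $h$ is globally Lipschitz with some constant $k < 1$. By a standard argument, $\mathrm{id} + h$ is then a homeomorphism of $\RR^n$: it is injective because $\norm{(\xi_1 + h(\xi_1)) - (\xi_2 + h(\xi_2))} \ge (1-k)\norm{\xi_1 - \xi_2}$; it is surjective because for each $y$ the map $\xi \mapsto y - h(\xi)$ is a contraction of the complete space $\RR^n$ and hence has a fixed point; and its inverse is continuous (indeed Lipschitz) by the same lower bound. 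Since $D(\mathrm{id} + h) = I + Dh$ is invertible everywhere (its operator-norm distance to $I$ is $< 1$), $\mathrm{id} + h$ is a $C^1$ diffeomorphism, and therefore so is $P_\epsilon = A \circ (\mathrm{id} + h)$.

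The construction is elementary; the only genuine point, and the one that uses more than cut-off bookkeeping, is the last paragraph — converting the closeness of $DP_\epsilon$ to $A$ into the global diffeomorphism property via the perturbation-of-the-identity argument, together with the harmless reduction to $\epsilon < 1/\norm{A^{-1}}$. The one subtlety to watch in the middle step is the order of the choices: $\delta$ must be selected from $\epsilon$ \emph{before} setting $\eta := r(\delta)$, so that the factor $1/\eta$ produced by differentiating $\chi(\cdot/\eta)$ is absorbed by the bound $\norm{R(\xi)} \le \delta\eta$ valid on $\norm{\xi} \le \eta$.
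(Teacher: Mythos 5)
Your proof is correct, and the cut-off construction itself ($P_\epsilon = DP(0)\xi + \text{bump}\cdot(P(\xi)-DP(0)\xi)$, with the derivative of the rescaled bump producing a factor $1/\eta$ that is absorbed by the bound $\norm{R(\xi)}\le\delta\norm{\xi}\le\delta\eta$) is exactly the one the paper uses, down to the same two-term estimate for $DP_\epsilon(\xi)-DP(0)$. Where you diverge is in certifying that $P_\epsilon$ is a global diffeomorphism: the paper invokes the openness of the set of $C^1$ diffeomorphisms of $\RR^n$ in the strong Whitney topology (citing Hirsch's \emph{Differential Topology}, Thm.~2.1.7) to conclude that any map whose $1$-jet is uniformly close to that of the linear isomorphism $DP(0)$ is itself a diffeomorphism, whereas you write $P_\epsilon = DP(0)\circ(\mathrm{id}+h)$ with $\mathrm{Lip}(h)<1$ and run the elementary perturbation-of-the-identity argument (injectivity from the lower Lipschitz bound, surjectivity from the contraction $\xi\mapsto y-h(\xi)$, smoothness of the inverse from the inverse function theorem since $I+Dh$ is everywhere invertible). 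Your route is self-contained and avoids the black-box citation, at the harmless cost of the preliminary reduction to $\epsilon<1/\norm{DP(0)^{-1}}$; the paper's route is shorter on the page but outsources the real content to Hirsch's openness theorem. Both are valid; your bookkeeping (choosing $\delta$ from $\epsilon$ before setting $\eta := r(\delta)$, and checking that the extension by zero is $C^1$ across $\norm{\xi}=\eta$ because $\psi$ and $\psi'$ both vanish there) is in order.
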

\begin{proof}
Take a  $C^{\infty}$ function $f \colon [0, \infty) \to [0,1]$ with the property that $f(r) = 1$ if and only if $r \in [0, 1/2]$ and $f(r) = 0$ if and only if $r \ge 1$. For each $\eta > 0$, we define
\begin{equation*}
  P_{\eta}(\xi) := DP(0)\xi + f\Bigl(\frac{\lVert \xi \rVert}{\eta}\Bigr) (P(\xi) - DP(0)\xi), \quad \xi \in \mathbb{R}^n.
\end{equation*}
  By~\cite[Thm.~2.1.7]{Hirsch1994}, the set of $C^1$ diffeomorphisms of $\mathbb{R}^n$ onto itself is open in the $C^1$ strong (Whitney) topology.  As the linear map
  \begin{equation*}
    \xi \mapsto DP(0)\xi
  \end{equation*}
  is a diffeomorphism,  there is a continuous function $\delta \colon \mathbb{R}^n \to (0, \infty)$ with the following property:  if $Q \colon \mathbb{R}^n \to \mathbb{R}^n$ is a map of class $C^1$ such that the difference between the $1$\nobreakdash-\hspace{0pt}jet of $Q$ and the 1\nobreakdash-\hspace{0pt}jet of $DP(0)$ at the point $\xi$ is smaller than $\delta(\xi)$ for all $\xi\in \mathbb{R}^n$, then $Q$ is a diffeomorphism.  The $1$\nobreakdash-\hspace{0pt}jets of $P_{\eta}$ and $DP(0)$ coincide for $\lVert \xi \rVert > \eta$. Hence it suffices to estimate the $C^1$ norm of
\begin{equation}\label{estimate}
    \xi \mapsto f\left(\frac{\lVert \xi \rVert}{\eta}\right) (P(\xi) - DP(0)\xi)
\end{equation}
 restricted to $\lVert \xi \rVert \le \eta$.  We know that $f$ takes values between $0$ and $1$ and
 \begin{equation*}
   \lVert P(\xi) - DP(0)\xi \rVert \to 0
 \end{equation*}
 as $\xi \to 0$. This implies that the $C^0$ norm of \eqref{estimate} tends to zero as $\eta \longrightarrow 0^{+}$. On the other hand,  $DP_{\eta}(\xi) - DP(0)$ is equal to
\begin{equation*}
   \frac{1}{\eta} f'\Bigl(\frac{\lVert \xi \rVert}{\eta}\Bigr) \frac{P(\xi) - DP(0)\xi}{\lVert \xi \rVert} \xi^{\top} + f\Bigl(\frac{\lVert \xi \rVert}{\eta}\Bigr) (DP(\xi) - DP(0)),\quad \xi \in \RR^n,
\end{equation*}
where $\xi^{\top}$ is the transpose of $\xi$. In the first summand, the norm of $f'(\frac{\lVert \xi \rVert}{\eta}) \frac{\xi^{\top}}{\eta}$ is bounded as $\lVert \xi \rVert \le \eta$.  Further, $\lVert P(\xi) - DP(0)\xi \rVert/\lVert \xi \rVert \to 0$ as $\xi \to 0$.  This implies that the first summand converges to $0$ as $\eta \to 0^+$.  The second summand converges to $0$ as $\eta \to 0^+$ as well.

Collecting all the information, we have proved that the difference between the $1$\nobreakdash-\hspace{0pt}jets of $P_{\eta}(\xi)$ and $DP(0)$ tends to $0$ as $\eta \longrightarrow 0^{+}$. For $\delta^{*} = \min \{\delta(\xi) : \Vert \xi \rVert \leq 1\}$,  we take $\eta_{0} \leq 1$ small enough so that the difference between the $1$\nobreakdash-\hspace{0pt}jets of $P_{\eta_{0}}$ and $DP(0)$ is smaller that $\min\{\epsilon,\delta^{*}\}$.  The map $P_{\eta_{0}}$ is the desired diffeomorphism $P_{\epsilon}$.
\end{proof}

In the sequel we will apply the results in~\cite{Hirsch1977} on invariant manifolds, invariant foliations, etc., which are formulated for small $C^1$ perturbations of linear maps.  It will be tacitly assumed that $\epsilon > 0$ is so small that a corresponding result in~\cite{Hirsch1977} can be applied to $P_{\epsilon}$ chosen from Lemma~\ref{lm:P-modified}.

\begin{proposition}
\label{prop:aux1}
For a suitable $\epsilon>0$, the map $P_{\epsilon}$ admits the following objects:
    \begin{enumerate}
    \item[\textup{(i)}]
    An invariant one-dimensional $C^1$ manifold $M_1$, tangent at $0$ to $v$, whose elements are characterized as those $\xi \in \mathbb{R}^n$ for which $\lVert P_{\epsilon}^k(\xi) \rVert/\rho^k$ stays bounded as $k \to +\infty$ \textup{[}or, equivalently, as those $\xi \in \mathbb{R}^n$ for which
    \begin{equation*}
      \lVert P_{\epsilon}^k(\xi) \rVert/\rho^k \to 0
    \end{equation*}
    as $k \to +\infty$\textup{]},

    \item[\textup{(ii)}]
    An invariant one-codimensional $C^1$ manifold $M_2$, tangent at $0$ to $W$, whose elements are characterized as those $y \in \mathbb{R}^n$ for which $\lVert P_{\epsilon}^{-k}(y) \rVert/{\rho^{-k}}$ stays bounded as $k \to +\infty$ \textup{[}or, equivalently, as those $y \in \mathbb{R}^n$ for which
    \begin{equation*}
      \lVert P_{\epsilon}^{-k}(y) \rVert/{\rho^{-k}} \to 0
    \end{equation*}
    as $k \to +\infty$\textup{]}.
  \end{enumerate}
\end{proposition}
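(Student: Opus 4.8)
The plan is to identify $M_1$ and $M_2$ as the pseudo\nobreakdash-\hspace{0pt}stable and pseudo\nobreakdash-\hspace{0pt}unstable manifolds at rate $\rho$ of $P_{\epsilon}$, regarded as a global $C^1$ perturbation of the linear isomorphism $A := DP(0)$, and to deduce every assertion from the invariant\nobreakdash-\hspace{0pt}manifold (section) theory of~\cite{Hirsch1977}. First I would record the linear data: by \textup{(C1)} and Perron--Frobenius, $A$ leaves invariant the splitting $\mathbb{R}^n = E \oplus W$, with $E = \spanned\{v\}$ carrying the simple eigenvalue $\mu$ and $W$ (of dimension $n-1$) carrying the rest of the spectrum, of modulus $\ge \nu$. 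Since $0 < \mu < \rho < \nu$, this splitting has a spectral gap at $\rho$: after passing, if necessary, to a norm adapted to $E \oplus W$ (which can be taken as close to the Euclidean one as we wish), $\norm{A|_E} < \rho$ and $\minnorm(A|_W) > \rho$, and equivalently $\norm{A^{-1}|_W} < \rho^{-1} < \minnorm(A^{-1}|_E)$; one may in fact arrange $\norm{A|_E} < \mu'$ and $\minnorm(A|_W) > \nu_1$ for suitable $\mu' \in (\mu,\rho)$, $\nu_1 \in (\rho,\nu)$.

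Next I would fix $\epsilon > 0$ small enough (depending only on $A$, the splitting, and $\rho$) that the pseudo\nobreakdash-\hspace{0pt}hyperbolic invariant\nobreakdash-\hspace{0pt}manifold theorem of~\cite{Hirsch1977} applies to every global $C^1$ map whose derivative is everywhere within $\epsilon$ of $A$; Lemma~\ref{lm:P-modified} provides such a $P_{\epsilon}$. Applied to $P_{\epsilon}$ at rate $\rho$, the theorem furnishes a $C^1$ invariant manifold $M_1$ which is the graph of a $C^1$ map $E \to W$ vanishing together with its derivative at $0$ (so $M_1$ is tangent at $0$ to $v$) and which equals $\{\xi : \sup_k \norm{P_{\epsilon}^k(\xi)}/\rho^k < \infty\}$. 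For $M_2$ I would apply the same theorem to the diffeomorphism $P_{\epsilon}^{-1}$ at rate $\rho^{-1}$: note that $P_{\epsilon}^{-1}$ agrees with $A^{-1}$ outside a compact set (because $P_{\epsilon} = A$ off $\{\norm{\xi} < \eta\}$ and $A$ is linear) and, by continuity of matrix inversion together with shrinking $\epsilon$, that $\norm{DP_{\epsilon}^{-1}(y) - A^{-1}}$ can be made as small as required; since the gap of the previous paragraph interchanges the roles of $E$ and $W$ for $A^{-1}$, the theorem yields a $C^1$ invariant manifold $M_2$, the graph of a $C^1$ map $W \to E$ vanishing to first order at $0$ (hence tangent at $0$ to $W$), equal to $\{y : \sup_k \norm{P_{\epsilon}^{-k}(y)}/\rho^{-k} < \infty\}$. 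Equivalently, $M_2$ is the pseudo\nobreakdash-\hspace{0pt}unstable manifold produced directly by~\cite{Hirsch1977} for $P_{\epsilon}$.

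Finally I would upgrade ``bounded'' to ``convergent to $0$'' in both characterizations. Read in the graph chart $E \to W$, $P_{\epsilon}|_{M_1}$ is a $C^1$ map fixing $0$ with derivative $A|_E$ there, of modulus $\mu < \rho$; hence $\norm{P_{\epsilon}^k(\xi)} = O((\mu')^k)$ with $\mu' \in (\mu,\rho)$ for $\xi \in M_1$ near $0$, so $\norm{P_{\epsilon}^k(\xi)}/\rho^k \to 0$ there, and since any $\xi$ with the bounded property has $\norm{P_{\epsilon}^k(\xi)} \le C\rho^k \to 0$, its forward orbit eventually enters that neighborhood and the convergence holds on all of $M_1$; the reverse implication is trivial. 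The same argument for $P_{\epsilon}^{-1}$ on $M_2$ (whose derivative at $0$, $A^{-1}|_W$, has norm at most $\nu_1^{-1}$ with $\nu_1 \in (\rho,\nu)$) settles \textup{(ii)}. The two points I expect to require the most care are, first, checking that $P_{\epsilon}^{-1}$ really is a small global $C^1$ perturbation of the linear map $A^{-1}$ (so that~\cite{Hirsch1977} may legitimately be invoked for it), and, second, this last passage from the ``non\nobreakdash-\hspace{0pt}escape at rate $\rho$'' description to genuine exponential decay at a strictly better rate --- precisely where the strict inequalities $\mu < \rho < \nu$ are used; both are routine once the splitting and rates above are fixed.
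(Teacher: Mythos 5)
Your proposal is correct and follows essentially the same route as the paper: the paper's entire proof of this proposition is a citation of Theorem~5.1 of the Hirsch--Pugh--Shub reference \cite{Hirsch1977}, applied to $P_{\epsilon}$ as a small global $C^1$ perturbation of $DP(0)$ with a spectral gap at $\rho$, which is exactly the framework you set up. Your additional care about $P_{\epsilon}^{-1}$ being a small perturbation of $DP(0)^{-1}$ and about upgrading ``bounded'' to ``tends to $0$'' fills in details the paper leaves implicit (for $M_2$ note only that the backward orbit need not converge to $0$ when $\nu<1$, so the decay of the ratio comes from the uniform estimate $\lVert P_{\epsilon}^{-k}(y)\rVert \le C\nu_1^{-k}$ along $M_2$ rather than from entering a neighborhood of the origin).
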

\begin{proof}
  See \cite[Thm.~5.1]{Hirsch1977}.
\end{proof}

We denote by $\mathcal{T} M_2$ the tangent bundle of $M_2$.   For each  $y \in M_2$, $\mathcal{T}_{y} M_2$ is the tangent space of $M_2$ at $y$. From now on, we assume that $\epsilon > 0$ in the construction of $P_{\epsilon}$ is so small that $v$ is transversal to $\mathcal{T}_{y} M_2$ at each $y \in M_2$.

\begin{lemma}
\label{lm:normally-attracting}
  For $P_{\epsilon}$ with $\epsilon > 0$ sufficiently small the manifold $M_2$ is {\em normally attracting\/}.  That is, there exists an invariant Whitney sum decomposition, $M_2 \times \mathbb{R}^n = \mathcal{T} M_2 \oplus E$ that satisfies  the following properties:
  \begin{itemize}
    \item
    There is $c_1 > 0$ such that $\lVert DP_{\epsilon}^k(y)|_{{E}_{y}} \rVert \le c_1 \rho^k$ for any $y \in M_2$ and any $k \in \mathbb{N}$.
    \item
    There is $c_2 > 0$ such that
    \begin{equation*}
      \frac{\lVert DP_{\epsilon}^k(y)|_{E_{y}} \rVert} {\minnorm(DP_{\epsilon}^k(y) |_{\mathcal{T}_{y}M_2})} \le c_2 \left( \frac{\rho}{\sigma} \right)^k
    \end{equation*}
    for any $y \in M_2$ and any $k \in \mathbb{N}$.
  \end{itemize}
In the previous statements,   for each  $y \in M_2$, $E_{y}$ stands for the fiber of $E$ over $y$.
\end{lemma}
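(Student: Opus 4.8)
The plan is to derive Lemma~\ref{lm:normally-attracting} as the normal\nobreakdash-\hspace{0pt}hyperbolicity part of the pseudo\nobreakdash-\hspace{0pt}unstable manifold construction of \cite{Hirsch1977}, made uniform by the global $C^1$\nobreakdash-\hspace{0pt}closeness of $P_\epsilon$ to its linear part. First I would replace the Euclidean inner product by an adapted (Lyapunov) one. Write $L := DP(0)$; since $\mu$ is a simple eigenvalue with eigenvector $v \gg 0$ and every other eigenvalue has modulus $\ge \nu$, the splitting $\RR^n = V_s \oplus V_u$ with $V_s := \spanned(v)$ and $V_u := W$ is $L$\nobreakdash-\hspace{0pt}invariant, and for any prescribed tolerance one can choose an inner product making $V_s \perp V_u$ and $\norm{L|_{V_s}} < \rho$, $\minnorm(L|_{V_u}) > \sigma$ (possible because $\mu < \rho$ and $\sigma < \nu$). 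Shrinking $\epsilon$ if necessary, I may assume that $\norm{DP_\epsilon(\xi) - L}$ is small on all of $\RR^n$ compared to the gaps $\rho - \norm{L|_{V_s}}$ and $\minnorm(L|_{V_u}) - \sigma$ --- which is exactly what Lemma~\ref{lm:P-modified} provides. From the construction underlying Proposition~\ref{prop:aux1} (i.e.\ \cite[Thm.~5.1]{Hirsch1977}) the tangent spaces $\mathcal{T}_y M_2$ are uniformly close to $V_u$ over all $y \in M_2$; in particular each $\mathcal{T}_y M_2$ is transversal to $\spanned(v)$, as already assumed before the statement.

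Next I would construct the complementary subbundle $E$ as the unique $DP_\epsilon$\nobreakdash-\hspace{0pt}invariant line bundle over $M_2$. Over each $y \in M_2$ I parameterize the lines transversal to $\mathcal{T}_y M_2$ that are close to $V_s$ by the small linear maps $s_y \colon V_s \to \mathcal{T}_y M_2$ whose graphs they are; since $DP_\epsilon$ is $C^1$\nobreakdash-\hspace{0pt}close to $L$ and $L$ respects $V_s \oplus V_u$, the derivative cocycle pushes such a line over $y$ to a line over $P_\epsilon(y)$ still transversal to $\mathcal{T}_{P_\epsilon(y)} M_2$, so $DP_\epsilon$ induces a bundle self\nobreakdash-\hspace{0pt}map $\mathcal{G}$ covering $P_\epsilon|_{M_2}$. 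A one\nobreakdash-\hspace{0pt}step estimate shows $\mathcal{G}$ is a fiber contraction with factor comparable to $\norm{L|_{V_s}} \cdot \minnorm(L|_{V_u})^{-1} < 1$ (the numerator controls how $DP_\epsilon$ moves the $V_s$\nobreakdash-\hspace{0pt}component of a line, the denominator how it stretches the $\mathcal{T}M_2$\nobreakdash-\hspace{0pt}component). By the standard contraction argument on the complete metric space of bounded continuous sections, $\mathcal{G}$ has a unique invariant section $y \mapsto E_y$, depending continuously on $y$; this is precisely the invariant Whitney sum $M_2 \times \RR^n = \mathcal{T}M_2 \oplus E$. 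I expect this step to be the main obstacle: correctly setting up the bundle of complements, checking that the fiber\nobreakdash-\hspace{0pt}contraction constant is uniform over the non-compact $M_2$, and extracting a continuous invariant section. All the uniformity required comes from $P_\epsilon$ being literally linear off a compact set and globally $C^1$\nobreakdash-\hspace{0pt}near $L$.

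Finally I would read off the rates. Because $E_y$ is uniformly close to $V_s$ and $DP_\epsilon$ is $\epsilon$\nobreakdash-\hspace{0pt}close to $L$, a single step in the adapted norm gives $\norm{DP_\epsilon(y)|_{E_y}} < \rho$ and $\minnorm(DP_\epsilon(y)|_{\mathcal{T}_y M_2}) > \sigma$ for every $y \in M_2$ (using $\minnorm(A+B)\ge\minnorm(A)-\norm{B}$ for the second one); iterating, by invariance of $E$ and of $\mathcal{T}M_2$, yields $\norm{DP_\epsilon^k(y)|_{E_y}} \le \rho^k$ and $\minnorm(DP_\epsilon^k(y)|_{\mathcal{T}_y M_2}) \ge \sigma^k$ in that norm. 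Converting back to the Euclidean norm costs only a fixed multiplicative constant, so $\norm{DP_\epsilon^k(y)|_{E_y}} \le c_1 \rho^k$, and on dividing $\norm{DP_\epsilon^k(y)|_{E_y}} / \minnorm(DP_\epsilon^k(y)|_{\mathcal{T}_y M_2}) \le c_2 (\rho/\sigma)^k$ with $c_2$ absorbing the two constants. As a shortcut one may instead simply invoke the normal\nobreakdash-\hspace{0pt}hyperbolicity conclusions attached to the pseudo\nobreakdash-\hspace{0pt}unstable manifold in \cite{Hirsch1977} directly, since $M_2$ is exactly that manifold for $P_\epsilon$ and the rates $\rho$ and $\sigma$ are inherited, up to constants, from the moduli $\mu$ and $\nu$ of the spectrum of $DP(0)$.
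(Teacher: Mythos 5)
Your overall architecture matches the paper's: pass to an adapted norm in the splitting $\spanned\{v\}\oplus W$, build the invariant line bundle $E$ by a contraction argument on a space of sections (graphs of small linear maps from the $v$\nobreakdash-\hspace{0pt}direction into a complement), and read the two rates off one\nobreakdash-\hspace{0pt}step block estimates, with uniformity coming from $P_\epsilon$ being globally $C^1$\nobreakdash-\hspace{0pt}close to $DP(0)$. The rate\nobreakdash-\hspace{0pt}extraction paragraph is fine.

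The gap is in the direction of the graph transform, which you yourself flag as the main step. You let the \emph{forward} derivative cocycle act on lines near $\spanned\{v\}$ written as graphs of $s_y\colon \spanned\{v\}\to \mathcal{T}_yM_2$, and claim the induced bundle map $\mathcal{G}$ is a fiber contraction with factor comparable to $\norm{L|_{V_s}}\cdot\minnorm(L|_{V_u})^{-1}\approx\mu/\nu$. It is not: writing $DP_{\epsilon}(y)\approx\bigl[\begin{smallmatrix} A & B \\ C & D \end{smallmatrix}\bigr]$ in the splitting, with $\norm{A}\approx\mu$ and $\minnorm(D)\approx\nu$, the image of the graph of $s$ is the graph of $s'=(C+Ds)(A+Bs)^{-1}$, so $\norm{s'}\approx\norm{D}\,\norm{s}/\minnorm(A)\approx(\nu/\mu)\norm{s}$: the deviation from $\spanned\{v\}$ is \emph{expanded} under forward iteration, by the reciprocal of your claimed factor. (This is as it should be: $E$ is the most contracted direction, hence attracting only for the \emph{inverse} induced action on lines.) As stated, your $\mathcal{G}$ has no bounded invariant section obtainable by iteration. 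The fix is exactly what the paper does: define the section by pulling back, i.e.\ solve
\begin{equation*}
  v+w(y)=\frac{DP_{\epsilon}^{-1}(P_{\epsilon}(y))\bigl(v+w(P_{\epsilon}(y))\bigr)}{\norm{\pi_1\,DP_{\epsilon}^{-1}(P_{\epsilon}(y))\bigl(v+w(P_{\epsilon}(y))\bigr)}_{*}},
\end{equation*}
whose associated operator $\mathcal{S}$ on the section space has Lipschitz constant roughly $\sigma^{-1}/\rho^{-1}=\rho/\sigma<1$ in the adapted norms, plus small corrections controlled by the off\nobreakdash-\hspace{0pt}diagonal blocks. Your fallback of quoting the normal\nobreakdash-\hspace{0pt}hyperbolicity conclusions of \cite{Hirsch1977} directly would be acceptable, but the paper deliberately runs this backward contraction by hand; with the direction of iteration corrected, your argument becomes essentially that proof.
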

\begin{proof}
We denote by $\pi_1$ the projection of $\mathbb{R}^n$ on $\spanned\{v\}$ along $W$, and by $\pi_2$ the projection of $\mathbb{R}^n$ on $W$ along $\spanned\{v\}$. We introduce a new norm, $\norm{\cdot}'$, on $\RR^n$ by putting
\begin{equation*}
  \norm{u}' := \norm{\pi_1 u}_{*} + \norm{\pi_2 u}_{**},
\end{equation*}
where $\norm{\cdot}_{*}$ is the norm on $\spanned\{v\}$ such that $\norm{v}_{*} = 1$ and $\norm{\cdot}_{**}$ is a norm on $W$ with the property that the operator norm $\norm{(DP_{\epsilon}(0)|_{W})^{-1}}_{**,**}< \sigma^{-1}$ (for the existence of such a norm, see~\cite[Prop.~2.8]{Hirsch1977}). We have employed the notation
\begin{equation*}
  \norm{(DP_{\epsilon}(0)|_{W})^{-1}}_{**,**} = \max\{\norm{(DP_{\epsilon}(0)|_{W})^{-1}(p)}_{**}: \norm{p}_{**} = 1\}.
\end{equation*}
The definitions of $\norm{\cdot}_{*,**}$,  $\norm{\cdot}_{**,*}$, and $\norm{\cdot}_{*,*}$ are analogous.

Next we construct an invariant subbundle $E$ as follows:  the fiber $E_{y}$ is given by $\spanned\{v + w(y)\}$ with  $w \colon M_2 \to W$ a continuous map that satisfies $w(0) = 0$ and
\begin{equation*}
   v + w(y) = \frac{DP_{\epsilon}^{-1}(P_{\epsilon}(y))(v + w(P_{\epsilon}(y)))}{\norm{\pi_1 \, DP_{\epsilon}^{-1}(P_{\epsilon}(y))(v + w(P_{\epsilon}(y)))}_{*}}, \quad \forall{\, y \in M_2}.
\end{equation*}
Let us prove the existence of $w$. We define
\begin{equation*}
  \mathcal{K} = \{z \colon M_2 \to W  \text{ continuous} : z(0) = 0\; \text{ and } \norm{z(y)}_{**} \le 1, \; \forall y \in M_2 \}
\end{equation*}
endowed with the metric
\begin{equation*}
  d(z_1, z_2) := \sup\{\, \norm{z_1(y) - z_2(y)}_{**}: y \in M_2 \, \}.
\end{equation*}
Notice that $(\mathcal{K}, d)$ is a complete metric space.  We prove that we can choose $\epsilon > 0$  so that the operator $\mathcal{S}$ defined on $\mathcal{K}$ by the formula
\begin{equation*}
  \mathcal{S}(z)(y) := \frac{DP_{\epsilon}^{-1}(P_{\epsilon}(y))(v + z(P_{\epsilon}(y)))}{\norm{\pi_1 \, DP_{\epsilon}^{-1}(P_{\epsilon}(y))(v + z(P_{\epsilon}(y)))}_{*}} - v, \quad y \in M_2,
\end{equation*}
maps $\mathcal{K}$ into itself and is a contraction. Thus the unique fixed point of $\mathcal{S}$ determines the function $w$. For each $y \in M_2$, we put
  \begin{equation*}
    \begin{aligned}
    A(y) &:= \pi_1 \, DP_{\epsilon}^{-1}(P_{\epsilon}(y))|_{\spanned\{v\}},
    \\
    B(y) &:= \pi_1 \, DP_{\epsilon}^{-1}(P_{\epsilon}(y))|_{W},
    \\
    C(y) &:= \pi_2 \, DP_{\epsilon}^{-1}(P_{\epsilon}(y))|_{\spanned\{v\}},
    \\
    D(y) &:= \pi_2 \, DP_{\epsilon}^{-1}(P_{\epsilon}(y))|_{W}.
    \end{aligned}
  \end{equation*}
In other words, the matrix  $DP_{\epsilon}^{-1}(P_{\epsilon}(y))$ in the decomposition $\RR^n = \spanned\{v\} \oplus W$ has the form
\begin{equation*}
  \begin{bmatrix}
    A(y) & B(y)
    \\
    C(y) & D(y)
  \end{bmatrix}.
\end{equation*}
For any $z \in \mathcal{K}$, we have
\begin{equation*}
  \begin{bmatrix}
    v
    \\
    \mathcal{S}(z)(y)
  \end{bmatrix}
  = \frac{1}{\norm{A(y)v + B(y)  z(P_{\epsilon}(y))}_{*}}
  \begin{bmatrix}
    A(y) & B(y)
    \\
    C(y) & D(y)
  \end{bmatrix}
  \begin{bmatrix}
    v
    \\
     z(P_{\epsilon}(y))
  \end{bmatrix},~
  y \in M_2.
\end{equation*}
As a consequence,
\begin{equation*}
  \begin{aligned}
  \norm{\mathcal{S}(z)(y)}_{**} & = \frac{\norm{C(y)v + D(y) z(P_{\epsilon}(y))}_{**}} {\norm{A(y)v + B(y)  z(P_{\epsilon}(y))}_{*}}
  \\
  & \le \frac{\norm{C(y)}_{*,**}  + \norm{D(y)}_{**,**}  \norm{z(P_{\epsilon}(y))}_{**}}
  {\norm{A(y)}_{*,*} - \norm{B(y)}_{**,*}  \norm{z(P_{\epsilon}(y))}_{**}}.
  \end{aligned}
\end{equation*}
We know that $\norm{A(0)}_{*,*} > \rho^{-1}$, $\norm{D(0)}_{**,**} < \sigma^{-1}$ and $\norm{B(0)}_{**,*} =  \norm{C(0)}_{*,**} = 0$.  Thus, for $\delta > 0$, one can take $\epsilon > 0$ in the construction of $P_{\epsilon}$ so that $\norm{A(y)}_{*,*} > \rho^{-1}$, $\norm{D(y)}_{**,**} < \sigma^{-1}$, $\norm{B(y)}_{**,*} < \delta$ and $\norm{C(y)}_{*,**} < \delta$ for all $y \in M_2$. This implies that
\begin{equation*}
  \norm{\mathcal{S}(z)(y)}_{**} \le \frac{\sigma^{-1}+ {\delta}}{\rho^{-1} - {\delta}}.
\end{equation*}
 Using that $\frac{\sigma^{-1}}{\rho^{-1}} < 1$, we obtain that for $\delta > 0$ small enough, the inequality
\begin{equation}\label{eq:inv-constr-1}
 \frac{\sigma^{-1}+ {\delta}}{\rho^{-1} - {\delta}} \le 1
\end{equation}
is satisfied.  Thus $\mathcal{S}$ maps $\mathcal{K}$ to $\mathcal{K}$.  Our next task is to study when $\mathcal{S}$ is a contraction depending on $\delta$.  For $z_1, z_2 \in \mathcal{K}$ and $y \in M_2$, we write
\begin{align*}
  &  \mathcal{S}(z_1)(y) - \mathcal{S}(z_2)(y) = \frac{D(y)( z_1(P_{\epsilon}(y)) -  z_2(P_{\epsilon}(y)))} {\norm{A(y)v + B(y)  z_1(P_{\epsilon}(y))}_{*}}
  \\
  {} & +
  \left( \frac{1}{\norm{A(y)v + B(y)  z_1(P_{\epsilon}(y))}_{*}} - \frac{1}{\norm{A(y)v + B(y)  z_2(P_{\epsilon}(y))}_{*}}
  \right) (C(y)v + D(y) z_2(P_{\epsilon}(y))).
\end{align*}
The $\norm{\cdot}_{**}$-norm of the first summand is bounded above by
\begin{equation*}
  \frac{\sigma^{-1}}{\rho^{-1} - \delta} \norm{z_1(P_{\epsilon}(y)) -  z_2(P_{\epsilon}(y))}_{**},
\end{equation*}
and the $\norm{\cdot}_{**}$-norm of the second summand is bounded above by
\begin{multline*}
  \frac{\abs{\norm{A(y)v + B(y)  z_2(P_{\epsilon}(y))}_{*} - \norm{A(y)v + B(y)  z_1(P_{\epsilon}(y))}_{*}}} {\norm{A(y)v + B(y) z_1(P_{\epsilon}(y))}_{*} \, \norm{A(y)v + B(y)  z_2(P_{\epsilon}(y))}_{*}} \norm{C(y) v + D(y)  z_2(P_{\epsilon}(y))}_{**}
  \\
  \le \frac{\norm{B(y)  (z_2(P_{\epsilon}(y)) -  z_1(P_{\epsilon}(y)))}_{*}}
  {\norm{A(y)v + B(y)  z_1(P_{\epsilon}(y))}_{*} \, \norm{A(y)v + B(y) z_2(P_{\epsilon}(y))}_{*}} \norm{C(y)v + D(y) z_2(P_{\epsilon}(y))}_{**}
  \\
  \le \frac{\delta (\delta + \sigma^{-1})}{(\rho^{-1} - \delta)^2} \norm{ z_1(P_{\epsilon}(y)) -  z_2(P_{\epsilon}(y))}_{**}.
\end{multline*}
We need to have
\begin{equation}
  \label{eq:inv-constr-2}
  \frac{\sigma^{-1}}{\rho^{-1} - \delta} + \delta \frac{\delta + \sigma^{-1}}{(\rho^{-1} - \delta)^2} < 1.
\end{equation}
 Finally we  choose $\delta>0$ sufficiently small to guarantee the inequalities \eqref{eq:inv-constr-1} and~\eqref{eq:inv-constr-2}.
\end{proof}

\begin{proposition}
\label{prop:aux2}
  There exists a foliation $\mathcal{L} = \{L_{y}\}_{y \in M_2}$ of $\mathbb{R}^n$ given by $C^1$ embedded one-dimensional manifolds $L_{y}$, such that the embeddings depend continuously on $y \in M_2$ in the $C^1$-topology. Moreover,  the following properties are satisfied.
  \begin{itemize}
    \item
    The foliation $\mathcal{L}$ is \emph{invariant}: for each $y \in M_2$, $P_{\epsilon}(L_{y}) = L_{P_{\epsilon}(y)}$.
    \item  For any $y \in M_2$, the tangent space of $L_{y}$ at $y$ is $ E_{y} $.
    \item
    For each $y \in M_2$, the leaf $L_{y}$ is characterized as the set of points $\xi \in \RR^n$ for which
      \begin{equation}
      \label{eq:foliation}
      \frac{\lVert P_{\epsilon}^k(\xi) - P_{\epsilon}^k(y) \rVert} {\minnorm(DP_{\epsilon}^k(y) |_{\mathcal{T}_{y}M_2})} \to 0 \quad \text{as } k \to \infty.
      \end{equation}
    \item
     For each $y \in M_2$, the leaf $L_{y}$ is characterized as the set of points $\xi \in \RR^n$ for which there exists $c = c(x, \xi) > 0$ such that
    \begin{equation}
      \label{eq:foliation-2}
      \lVert P_{\epsilon}^k(\xi) - P_{\epsilon}^k(y) \rVert \le c \rho^k
    \end{equation}
    for all $k \in \mathbb{N}$.
    \end{itemize}
    \end{proposition}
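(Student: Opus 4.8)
The plan is to realize $\mathcal{L}$ as the \emph{strong stable foliation} of the normally attracting manifold $M_2$, invoking the invariant foliation theorem for $C^1$ perturbations of linear automorphisms from Hirsch, Pugh and Shub \cite{Hirsch1977}. By Lemma~\ref{lm:normally-attracting} the manifold $M_2$ carries an invariant Whitney decomposition $M_2 \times \mathbb{R}^n = \mathcal{T}M_2 \oplus E$ in which $E$ is one-dimensional, $\lVert DP_{\epsilon}^k(y)|_{E_y} \rVert \le c_1 \rho^k$, and, crucially,
\begin{equation*}
  \frac{\lVert DP_{\epsilon}^k(y)|_{E_y} \rVert}{\minnorm(DP_{\epsilon}^k(y)|_{\mathcal{T}_yM_2})} \le c_2\Bigl(\frac{\rho}{\sigma}\Bigr)^k \to 0 \quad \text{as } k \to \infty,
\end{equation*}
i.e.\ $M_2$ is $1$\nobreakdash-\hspace{0pt}normally attracting, which is exactly the spectral gap under which the strong stable leaves are $C^1$ and depend $C^1$\nobreakdash-\hspace{0pt}continuously on the base point. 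Since, by Lemma~\ref{lm:P-modified}, $P_{\epsilon}$ coincides with the linear automorphism $DP(0)$ outside a ball and is a globally $C^1$\nobreakdash-\hspace{0pt}small perturbation of it, all these estimates are uniform on the (non-compact) manifold $M_2$, so the uniform version of the theory of \cite[\S\,5]{Hirsch1977} applies; outside a ball $M_2$ is a linear subspace and $\mathcal{L}$ will agree there with the linear foliation by translates of $\spanned\{v\}$.

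Concretely, I would produce the leaf $L_y$ as the fixed point of a graph transform over the orbit of $y$: in coordinates adapted to the splitting along $P_{\epsilon}^k(y)$, a leaf is the graph of a map from the $E$\nobreakdash-\hspace{0pt}direction into the complementary direction, and the operator sending such a family of graphs to the family obtained by pulling back through $P_{\epsilon}$ is a fibre contraction on a complete space of $C^0$ (respectively $C^1$) sections — the base being moved by the tangential dynamics and the fibre contracted at the rate $\rho/\sigma < 1$, precisely as in the construction of $w$ in the proof of Lemma~\ref{lm:normally-attracting}. Its unique fixed point yields $\{L_y\}_{y\in M_2}$, with the tangent space of $L_y$ at $y$ equal to $E_y$, with $P_{\epsilon}(L_y) = L_{P_{\epsilon}(y)}$ built in, and with leaves that are pairwise disjoint $C^1$ embedded curves whose union is all of $\mathbb{R}^n$ (the transform acts on a \emph{full} family of graphs based at all of $M_2$, so each point of $\mathbb{R}^n$ lies on exactly one leaf).

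It then remains to identify the leaves dynamically, namely to establish \eqref{eq:foliation} and \eqref{eq:foliation-2}. For the implication ``$\xi \in L_y \Rightarrow$ \eqref{eq:foliation-2}'' I would use the standard cone estimate: $P_{\epsilon}^k(\xi) - P_{\epsilon}^k(y)$ stays in a narrow cone about $E$ along the orbit and is therefore contracted at the rate of $E$, giving $\lVert P_{\epsilon}^k(\xi) - P_{\epsilon}^k(y) \rVert \le c\rho^k$. Dividing by $\minnorm(DP_{\epsilon}^k(y)|_{\mathcal{T}_yM_2}) \ge c_3 \sigma^k$ — a lower bound which follows from $\sigma < \nu$ together with the $C^1$\nobreakdash-\hspace{0pt}closeness of $P_{\epsilon}$ to $DP(0)$ — and using $\rho/\sigma < 1$ yields \eqref{eq:foliation}. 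For the converse, if $\xi$ satisfies \eqref{eq:foliation} (or merely \eqref{eq:foliation-2}), then since $\mathcal{L}$ is a foliation we have $\xi \in L_{y'}$ for a unique $y' \in M_2$; if $y' \ne y$, the two points $y, y' \in M_2$ are separated by the tangential dynamics, so $\lVert P_{\epsilon}^k(y) - P_{\epsilon}^k(y') \rVert$ does not decay faster than $\minnorm(DP_{\epsilon}^k(y)|_{\mathcal{T}_yM_2})$, while $\lVert P_{\epsilon}^k(\xi) - P_{\epsilon}^k(y') \rVert = o(\minnorm(DP_{\epsilon}^k(y)|_{\mathcal{T}_yM_2}))$ by the first implication applied to $L_{y'}$; the triangle inequality then contradicts \eqref{eq:foliation}, forcing $y' = y$. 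The equivalence of \eqref{eq:foliation} and \eqref{eq:foliation-2} drops out of the two-sided bounds $c_3 \sigma^k \le \minnorm(DP_{\epsilon}^k(y)|_{\mathcal{T}_yM_2})$ and $\lVert DP_{\epsilon}^k(y)|_{E_y} \rVert \le c_1 \rho^k$.

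The step I expect to be the main obstacle is not the fixed-point construction itself — that is a one-dimensional analogue of the fibre contraction already carried out for Lemma~\ref{lm:normally-attracting} — but rather (i) neutralizing the non-compactness of $M_2$, which is exactly what the global $C^1$\nobreakdash-\hspace{0pt}smallness and eventual linearity of $P_{\epsilon}$ from Lemma~\ref{lm:P-modified} are for, and (ii) upgrading the leaves from merely Lipschitz to $C^1$ with $C^1$\nobreakdash-\hspace{0pt}continuous dependence on $y$, which is precisely the point at which the sharp domination estimate $c_2(\rho/\sigma)^k$ of Lemma~\ref{lm:normally-attracting}, rather than a bare ``$E$ is contracting'' statement, becomes indispensable.
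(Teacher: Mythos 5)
Your proposal is correct and follows essentially the same route as the paper: the paper's entire proof is a citation of Theorem~5.5 and Corollary~5.6 of \cite{Hirsch1977}, whose hypotheses are exactly the normal attraction established in Lemma~\ref{lm:normally-attracting} together with the global $C^1$-smallness of $P_{\epsilon}$ from Lemma~\ref{lm:P-modified}. Your graph-transform construction and the dynamical identification of the leaves are a faithful sketch of what that cited theorem provides, so you have simply unpacked the reference rather than taken a different path.
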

\begin{proof}
  See Theorem~5.5 and Corollary 5.6 in \cite{Hirsch1977}.
\end{proof}

In particular, it follows from the characterization given in~\eqref{eq:foliation-2} that $L_{0} = M_1$. \newline
Now we have all the ingredients to prove the main result of this section.

\medskip

\begin{proof}[Proof of Theorem~\ref{thm:foliation}]
 The $C^1$ embeddings of open intervals that define the foliation $\mathcal{L}$ depend continuously on $y \in M_2$. Therefore, we can write
\begin{equation*}
    L_y = \{\, E(y, s): s \in (s_y^{\mathrm{min}}, s_y^{\mathrm{max}}) \,\}, \quad y \in M_2,
\end{equation*}
where
\begin{equation*}
    E \colon \bigcup\limits_{y \in M_2} \{y\} \times (s_y^{\mathrm{min}}, s_y^{\mathrm{max}}) \to \RR^n
\end{equation*}
is a $C^1$ embedding, with
\begin{gather*}
  M_2 \ni y \mapsto s_y^{\mathrm{min}} \in (-\infty, 0) \\
\intertext{and}
  M_2 \ni y \mapsto s_y^{\mathrm{max}} \in (0, \infty)
\end{gather*}
continuous functions. These maps have the following properties:
\begin{itemize}
    \item For each $y \in M_2$, $E(y, 0) = y$.
    \item For each $y \in M_2$ and $s \in (s_y^{\mathrm{min}}, s_y^{\mathrm{max}})$,
    \begin{equation*}
          \left\lVert \frac{\partial E}{\partial s}(y,s) \right\rVert = 1.
    \end{equation*}
\end{itemize}
We say that $U$ is a \emph{nice neighborhood} of $0$ if $U = E(Z \times (-\delta, \delta))$, where $Z \subset M_2$ is an open disk containing $0$ and $\delta > 0$.  We always assume that the embedding $E$ can be extended to an embedding $\overline{E}$ of (the manifold-with-corners) $\overline{Z} \times [-\delta, \delta]$  where the closure $\overline{Z}$ is a closed disk contained in $M_2$. Moreover $\overline{E}(\overline{Z} \times [-\delta, \delta]) = \overline{U}$.  Such a $\overline{U}$ will be called a \emph{closed nice neighborhood} of $0$. Notice that we can find a neighborhood base of $\RR^n$ at $0$ consisting of nice neighborhoods.

Let $U_{(1)}$ be a nice neighborhood of $0$ such that $\overline{U_{(1)}} \subset V$. The next facts  follow from Propositions~\ref{prop:aux1} and~\ref{prop:aux2} and Lemma~\ref{lm:P-modified}:

\begin{itemize}
  \item $M_1 \cap U_{(1)}$ and $M_2 \cap U_{(1)}$ are locally invariant.
  \item $M_1 \cap U_{(1)}$ is tangent at $0$ to $v$.
  \item $M_2 \cap U_{(1)}$ is tangent at $0$ to $W$.
  \item For any $y \in M_2 \cap U_{(1)}$ and any $\xi \in L_y$, if $P(y), P(\xi) \in U_{(1)}$, then
      \begin{equation*}
        P(\xi) \in L_{P(y)} \cap U_{(1)}.
      \end{equation*}
      Analogously,  if $P^{-1}(y), P^{-1}(\xi) \in U_{(1)}$, then
      \begin{equation*}
        P^{-1}(\xi) \in L_{P^{-1}(y)} \cap U_{(1)}.
      \end{equation*}
\end{itemize}

In the sequel, we frequently make neighborhoods smaller.  In order not to overburden the exposition with notation, we write $M_1$, $M_2$, $L_y$, etc., instead~of $M_1 \cap U_{(1)}$, $M_2 \cap U_{(1)}$, $L_y \cap U_{(1)}$, etc.

\medskip
For $\xi \in \overline{U}$ with $\overline{U}$ a closed nice neighborhood  of $0$, we write
  \begin{equation*}
    v(\xi) := \frac{\partial E}{\partial s}(y,s),
  \end{equation*}
where $s \in [-\delta, \delta]$ and $y \in \overline{Z}$ are chosen so that $\xi = \overline{E}(y, s)$.  Using that $\overline{E}$ is a homeomorphism onto its image, $v(\xi)$ is well defined. Furthermore,  we have that $v(0) = v/\norm{v}$.

By taking a nice neighborhood $U_{(2)} \subset U_{(1)}$, we can assume that $v(\xi) \gg 0$ for all $\xi \in U_{(2)}$.
For $\xi \in U_{(2)}$, we put
  \begin{equation*}
    \begin{aligned}
    A(\xi) &:= \pi_1 \circ DP(\xi)|_{\spanned\{v\}},
    \\
    B(\xi) &:= \pi_1 \circ DP(\xi)|_{W},
    \\
    C(\xi) &:= \pi_2 \circ DP(\xi)|_{\spanned\{v\}},
    \\
    D(\xi) &:= \pi_2 \circ DP(\xi)|_{W}.
    \end{aligned}
  \end{equation*}
We have, for any nonzero $u \in \mathbb{R}^n$,
  \begin{gather*}
    \lVert DP(\xi) u \rVert \le \Bigl(\bigl(\lVert A(\xi) \rVert + \lVert C(\xi) \rVert \bigr) \tfrac{\lVert \pi_1 u \rVert}{\lVert u \rVert} + \bigl(\lVert B(\xi) \rVert + \lVert D(\xi) \rVert\bigr)\tfrac{\lVert \pi_2 u \rVert}{ \lVert u \rVert} \Bigr) \lVert u \rVert
    \\
    \le \Bigl(\bigl(\lVert A(\xi) \rVert + \lVert C(\xi) \rVert \bigr) \bigl(1 + \tfrac{\lVert \pi_2 u \rVert}{\lVert u \rVert} \bigr) + \bigl(\lVert B(\xi) \rVert + \lVert D(\xi) \rVert\bigr)\tfrac{\lVert \pi_2 u \rVert}{ \lVert u \rVert} \Bigr) \lVert  u \rVert.
  \end{gather*}
We know that $\lVert A(0) \rVert < \rho$ and $\lVert B(0) \rVert = \lVert C(0) \rVert = 0$. Hence, we can find  a nice neighborhood $U_{(3)} \subset U_{(2)}$ and a constant $\kappa > 0$ with the following property: for any $\xi \in U_{(3)}$ and any nonzero $u \in \mathbb{R}^n$ with $\lVert \pi_2 u \rVert/\lVert u \rVert \le \kappa$, we have that
  \begin{equation}
  \label{eq:contr}
    \lVert DP(\xi) u \rVert < \rho \lVert u \rVert.
  \end{equation}
Next we take a convex neighborhood $U_{(4)} \subset U_{(3)}$ so that $\lVert \pi_2 v(\xi) \rVert/\lVert v(\xi) \rVert \le \kappa$
 for all $\xi \in U_{(4)}$. This can be done because $\lVert \pi_2 v(0) \rVert/\lVert v(0) \rVert = 0$ and $v(\xi)$ depends continuously on $\xi$. Now we claim that
 \begin{equation*}
   \lVert \pi_2 ({\xi'} - {\xi''}) \rVert/\lVert {\xi'} - {\xi''} \rVert \le \kappa
 \end{equation*}
 for any $y \in M_2 \cap U_{(4)}$ and any $\xi', \xi'' \in L_y \cap U_{(4)}$, $\xi' \ne \xi''$.
Indeed, assume for definiteness' sake that $\xi' = E(y,s')$ and $\xi'' = E(y,s'')$ for some $s' > s''$.  Then
   \begin{equation*}
   \xi'  - \xi'' = \int\limits_{s''}^{s'} v(E(y,\tau)) \, d\tau.
  \end{equation*}
We have $v(E(y,\tau)) \gg 0$, and hence $\pi_1 v(E(y,\tau)) = \alpha(\tau) v$ for some $\alpha(\tau) > 0$.  As
\begin{equation*}
  \lVert \pi_1 v(E(y,\tau)) \rVert \ge \lVert v(E(y,\tau)) \rVert - \lVert \pi_2 v(E(y,\tau)) \rVert  \null \ge   (1-\kappa)v(E(y,\tau)),
\end{equation*}
 one has $\alpha(\tau) \ge 1 - \kappa$, for all $\tau \in [{s'', s'}]$.  Consequently,
  \begin{equation*}
    \pi_1 ({\xi'}  - {\xi''}) = \int\limits_{{s''}}^{{s'}} \pi_1 v(E(y,\tau)) \, d\tau = \Bigl( \int\limits_{{s''}}^{{s'}} \alpha(\tau) \, d\tau \Bigr) v \ge (1 - \kappa) {(s' - s'')} v.
  \end{equation*}
Since $\norm{{\xi'}  - {\xi''}} \le {s' - s''}$, we deduce that $\norm{\pi_1 ({\xi'}  - {\xi''})}/\norm{{\xi'}  - {\xi''}} \ge 1 - \kappa$.  Therefore
\begin{equation*}
 \norm{\pi_2 ({\xi'}  - {\xi''})}/\norm{{\xi'}  - {\xi''}} \le \kappa.
\end{equation*}
As
  \begin{equation*}
    P(\xi') - P(\xi'') = \int\limits_{0}^{1} DP(\xi'' + \tau ({\xi'}  - {\xi''})) ({\xi'}  - {\xi''}) \, d\tau
  \end{equation*}
  for $y \in M_2 \cap U_{(4)}$ and $\xi', \xi'' \in L_y \cap U_{(4)}$ with $P(\xi'), P(\xi'')  \in U_{(4)}$, the above equality  and~\eqref{eq:contr} imply that
  \begin{equation*}
    \norm{{P(\xi')} - {P(\xi'')}} \le \int\limits_{0}^{1} \norm{DP(\xi'' + \tau ({\xi'}  - {\xi''})) ({\xi'}  - {\xi''})} \, d\tau \leq \rho \norm{{\xi'}  - {\xi''}}.
  \end{equation*}
By taking a possibly smaller nice neighborhood $U_{(5)} \subset U_{ (4)}$, we can assume that
  \begin{equation*}
    \norm{E(y,\delta) - y} \in (\rho \delta, \delta] \quad \text{and} \quad \norm{E(y,-\delta) - y} \in (\rho \delta, \delta]
  \end{equation*}
for all $y \in M_2 \cap U_{(5)}$.  Thus, if $y \in M_2 \cap U_{(5)}$ with $P(y) \in M_2 \cap U_{(5)}$, we have that $P(L_y) \subset L_{P(y)}$.

It remains to prove \eqref{eq:M2}. Noticing that the spectral radius of the restriction $DP^{-1}(0)|_W$ is $\nu^{-1}$, we deduce
that there is $l \in \mathbb{N}$ such that $\norm{DP^{-l}(0)|_W} < \sigma^{-l}$. In a manner similar to that used before, we can prove that there exists a neighborhood $U_{(6)} \subset U_{(5)}$ such that
\begin{equation*}
  \norm{DP^{-l}(y) u} \leq \sigma^{-l} \norm{u}
\end{equation*}
 for any $y \in U_{(6)}$ and any $u \in \mathbb{R}^n$ whose direction is sufficiently close to $W$. Now we can take a convex neighborhood $U_{(7)}\subset U_{(6)}$ that satisfies
  \begin{equation*}
    \norm{P^{-l}(y') - P^{-l}(y'')} \le \int\limits_{0}^{1} \norm{DP^{-l}(y'' + \tau (y'  - y'')) (y'  - y'')} \, d\tau \leq \sigma^{-l} \norm{y'  - y''}
  \end{equation*}
  for any $y', y'' \in M_2 \cap U_{(7)}$.
Let $U = U_{(7)}$, which is the desired neighborhood. Thus, we have completed the proof.
\end{proof}

It should be remarked that, in view of the characterization given in Propositions~\ref{prop:aux1} and~\ref{prop:aux2}, a one\nobreakdash-\hspace{0pt}dimensional manifold $M_1$ is unique.  In~particular, it does not depend on the choice of $\rho \in (\mu, \min \{1, \nu\})$ or of the extension $P_{\epsilon}$.  On the other hand, a one\nobreakdash-\hspace{0pt}codimensional manifold $M_2$ depends, in~general, on $\rho$ and the extension $P_{\epsilon}$.  For conditions guaranteeing the uniqueness of $M_2$ see Section~\ref{inv-maniflod}.

\begin{definition}
We say that $M_1$ and $M_{2}$ as defined in Theorem~\ref{thm:foliation} are the $\rho$\nobreakdash-\hspace{0pt}pseudo locally stable manifold and a $\rho$\nobreakdash-\hspace{0pt}pseudo locally unstable manifold respectively.  If $\nu = 1$, then $M_2$ is a \textup{(}local\textup{)} center-unstable manifold at $q$. If $\nu > 1$, then $M_2$ is the \textup{(}local\textup{)} unstable manifold at $q$.
\end{definition}

\section{Linearization, invariant manifolds and tangent cones on the carrying simplex}\label{inv-maniflod}
In this section, we assume, without further mention, that $T \colon \mathbb{R}^{n}_{+} \to \mathbb{R}^{n}_{+}$ is a map of class $C^{1}$ that admits a carrying simplex $S$. Moreover, $T$ has a fixed point $q \in \inte{\mathbb{R}^{n}_{+}}$ so that all the entries of $(DT(q))^{-1}$ are strictly positive and the eigenvalue of $DT(q)$ with the smallest modulus $\mu$ satisfies $0 < \mu < 1$ (condition (C1) in Section \ref{sec:sec-inv-fol}).  We recall that a map $T \colon \mathbb{R}^{n}_{+} \to \mathbb{R}^{n}_{+} $ is of class $C^k$  if there are an open set $ \tilde{U} \subset \mathbb{R}^{n}$ with $\mathbb{R}^{n}_{+} \subset \tilde{U}$, and a $C^k$ map $\tilde{T} \colon \tilde{U} \to \mathbb{R}^{n}$ so that  $\tilde{T}|_{\mathbb{R}^{n}_{+}} = T$.

\subsection{Linearization and invariant manifolds}
The first main result of this section guarantees the conjugacy between the restriction of the map to the carrying simplex in a neighborhood of an interior fixed point and the restriction to its pseudo-unstable manifold.
\begin{theorem}\label{tconjugacy}
  There exist $U$ a neighborhood of $q$ and a homeomorphism $R \colon S\cap U\to M_2$ so that
 \begin{equation}\label{equ:conjugacy}
(T|_{M_2}) \circ R = R \circ (T|_{S\cap U}),
\end{equation}
where $M_{2}$ is defined in Theorem \ref{thm:foliation}.
\end{theorem}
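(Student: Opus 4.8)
The plan is to realize $R$ as the projection along the leaves of the foliation $\mathcal{L}$ supplied by Theorem~\ref{thm:foliation}, restricted to the carrying simplex. Under the standing hypotheses of this section, condition \textup{(C1)} holds for the local $C^1$ diffeomorphism $P:=\tilde T$ on a neighbourhood $V\subset\inte{\mathbb{R}^n_+}$ of $q$, so Theorem~\ref{thm:foliation} applies and produces a neighbourhood $U$ of $q$, the codimension-one $C^1$ manifold $M_2\subset U$, and the locally invariant foliation $\mathcal L=\{L_y\}_{y\in M_2}$ of $U$ by $C^1$ arcs, with $U=\overline E(\overline Z\times[-\delta,\delta])$ and the continuous leaf projection $\Pi\colon U\to Z\subset M_2$, $\Pi(E(y,s))=y$. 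Recall also that $q\in S$ (from \textup{(H3)} and compactness of $S$) and that, since $q\in\inte{\mathbb{R}^n_+}$, by \textup{(H2)} the set $S$ is, near $q$, a topological manifold of dimension $n-1$ (shrinking $U$ if necessary so that $S\cap U$ contains no boundary point of $S$).

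First I would prove that \emph{every leaf meets $S$ in at most one point}. As established in the proof of Theorem~\ref{thm:foliation}, $\partial E/\partial s=v(\cdot)\gg 0$ on $U$, so for a fixed $y$ and $s'>s''$ one has $E(y,s')-E(y,s'')=\int_{s''}^{s'}v(E(y,\tau))\,d\tau\gg 0$; hence any two distinct points of a leaf $L_y$ are related by the relation $\ll$, and \textup{(H1)} forces $\#(L_y\cap S)\le 1$. In particular $\Pi|_{S\cap U}$ is injective.

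Next I would prove \emph{surjectivity onto a neighbourhood of $q$ in $M_2$}. The cleanest route is invariance of domain: $\Pi|_{S\cap U}\colon S\cap U\to Z$ is a continuous injection between topological manifolds of the common dimension $n-1$, hence an open map; thus $\Pi(S\cap U)$ is open in $M_2$ and contains $\Pi(q)=q$, and $\Pi|_{S\cap U}$ is a homeomorphism onto this open set. (A more hands-on alternative uses \textup{(H5)}: for $z$ near $q$ one checks, using that $q\in S$ and that $S$ is unordered, that $z\gg q$ implies $z\notin\Gamma$ while $z\ll q$ implies $z\in\Gamma\setminus S$; since $E(q,-\delta)\ll q$ and $E(q,\delta)\gg q$, for $y$ close to $q$ the connected closed leaf $\overline E(\{y\}\times[-\delta,\delta])$ joins a point of the open set $\inte_{\mathbb{R}^n_+}\Gamma$ to a point of the open set $\mathbb{R}^n_+\setminus\Gamma$, hence must cross $S=\partial\Gamma$.) Shrinking $U$, I may then assume each leaf over $Z$ meets $S$ in exactly one point, and I set $R:=\Pi|_{S\cap U}\colon S\cap U\to M_2$, a homeomorphism onto (the local piece) $M_2$, whose inverse $y\mapsto L_y\cap S$ is continuous.

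Finally I would verify the conjugacy \eqref{equ:conjugacy}. After a further shrinking of $U$ (using $T(q)=q$ and continuity of $T$) I may assume that for every $\xi\in S\cap U$ with $T\xi\in U$ one also has $R(\xi)\in U$ and $T(R(\xi))\in U$. Fix such a $\xi$ and set $y:=R(\xi)\in M_2$, so $\xi\in L_y$. Since $Ty\in U$, local invariance of $M_2$ gives $Ty\in M_2$, and local invariance of $\mathcal L$ (Theorem~\ref{thm:foliation}(c)) gives $T\xi\in L_{Ty}$; on the other hand $T\xi\in S$ by \textup{(H4)} and $T\xi\in U$, so $T\xi\in L_{Ty}\cap S\cap U$, which by the uniqueness just proved is the single point $R^{-1}(Ty)$. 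Hence $R(T\xi)=Ty=(T|_{M_2})(R\xi)$, which is \eqref{equ:conjugacy} (as usual in such local statements, $T|_{M_2}$ and the identity are understood on the set where both sides are defined). Together with the continuity of $R$ and $R^{-1}$ this finishes the proof. I expect the surjectivity step to be the main obstacle: it is the only place where the global properties \textup{(H1)}--\textup{(H5)} of the carrying simplex, rather than merely the local foliation, are needed, whereas the remaining steps are bookkeeping with the local invariance of $M_2$ and $\mathcal L$.
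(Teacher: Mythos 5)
Your proposal is correct and follows essentially the same route as the paper: define $R$ as the leaf projection $\Pi$ restricted to $S\cap U$, get injectivity from the fact that each leaf is totally ordered by $\ll$ while $S$ is unordered by \textup{(H1)}, and derive the conjugacy from the local invariance of $M_2$ and of the foliation. The only difference is that you additionally justify surjectivity onto a neighbourhood of $q$ in $M_2$ (via invariance of domain, or via leaves crossing $\partial\Gamma$), a point the paper's proof passes over by simply declaring $R$ a homeomorphism onto its image and relying on compactness of $S\cap\overline{U}$; your extra care here is a genuine improvement rather than a deviation.
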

 \begin{proof}
 Take $\overline{U}$ a closed nice neighborhood of $q$ given in Theorem \ref{thm:foliation}. Denote by $\Pi$ the map that assigns to $\xi \in \overline{U}$ the point $y \in M_2$ such that $\xi \in L_{y}$. We observe that  $\Pi$ is a continuous retraction. Now we define
 \begin{equation*}
   R := \Pi|_{S \cap \overline{U}}.
 \end{equation*}
 Since, by (H1), no two points in $S$ are ordered by $<$, $R$ is an injective map of the compact metric space $S \cap \overline{U}$, hence is  a homeomorphism onto its domain.  Moreover, by Theorem~\ref{thm:foliation}(c),
\begin{equation}\label{equ:conjugacy1}
(T|_{M_2}) \circ R = R \circ (T|_{S\cap U}).
\end{equation}
That is, $R$ provides a conjugacy between $T|_{M_2}$ and $T|_{S\cap U}$.
 \end{proof}

From now on, we fix the neighborhood $U$ of $q$ and the homeomorphism $R \colon S\cap U\to M_2$ in Theorem \ref{tconjugacy}, such that
\begin{equation*}
  (T|_{M_2}) \circ R = R \circ (T|_{S\cap U}).
\end{equation*}
The next result is crucial to understanding the unstable manifold on the carrying simplex.
\begin{theorem} \label{thm:smooth-unstable}  Suppose   that there exists a locally invariant $C^1$ submanifold $M' \subset M_2$ so that one of the following conditions is satisfied:
  \begin{enumerate}
    \item[\textup{(i)}] there is a neighborhood base, $\mathcal{V}$, of $q$ in $M'$ so that $T^{-1}(V) \subset V$  for every $V \in \mathcal{V}$,
  \end{enumerate}
  or
  \begin{enumerate}
    \item[\textup{(ii)}] there is a neighborhood base, $\mathcal{V}$, of $q$ in $ R^{-1}(M')$ so that   $T^{-1}(V) \subset V$ for every $V \in \mathcal{V}$.
  \end{enumerate}
  Then, there is a neighborhood $U' \subset U$ of $q$ such that $M' \cap U' \subset S$ \textup{(}$U$ is given in the previous theorem\textup{)}.
\end{theorem}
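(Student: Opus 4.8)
The plan is to show that every $x\in M'$ sufficiently close to $q$ coincides with the unique point $z:=R^{-1}(x)\in S\cap U$ lying on the same leaf $L_x$; since $x\in S$ is equivalent to $x=R^{-1}(x)$ (indeed $R=\Pi|_{S\cap U}$ and $\Pi(x)=x$ whenever $x\in M_2$), this yields $M'\cap U'\subset S$. Note that $z$ is well defined and unique because $R\colon S\cap U\to M_2$ is a homeomorphism by Theorem~\ref{tconjugacy}, and $R(z)=\Pi(z)=x$ means precisely $z\in L_x$.

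First I would fix the neighborhoods. Under hypothesis~(i), pick $V\in\mathcal{V}$ with $T^{-1}(V)\subset V$ and $V\subset U$ bounded, and choose $U'\subset U$ with $M'\cap U'\subset V$. For $x\in M'\cap U'$ the backward orbit $\{T^{-m}(x)\}_{m\ge0}$ (local inverse of $T$ near $q$) stays in $V\subset U$, hence in $M'$ (local invariance of $M'$) and in $M_2$ (local invariance of $M_2$). Since the conjugacy of Theorem~\ref{tconjugacy} together with the local injectivity of $T$ gives $R^{-1}\circ T^{-1}=T^{-1}\circ R^{-1}$ as long as the iterates stay in $U$, the orbit $T^{-m}(z)=R^{-1}(T^{-m}(x))$ stays in the bounded set $S\cap U$. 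Under hypothesis~(ii) one argues identically with $M'$ replaced by $R^{-1}(M')$: then it is the orbit of $z$ that is trapped in $V$ and the orbit of $x=R(z)$ that is kept in $M_2\subset U$ via $R$.

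Next I would transport the leaf structure backward. By the local invariance of the foliation in Theorem~\ref{thm:foliation}(c) we have $\Pi\circ T^{-1}=T^{-1}\circ\Pi$ on the relevant part of $U$; since $\Pi(x)=\Pi(z)=x$, an induction shows $\Pi(T^{-m}(x))=\Pi(T^{-m}(z))=T^{-m}(x)$, i.e.\ $T^{-m}(x)$ and $T^{-m}(z)$ lie on the common leaf $L_{T^{-m}(x)}$ for all $m\ge0$. Applying the leaf-contraction inequality of Theorem~\ref{thm:foliation}(c) at each step with $y=T^{-m}(x)\in M_2$ (so that $T(y)=T^{-(m-1)}(x)\in M_2$), $\xi'=T^{-m}(x)$, $\xi''=T^{-m}(z)$, we get $\norm{T^{-(m-1)}(x)-T^{-(m-1)}(z)}\le\rho\,\norm{T^{-m}(x)-T^{-m}(z)}$, and hence, telescoping, $\norm{T^{-m}(x)-T^{-m}(z)}\ge\rho^{-m}\norm{x-z}$ for every $m\ge0$. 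If $x\ne z$ the right-hand side tends to $+\infty$, contradicting the boundedness of the two backward orbits established above. Therefore $x=z\in S$.

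The delicate point is not the geometric expansion but the bookkeeping of neighborhoods: one must choose $U'$ (and the backward-invariant $V$) small enough that, for all $m\ge0$, the backward iterates of both $x$ and $z=R^{-1}(x)$ stay in the neighborhood on which the foliation, the retraction $\Pi$, the intertwining relations $\Pi\circ T^{\pm1}=T^{\pm1}\circ\Pi$, the leaf-contraction estimate, and the conjugacy $R$ of Theorem~\ref{tconjugacy} are simultaneously available, so that the identity $T^{-m}(z)=R^{-1}(T^{-m}(x))$ is legitimate throughout. This is exactly what the trapping hypotheses provide through $T^{-1}(V)\subset V$; and since the homeomorphism $R$ exchanges hypotheses (i) and (ii), writing out one of the two cases suffices.
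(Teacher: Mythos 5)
Your proof is correct and follows essentially the same route as the paper: trap the backward orbits of $x\in M'$ and of $z=R^{-1}(x)$ near $q$ using $T^{-1}(V)\subset V$, note that the two points stay on a common leaf by the invariance of the foliation, and let the forward $\rho$-contraction along leaves (Theorem~\ref{thm:foliation}(c)) force $x=z\in S$. The only cosmetic difference is that you bound $\norm{T^{-m}(x)-T^{-m}(z)}$ by the boundedness of the two trapped orbits, where the paper invokes Lemma~\ref{lm:auxp} for the same purpose; both suffice.
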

\noindent To prove this theorem, we need the next result:
\begin{lemma} \label{lm:auxp}
  There exists a constant $\Theta > 0$ such that
  \begin{equation*}
    \norm{\xi - \Pi(\xi)} \le \Theta \norm{q - \Pi(\xi)}
  \end{equation*}
   for all $\xi \in S \cap U$.
\end{lemma}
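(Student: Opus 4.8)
The plan is to argue by contradiction, using the compactness of $S$ together with two structural facts furnished by the construction in Section~\ref{sec:sec-inv-fol}: near $q$ every leaf of $\mathcal{L}$ is a short $C^{1}$ arc whose unit tangent field $v(\cdot)=\partial E/\partial s$ is uniformly close to $v/\norm{v}$, where $v\gg 0$ is the Perron--Frobenius eigenvector (recall $v(q)=v/\norm{v}$ and that leaves are chains for $\ll$ with tangents $\gg 0$), while by \textup{(H1)} no two distinct points of $S$ are related by $<$. Thus the antichain property \textup{(H1)} will play the role that transversality of $S$ to the foliation $\mathcal{L}$ would play if $S$ were known to be smooth at $q$.

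Suppose the conclusion fails. Then there are $\xi_{k}\in S\cap U$ with $y_{k}:=\Pi(\xi_{k})$ and $\norm{\xi_{k}-y_{k}}>k\,\norm{q-y_{k}}$ for every $k$; in particular $\xi_{k}\neq q$ and $\xi_{k}\neq y_{k}$. All the $\xi_{k},y_{k}$ lie in the bounded closed nice neighbourhood $\overline{U}$ used in the proof of Theorem~\ref{tconjugacy}, so the left-hand sides stay bounded and hence $\norm{q-y_{k}}\to 0$. Since $\Pi|_{S\cap\overline{U}}$ is a homeomorphism onto its image with $\Pi(q)=q$ (Theorem~\ref{tconjugacy} and its proof), its inverse is continuous, so $\xi_{k}=(\Pi|_{S\cap\overline{U}})^{-1}(y_{k})\to q$. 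Writing $\xi_{k}=E(y_{k},s_{k})$, continuity of $E^{-1}$ at $q$ then also gives $y_{k}\to q$ and $s_{k}\to 0$.

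After passing to a subsequence assume $s_{k}>0$ (the case $s_{k}<0$ is symmetric). From $\xi_{k}-y_{k}=\int_{0}^{s_{k}}v(E(y_{k},\tau))\,d\tau$ and $\norm{v(\cdot)}\equiv 1$ we get $\norm{\xi_{k}-y_{k}}\le s_{k}$ and $\norm{E(y_{k},\tau)-y_{k}}\le\tau\le s_{k}$, so the arc $\{E(y_{k},\tau):0\le\tau\le s_{k}\}$ shrinks uniformly to $q$; by uniform continuity of $v(\cdot)$ on $\overline{U}$ it follows that $v(E(y_{k},\tau))\to v/\norm{v}$ uniformly in $\tau$. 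Decomposing along $\RR^{n}=\spanned\{v\}\oplus W$ and writing $\pi_{1}(\xi_{k}-y_{k})=\beta_{k}v$, $r_{k}:=\pi_{2}(\xi_{k}-y_{k})\in W$, we obtain $\beta_{k}>0$, $\beta_{k}$ comparable to $s_{k}$ (hence to $\norm{\xi_{k}-y_{k}}$), $\norm{r_{k}}/\beta_{k}\to 0$ (since $\pi_{2}v(E(y_{k},\tau))\to 0$ uniformly), and $\norm{q-y_{k}}/\beta_{k}\to 0$ (since $\norm{q-y_{k}}<\norm{\xi_{k}-y_{k}}/k$). Therefore
\[
 \xi_{k}-q=(\xi_{k}-y_{k})+(y_{k}-q)=\beta_{k}v+r_{k}+(y_{k}-q)=\beta_{k}(v+z_{k}),\qquad \norm{z_{k}}\to 0 .
\]
Since $v\gg 0$, for all large $k$ we have $v+z_{k}\gg 0$, hence $\xi_{k}-q\gg 0$, i.e.\ $q<\xi_{k}$ with $\xi_{k},q\in S$, contradicting \textup{(H1)}; when $s_{k}<0$ one gets $\beta_{k}<0$ and $\xi_{k}<q$, again contradicting \textup{(H1)}. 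This proves the lemma. If an explicit $\Theta$ is desired, the same computation done quantitatively gives $\norm{q-y}\ge\bigl(\min_{i}v_{i}-\tfrac{\kappa\norm{v}}{1-\kappa}\bigr)\beta$ for $\xi\in S\cap U$, $y=\Pi(\xi)$, $\beta=\norm{\pi_{1}(\xi-y)}/\norm{v}$, once $\kappa$ in the construction of Theorem~\ref{thm:foliation} has been taken (as it may) small enough that $\min_{i}v_{i}>\kappa\norm{v}/(1-\kappa)$; combined with $\norm{\xi-y}\le\beta\norm{v}/(1-\kappa)$ this exhibits an admissible $\Theta$.

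\emph{Main obstacle.} The delicate point is the possible non-smoothness of $S$ at $q$, which forbids a transversality argument; one leans instead on \textup{(H1)} and on the leaves being $\ll$-chains with tangents in the open cone generated by $v$. Getting the \emph{linear} bound --- rather than merely $\norm{\xi-\Pi(\xi)}\to 0$ as $\Pi(\xi)\to q$ --- is what forces the use of the $C^{1}$ dependence of the leaves on the base point: when $\Pi(\xi)$ is close to $q$, the relevant sub-arc of $L_{\Pi(\xi)}$ is short and its tangent field is uniformly close to $v$, so $\xi-\Pi(\xi)$ is nearly a positive multiple of $v$; were $\norm{q-\Pi(\xi)}$ negligible compared with $\norm{\xi-\Pi(\xi)}$, then $\xi-q$ would inherit the sign of $\pm v$ and force $\xi$ and $q$ to be comparable.
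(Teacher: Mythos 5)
Your proof is correct and follows essentially the same strategy as the paper's: argue by contradiction, show that $\xi_k-q$ becomes asymptotically parallel to the Perron vector $v$ (because $\xi_k-\Pi(\xi_k)$ is an integral of leaf tangents close to $v/\norm{v}$ while $\Pi(\xi_k)-q$ is negligible), and contradict the unordered property (H1). The only difference is cosmetic: by invoking the homeomorphism $R$ from Theorem~\ref{tconjugacy} to get $\xi_k\to q$ at the outset, you collapse into one case the paper's dichotomy between $\xi_m\to\xi\ne q$ (limit direction $(\xi-q)/\norm{\xi-q}$) and $\xi_m\to q$ (limit direction $\pm v$).
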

\begin{proof}
  Assume, by contradiction, that there is no such constant $\Theta$.  Then, for each $m \in \mathbb{N}$, there is $\xi_m \in S \cap U$ such that
  \begin{equation*}
    \norm{\xi_m - \Pi(\xi_m)} > m \norm{q - \Pi(\xi_m)}.
  \end{equation*}
  As the sequence $\{\xi_m\}$ is bounded and $\Pi$ is continuous, we have that $\Pi(\xi_m) \to q$ as $m \to \infty$.  By passing to a subsequence, if~necessary, we can assume that $\xi_m \to \xi$ as $m \to \infty$.  We write
  \begin{equation*}
    \frac{\xi_m - q}{\norm{\xi_m - q}} = \frac{\xi_m - \Pi(\xi_m)}{\norm{\xi_m - q}} + \frac{\Pi(\xi_m) - q}{\norm{\xi_m - q}}.
  \end{equation*}
  Regarding the second term, we have
  \begin{equation*}
    \norm{\xi_m - q} \ge \norm{\xi_m - \Pi(\xi_m)} - \norm{\Pi(\xi_m) - q} > (m - 1) \norm{\Pi(\xi_m) - q}.
  \end{equation*}
  This implies that
  \begin{equation*}
    \frac{\norm{\Pi(\xi_m) - q}}{\norm{\xi_m - q}} < \frac{1}{m - 1}.
  \end{equation*}
  Furthermore, one has
  \begin{multline*}
    (1 -  \tfrac{1}{m-1}) \norm{\xi_m - q} < \norm{\xi_m - q} - \norm{q -  \Pi(\xi_m)} \\
    \le \norm{\xi_m - \Pi(\xi_m)}
    \\
    \le \norm{\xi_m - q} + \norm{q - \Pi(\xi_m)} < (1 + \tfrac{1}{m-1}) \norm{\xi_m - q}.
  \end{multline*}
  Thus
  \begin{equation*}
    \frac{\xi_m - \Pi(\xi_m)}{\norm{\xi_m - q}} - \frac{\xi_m - \Pi(\xi_m)}{\norm{\xi_m - \Pi(\xi_m)}} \to 0
  \end{equation*}
  as $m \to \infty$.  Gathering what we have obtained, we have that either
  \begin{equation*}
    \frac{\xi_m - q}{\norm{\xi_m - q}} \to \frac{\xi - q}{\norm{\xi - q}} \quad \text{as } m \to \infty
  \end{equation*}
  (in the case $\xi \ne q$) or
  \begin{equation*}
    \frac{\xi_m - q}{\norm{\xi_m - q}} \to \pm v
  \end{equation*}
  (in the case $\xi = q$).  In both cases, the limit is in the $\ll$ relation with $0$. This implies that, for $m$ sufficiently large, either $\xi_m \gg q$ or $\xi_m \ll q$. This is a contradiction because no two points in $S$ are related to $\ll$, (see (H1) in Section \ref{sec:sec-notation}).
\end{proof}

\medskip

\begin{proof}[Proof of Theorem~\ref{thm:smooth-unstable}]
  First we notice that   the assumptions (i) and (ii) are equivalent.  Indeed, let (i) be satisfied. Since $R$ is a homeomorphism, $\{R^{-1}(V)\}_{V \in \mathcal{V}}$ is a neighborhood base of $q$ in $R^{-1}(M')$.  It follows from~\eqref{equ:conjugacy} that
  \begin{equation*}
    T^{-1}(R^{-1}(V)) \subset R^{-1}(V)
  \end{equation*}
  for all $R^{-1}(V)$ with $V \in \mathcal{V}$. In an analogous manner, we can prove that (ii) implies (i). Now we prove that the elements of the neighborhood base $\mathcal{V}$ given in (i) are contained in $S$. Pick a point $\xi\in R^{-1}(V)$ with $V$ a member of $\mathcal{V}$. Using that $T^{-1}(R^{-1}(V))\subset R^{-1}(V)$, we have that the negative semitrajectory $\{\ldots, T^{-2}(\xi), T^{-1}(\xi),  \xi\}$ is contained in $R^{-1}(V) \subset S \cap U$.  By Lemma~\ref{lm:auxp},
  \begin{equation*}
    \norm{T^{-j}(\xi) - R(T^{-j}(\xi))} \le \Theta \norm{q - R(T^{-j}(\xi))}
  \end{equation*}
  for all $j = 0, 1, 2, \dots$.  From Theorem~\ref{thm:foliation}(c) and the conjugacy (\ref{equ:conjugacy}), we deduce  that $\norm{\xi - R(\xi)} = \norm{T^j(T^{-j}(\xi)) - T^{j}(R(T^{-j}(\xi)))} \le \rho^{j} \norm{T^{-j}(\xi) - R(T^{-j}(\xi))} \le \rho^{j} \Theta \norm{q - R(T^{-j}(\xi))}$ for all $j$.  Using that $0<\rho<1$, we obtain that $\xi = R(\xi)$. Observe that $\xi\in R^{-1}(V)\subset S\cap U$ and so $R(\xi)\in S\cap U$.
\end{proof}

Next we derive some direct consequences  of Theorems \ref{tconjugacy} and \ref{thm:smooth-unstable}.  The first immediate result is that we can construct the stable and unstable manifolds of $q$ on $S$.
Let $W^s_l(q,T|_{M_2})$ be a $C^1$ local stable manifold and $W^u_l(q,T|_{M_2})$ be the (necessarily unique) $C^1$ local unstable manifold of $q$ in the neighborhood $U$ for $T|_{M_2}$, where $U$ and $M_2$ are given in Theorem \ref{tconjugacy}. Let $M'_l$ be the \textup{(}necessarily unique\textup{)} $C^1$ local unstable manifold of $q$ in the neighborhood $U$ for $T$. By the assumption, we know that $W^u_l(q,T|_{M_2})=M'_l$.

The following theorem gives the precise statement on the construction of the invariant manifolds on $S$.
\begin{theorem}
  \label{thm:inv-local-mani}
  A local stable manifold of $q$ on $S$ given by
  \begin{equation*}
     W^s_l(q,T|_{S}) = R^{-1}(W^s_l(q,T|_{M_2}))
  \end{equation*}
  is a $C^0$ manifold. The local unstable manifold of $q$ on $S$ given by
  \begin{equation*}
    W^u_l(q,T|_{S}) = W^u_l(q,T|_{M_2}) = M'_l
  \end{equation*}
  is a $C^1$ manifold.
\end{theorem}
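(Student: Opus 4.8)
\medskip

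The plan is to prove the two assertions by different routes: for the stable manifold I would transport the classical smooth stable manifold of $T|_{M_2}$ through the conjugating homeomorphism $R$ of Theorem~\ref{tconjugacy}, and for the unstable manifold I would invoke Theorem~\ref{thm:smooth-unstable} to show that the smooth local unstable manifold of $T$ itself already lies on $S$.

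For the stable part, I would first note that $M_2$ is a $C^1$ manifold which, being locally invariant (Theorem~\ref{thm:foliation}(b)), carries a $C^1$ self-map $T|_{M_2}$ near $q$ with linearization $DT(q)|_{W}$, whose spectrum is $\mathrm{spec}(DT(q))\setminus\{\mu\}$. The classical local stable manifold theorem then yields a $C^1$ manifold $W^s_l(q,T|_{M_2})$, characterized as the set of $y$ near $q$ whose forward $T|_{M_2}$-orbit remains in $U$ and converges to $q$. Using the conjugacy \eqref{equ:conjugacy} and the fact that $R$ is a homeomorphism of $S\cap U$ onto $M_2$, a point $\xi\in S\cap U$ has a forward orbit staying in $U$ and tending to $q$ precisely when $R(\xi)$ does; hence $R^{-1}(W^s_l(q,T|_{M_2}))$ is indeed a local stable manifold of $q$ on $S$, and, being the homeomorphic image of a topological manifold, it is a $C^0$ manifold. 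One cannot in general expect it to be $C^1$, since $R^{-1}$ is assembled from the merely continuous foliation $\mathcal L$ and the retraction $\Pi$.

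For the unstable part, I would use the identification $W^u_l(q,T|_{M_2})=M'_l$ recalled before the statement: the unique $C^1$ local unstable manifold $M'_l$ of $q$ for $T$ is tangent at $q$ to the unstable subspace $E^u\subset W$, hence is contained in $M_2$, and inside $M_2$ it is characterized by backward contraction to $q$, i.e. it is the local unstable manifold of $T|_{M_2}$. I would then apply Theorem~\ref{thm:smooth-unstable} with $M'=M'_l$. To check hypothesis~(i), choose a norm on $E^u$ adapted to $DT(q)|_{E^u}$ so that $\minnorm(DT(q)|_{E^u})>1$; since $M'_l$ is $C^1$ and tangent to $E^u$, on a sufficiently small neighborhood $T^{-1}|_{M'_l}$ is a contraction towards $q$ in the induced metric, so the metric balls about $q$ in $M'_l$ form a neighborhood base $\mathcal V$ with $T^{-1}(V)\subset V$ for every $V\in\mathcal V$ (if $DT(q)$ has no eigenvalue of modulus $>1$ this is vacuous, as $M'_l=\{q\}$). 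Theorem~\ref{thm:smooth-unstable} then provides $U'\subset U$ with $M'_l\cap U'\subset S$. Finally, since $y\in L_y$ for every $y\in M_2$, the retraction $\Pi$, and so $R$, restricts to the identity on $M_2$; therefore $R^{-1}(W^u_l(q,T|_{M_2}))=M'_l$, and because $M'_l\cap U'\subset S$ this set is genuinely the local unstable manifold of $q$ on $S$, which is $C^1$ by construction.

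The step I expect to be the main obstacle is the verification of hypothesis~(i) of Theorem~\ref{thm:smooth-unstable} — building a backward-invariant neighborhood base of $q$ inside $M'_l$ — which is where the expansion of $T$ along the unstable directions has to be exploited quantitatively through an adapted norm, together with the (routine but necessary) identification $W^u_l(q,T|_{M_2})=M'_l$ via uniqueness and tangency of unstable manifolds. By comparison, the stable part is soft: it amounts to bookkeeping with the homeomorphism $R$ and the dynamical characterization of the classical stable manifold.
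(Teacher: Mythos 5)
Your proposal is correct and follows exactly the route the paper intends: the paper states this theorem without a written proof, presenting it as an immediate consequence of the conjugacy $R$ from Theorem~\ref{tconjugacy} (for the $C^0$ stable manifold) and of Theorem~\ref{thm:smooth-unstable} applied to $M'=M'_l$ (to place the smooth unstable manifold inside $S$). Your verification of hypothesis~(i) via an adapted norm making $T^{-1}|_{M'_l}$ a local contraction is precisely the standard argument the authors leave implicit, so there is nothing to fault.
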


\noindent As a direct consequence of Theorem \ref{thm:smooth-unstable}, we have the following:

\begin{corollary}
  \label{cor:unstable-global-1}
  The global unstable manifold
  \begin{equation*}
    M'_{g}:= \bigcup_{m=0}^{\infty} T^m(M'_l)
  \end{equation*}
  is contained in $S$.
\end{corollary}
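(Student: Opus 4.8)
The statement to prove is Corollary~\ref{cor:unstable-global-1}: the global unstable manifold $M'_g := \bigcup_{m=0}^\infty T^m(M'_l)$ is contained in $S$.

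My plan is to bootstrap from Theorem~\ref{thm:smooth-unstable}, which already provides a neighborhood $U' \subset U$ of $q$ such that $M'_l \cap U' = W^u_l(q, T|_S) \cap U' \subset S$ (here I use that $M'_l = W^u_l(q, T|_{M_2}) = W^u_l(q, T|_S)$, by Theorem~\ref{thm:inv-local-mani}). The key point is that the local unstable manifold $M'_l$, shrunk if necessary, can be taken to be contained in $S$ outright — indeed after replacing $M'_l$ by $M'_l \cap U'$ we have $M'_l \subset S$. Then the result follows by invariance: since $S$ is invariant under $T$ (indeed $T(S) = S$ by property (H4)), we get $T^m(M'_l) \subset T^m(S) = S$ for every $m \ge 0$, and hence $M'_g = \bigcup_{m \ge 0} T^m(M'_l) \subset S$.

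So the core of the proof is essentially one line once the setup is in place; the main thing to be careful about is the bookkeeping on which ``local unstable manifold'' we mean and why Theorem~\ref{thm:smooth-unstable} applies to it. Concretely, I would first invoke the assumption standing in this subsection that $W^u_l(q, T|_{M_2}) = M'_l$, i.e.\ the (unique) $C^1$ local unstable manifold of $q$ for $T$ coincides with that for $T|_{M_2}$ and lies in $M_2$. Then I would check hypothesis~(i) of Theorem~\ref{thm:smooth-unstable} with $M' = M'_l$: a local unstable manifold admits a neighborhood base $\mathcal{V}$ of $q$ (in $M'_l$) consisting of sets that are positively invariant under $T$ and on which $T$ acts as an expansion near $q$, so that $T^{-1}(V) \subset V$ for each $V \in \mathcal{V}$ — this is a standard property of the local unstable manifold of a $C^1$ diffeomorphism at a fixed point and is exactly what (i) demands. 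Theorem~\ref{thm:smooth-unstable} then yields a neighborhood $U'$ with $M'_l \cap U' \subset S$.

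The last step is to note that $M'_g$ does not depend on which representative $M'_l$ of the local unstable manifold we chose: if $\tilde M'_l \subset M'_l$ is a smaller piece (a neighborhood of $q$ in $M'_l$), then $\bigcup_{m \ge 0} T^m(\tilde M'_l) = \bigcup_{m \ge 0} T^m(M'_l)$, because every point of $M'_l$ eventually enters $\tilde M'_l$ under backward iteration (points on the unstable manifold converge to $q$ under $T^{-1}$), hence is the forward image under some $T^m$ of a point of $\tilde M'_l$. Therefore we may assume from the start that $M'_l \subset S$, and then $M'_g = \bigcup_{m\ge 0} T^m(M'_l) \subset \bigcup_{m \ge 0} T^m(S) = S$ by invariance (H4). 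I do not expect any serious obstacle here; the only mildly delicate point is making the independence-of-representative remark precise, and this is routine given that backward orbits on $M'_l$ converge to $q$.
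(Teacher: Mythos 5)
Your argument is correct and is precisely the (unwritten) argument the paper intends: apply Theorem~\ref{thm:smooth-unstable} with $M'=M'_l$ (whose hypothesis~(i) holds by the standard contraction of $T^{-1}$ on the local unstable manifold) to get $M'_l\cap U'\subset S$, then use that backward orbits on $M'_l$ converge to $q$ together with $T(S)=S$ to absorb all of $M'_g$ into $S$. Nothing further is needed.
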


Recall that $q$ is a \emph{hyperbolic} fixed point if there are no eigenvalues of $DT(q)$ having modulus one.  By (C1), this is equivalent to the nonexistence of eigenvalues of $DT(q)|_W$ with modulus one.  The following is a form of the Grobman--Hartman theorem.
\begin{theorem}
  \label{thm:Grobman-Hartman}
  Let $q$ be a hyperbolic fixed point.  Then $T|_S$ is, in a small neighborhood of $q$, topologically conjugate to $DT(q)|_W$.
\end{theorem}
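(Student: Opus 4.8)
The plan is to reduce everything to the classical Grobman--Hartman theorem via the conjugacy already established in Theorem~\ref{tconjugacy}. That theorem provides a homeomorphism $R\colon S\cap U\to M_2$ with $(T|_{M_2})\circ R=R\circ (T|_{S\cap U})$, so topological conjugacy is transitive and it suffices to show that, near $q$, the $C^1$ map $T|_{M_2}$ is topologically conjugate to the linear automorphism $DT(q)|_W$ of $W$.

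First I would verify that $q$ is, in the usual sense, a hyperbolic fixed point of the $C^1$ map $T|_{M_2}$. Since $\det DT(q)\neq 0$ the inverse function theorem makes $T$ a local diffeomorphism near $q$, and by local invariance of $M_2$ (Theorem~\ref{thm:foliation}(b)) we may shrink $U$ so that $T|_{M_2}$ is a $C^1$ diffeomorphism of a neighborhood of $q$ in $M_2$ onto its image. Because $M_2$ is tangent at $q$ to $W$, we have $\mathcal{T}_q M_2 = W$; differentiating the inclusion $T(M_2\cap U)\subset M_2$ at $q$ and applying the chain rule shows that, under this identification, the intrinsic derivative $D(T|_{M_2})(q)$ is exactly $DT(q)|_W$ (note $DT(q)W\subset W$ already, $W$ being the $DT(q)$-invariant complement of $\mathrm{span}\{v\}$). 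The spectrum of $DT(q)|_W$ consists precisely of the eigenvalues of $DT(q)$ other than $\mu$; hyperbolicity of $q$ for $T$ means none of these lies on the unit circle, so $q$ is a hyperbolic fixed point of $T|_{M_2}$ with invertible linear part $DT(q)|_W$.

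Next I would pick any $C^1$ chart of $M_2$ near $q$ sending $q$ to $0\in W$ (for instance the one given by the graph representation of $M_2$ over $W$, or equivalently the restriction to $M_2$ of the projection onto $W$, which is a $C^1$ diffeomorphism near $q$ since its differential at $q$ is the identity on $W$). In this chart $T|_{M_2}$ becomes a $C^1$ diffeomorphism of an open neighborhood of $0$ in $W$ fixing $0$ with hyperbolic linear part $DT(q)|_W$, so the Grobman--Hartman theorem yields a homeomorphism of a neighborhood of $q$ in $M_2$ onto a neighborhood of $0$ in $W$ conjugating $T|_{M_2}$ to $DT(q)|_W$. Composing this homeomorphism with $R$ produces the desired local topological conjugacy between $T|_S$ and $DT(q)|_W$, after replacing $U$ by the smaller neighborhood on which all of the above is valid.

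The one genuinely delicate point is the identification $D(T|_{M_2})(q)=DT(q)|_W$: one must check that the derivative of $T$ taken \emph{along} the submanifold $M_2$, read in an intrinsic chart, coincides with the ambient $DT(q)$ restricted to $\mathcal{T}_q M_2=W$. This follows from the chain rule together with local invariance $T(M_2\cap U)\subset M_2$, but it is precisely the step where the $C^1$ regularity of $M_2$ furnished by Theorem~\ref{thm:foliation} (as opposed to mere topological structure, which is all that is a priori available for $S$ itself) is indispensable, since Grobman--Hartman requires a differentiable map with hyperbolic linearization. Everything else is routine bookkeeping: invertibility of $T|_{M_2}$ near $q$ comes from $\det DT(q)\neq 0$, and the hyperbolicity of the linear part is exactly the hypothesis on $q$ combined with \textup{(C1)}.
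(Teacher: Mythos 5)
Your proposal is correct and follows essentially the same route as the paper: apply the classical Grobman--Hartman theorem to the $C^1$ map $T|_{M_2}$ (whose linearization at $q$ is $DT(q)|_W$) and then transport the conjugacy to $S$ via the homeomorphism $R$ from Theorem~\ref{tconjugacy}. The paper states these two steps in two sentences; your additional verification that $D(T|_{M_2})(q)=DT(q)|_W$ and that hyperbolicity of $q$ passes to $T|_{M_2}$ is exactly the bookkeeping the paper leaves implicit.
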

\begin{proof}
  By the classical Grobman--Hartman theorem (see, e.g., Pugh \cite{Pugh1969}), $T|_{M_2}$ is locally topologically conjugate to $DT(q)|_W$.  An application of Theorem~\ref{tconjugacy} concludes the proof.
\end{proof}
In view of the above theorem we can legitimately say that a hyperbolic fixed point $q$ is a \emph{saddle on $S$} if $\nu < 1$ and there is an eigenvalue of $DT(q)|_W$ with modulus greater than one.

\medskip
The conjugacy \eqref{equ:conjugacy} is also useful to compute the index of $q$ on the carrying simplex.
\begin{corollary}\label{coro-index}
 Assume  that $q$ is an isolated fixed point of $T$.  Then
\begin{equation*}
index(T|_S,q) = index(T,q).
\end{equation*}
Moreover, if $1$ is not an eigenvalue of $DT(q)$, then
\begin{equation*}
index(T|_S,q) = index(T,q) = (-1)^m,
\end{equation*}
where $m$ is the sum of the multiplicities of all the eigenvalues of $DT(q)$ which are greater than one.
\end{corollary}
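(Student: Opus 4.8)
The plan is to route the identity $index(T|_{S},q)=index(T,q)$ through the pseudo\nobreakdash-unstable manifold: show $index(T|_{S},q)=index(T|_{M_2},q)$ by topological conjugacy, and show $index(T,q)=index(T|_{M_2},q)$ by exhibiting $T$, near $q$, as a skew product over $T|_{M_2}$ whose fibre direction is contracted, and then deforming the fibre part to a constant. First note that, as $q$ is an isolated fixed point of $T$, it is automatically an isolated fixed point of each of the restrictions $T|_{S}$, $T|_{M_2}$ and $T|_{M_1}$ (any fixed point of a restriction is a fixed point of $T$), so all the indices below are defined. The first half is immediate: by Theorem~\ref{tconjugacy} the homeomorphism $R\colon S\cap U\to M_2$ conjugates $T|_{S\cap U}$ to $T|_{M_2}$, and the fixed point index is invariant under topological conjugacy, whence $index(T|_{S},q)=index(T|_{M_2},q)$.

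For the second half I would work in the $C^1$ chart $\overline E$ from the proof of Theorem~\ref{thm:foliation} (composed with a $C^1$ chart of $M_2$), in which a neighbourhood of $q$ is parametrized by pairs $(y,s)$ with $y\in M_2$ and $s$ the signed arclength along the leaf $L_y$; here $M_2=\{s=0\}$, $E(y,0)=y$, and $M_1=L_0$. After shrinking $U$ so that $T(U)$ stays inside the domain of the chart (possible since $T(q)=q$ is interior), Theorem~\ref{thm:foliation}(b),(c) gives that $T$ carries $M_2$ into $M_2$ and leaves into leaves, so in these coordinates
\begin{equation*}
  \widehat T(y,s)=(\overline T(y),\,g(y,s)),\qquad \overline T:=T|_{M_2},
\end{equation*}
with $g(y,0)=0$, and, by Theorem~\ref{thm:foliation}(a) applied to $M_1=L_0$ together with the unit\nobreakdash-speed parametrization, $g(0,\cdot)$ a contraction fixing $0$ after a further shrinking. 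Now set $H_t(y,s)=(\overline T(y),\,t\,g(y,s))$, $t\in[0,1]$. A fixed point of any $H_t$ forces $\overline T(y)=y$, hence $y=0$ (isolated), and then $t\,g(0,s)=s$ with $g(0,\cdot)$ a contraction fixing $0$, hence $s=0$; so $(0,0)$ is, uniformly in $t$, the only fixed point of $H_t$ in a fixed small neighbourhood, and homotopy invariance of the index gives $index(\widehat T,(0,0))=index(H_0,(0,0))$. But $H_0$ is the product of $\overline T$ with the constant map $s\mapsto0$, so multiplicativity of the index yields $index(H_0,(0,0))=index(\overline T,0)\cdot1=index(T|_{M_2},q)$. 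Since $\overline E$ is a homeomorphism onto its image, $index(T,q)=index(\widehat T,(0,0))=index(T|_{M_2},q)$, and combining this with the first paragraph we obtain $index(T|_{S},q)=index(T,q)$.

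For the ``moreover'' part, if $1$ is not an eigenvalue of $DT(q)$ then $I-DT(q)$ is invertible, so $q$ is a nondegenerate (hence isolated) fixed point and $index(T,q)=\operatorname{sign}\det(I-DT(q))=\operatorname{sign}\prod_i(1-\lambda_i)$, the product running over the eigenvalues $\lambda_i$ of $DT(q)$ counted with algebraic multiplicity; a real $\lambda_i<1$ contributes a positive factor, a non\nobreakdash-real conjugate pair contributes $|1-\lambda_i|^2>0$, and a real $\lambda_i>1$ contributes a negative factor, there being exactly $m$ of the last kind, so $index(T,q)=(-1)^m$; with the first part this gives $index(T|_{S},q)=(-1)^m$. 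The main obstacle is the skew\nobreakdash-product reduction in the middle paragraph: because $\mathcal{L}$ and $M_2$ are only \emph{locally} invariant, one must shrink the neighbourhoods enough that the fixed\nobreakdash-point equation for $H_t$ never leaves the chart, and one must turn the ambient\nobreakdash-norm contraction of Theorem~\ref{thm:foliation}(a) into a genuine contraction of $g(0,\cdot)$ in the arclength parameter (using $\lVert\partial E/\partial s\rVert\equiv1$ and the fact that chord length is asymptotic to arclength on short arcs). Once the skew\nobreakdash-product form and the uniform isolation of $(0,0)$ along the homotopy are in place, the remainder is the standard homotopy\nobreakdash-invariance and multiplicativity machinery for the fixed point index.
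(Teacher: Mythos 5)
Your proposal is correct and follows the same overall strategy as the paper: transport the index from $S$ to $M_2$ via the conjugacy $R$ of Theorem~\ref{tconjugacy}, and relate $index(T,q)$ to $index(T|_{M_2},q)$ through the foliation coordinates, finishing with the standard $\operatorname{sign}\det(I-DT(q))=(-1)^m$ computation. The one place where you genuinely diverge is the middle step: the paper asserts that, near $q$, $T$ is topologically conjugate to the \emph{product} map $(T|_{M_2},T|_{M_1})$, applies multiplicativity of the index to get $index(T,q)=index(T|_{M_2},q)\cdot index(T|_{M_1},q)$, and uses that $T|_{M_1}$ is a contraction (so its index is $1$). You instead use only what Theorem~\ref{thm:foliation} literally provides --- a \emph{skew} product $\widehat T(y,s)=(\overline T(y),g(y,s))$ in the chart $\overline E$ --- and deform the fibre component to the constant $0$ by the admissible homotopy $H_t$, invoking multiplicativity only for the genuine product $H_0$. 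This buys you a more self-contained argument: the paper's claimed conjugacy to the product is not an immediate consequence of the foliation (a skew product need not be conjugate to a product), whereas your homotopy sidesteps that issue entirely; the price is the bookkeeping you correctly flag (uniform isolation of the fixed point along the homotopy, staying inside the chart under the merely local invariance, and converting the ambient-norm contraction on $M_1$ into a contraction of $g(0,\cdot)$ in the arclength parameter via the chord/arc comparison). All of these are routine and you identify exactly the right facts to carry them out, so I see no gap.
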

\begin{proof}
From Theorem \ref{tconjugacy}, it is clear that, in a small neighborhood of $q$, there is a topological conjugacy between $T$ and $(T|_{M_{2}},T|_{M_{1}})$, where $M_2$ (resp. $M_1$) is the $\rho$\nobreakdash-\hspace{0pt}pseudo locally $C^1$ unstable (resp. stable) manifold of $q$.  By the multiplicativity of fixed point index (see \cite{Granas2003})
\begin{equation*}
index(T,q) = index(T|_{M_2},q) \cdot index(T|_{M_1},q).
\end{equation*}
Since $0<\mu<1$, we have that $index(T|_{M_1},q) = 1$. It then follows from Theorem \ref{tconjugacy} (see (\ref{equ:conjugacy})) that
\begin{equation*}
index(T,q) = index(T|_{M_2},q) = index(T|_S,q).
\end{equation*}
The last result is immediate.
\end{proof}
\noindent Theorems \ref{tconjugacy} and \ref{thm:smooth-unstable} can be used to give partial responses to  the question posed by M. W. Hirsch in \cite{hirsch1988} on the smoothness of the carrying simplex.
\begin{corollary}
  \label{cor:unstable-global-2}
  If $\nu > 1$,  then $M_{2} = S \cap U$. As a consequence, $S$ is a $C^1$ manifold in a neighborhood of $q$.
\end{corollary}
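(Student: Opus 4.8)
The plan is to invoke Theorem~\ref{thm:smooth-unstable} with $M':=M_2$ and then to upgrade the inclusion it produces to an equality.

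First I would record that when $\nu>1$ we may take the constant $\sigma\in(\rho,\nu)$ of Theorem~\ref{thm:foliation} with $\sigma>1$, so that $M_2$ is (locally) the unstable manifold of $q$ for $T$ --- in particular a locally invariant $C^1$ submanifold of itself, so the structural hypothesis of Theorem~\ref{thm:smooth-unstable} holds with $M'=M_2$. Next I would verify its hypothesis (i): a neighborhood base $\mathcal{V}$ of $q$ in $M_2$ with $T^{-1}(V)\subseteq V$ for every $V\in\mathcal{V}$. Since $W$ is the invariant subspace of $DT(q)$ corresponding to all eigenvalues except $\mu$, the smallest modulus among the eigenvalues of $DT(q)|_W$ equals $\nu$, hence the spectral radius of $\bigl(DT(q)|_W\bigr)^{-1}$ is $\nu^{-1}<1$. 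By local invariance of $M_2$, the map $g:=T^{-1}|_{M_2}$ is, near $q$, a $C^1$ diffeomorphism fixing $q$ whose tangent map at $q$ is $\bigl(DT(q)|_W\bigr)^{-1}$, of spectral radius $<1$; a standard adapted-norm argument (cf.~\cite[Prop.~2.8]{Hirsch1977}) then produces arbitrarily small neighborhoods $V$ of $q$ in $M_2$ with $g(V)\subseteq V$, i.e.\ $T^{-1}(V)\subseteq V$, which is exactly what (i) requires. Theorem~\ref{thm:smooth-unstable} then yields a neighborhood $U'\subseteq U$ of $q$ with $M_2\cap U'\subseteq S$.

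To pass from inclusion to equality, I would use that the conjugacy $R\colon S\cap U\to M_2$ from Theorem~\ref{tconjugacy} is the leaf-projection $\Pi|_{S\cap U}$, and that every $y\in M_2$ lies on its own leaf $L_y$ (because $E(y,0)=y$), so $\Pi|_{M_2}=\mathrm{id}$; hence $R$ restricts to the identity on $M_2\cap U'\subseteq S\cap U$. Consequently the homeomorphism $R^{-1}\colon M_2\to S\cap U$ is the identity on the set $M_2\cap U'$, which is open in $M_2$; since $R^{-1}$ is an open map, $M_2\cap U'$ is open in $S\cap U$. Choosing an open $O\subseteq U'$ with $q\in O$ and $S\cap O\subseteq M_2\cap U'$, I get $S\cap O\subseteq M_2$ while $M_2\cap O\subseteq M_2\cap U'\subseteq S$, and combining the two gives $S\cap O=M_2\cap O$. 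Shrinking $U$ to $O$ (as is done throughout the paper) yields $M_2=S\cap U$, and $S$ is then a $C^1$ manifold near $q$ because $M_2$ is a codimension-one $C^1$ manifold.

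The main obstacle --- and the only point going beyond a direct application of Theorem~\ref{thm:smooth-unstable} --- is precisely this last passage from the one-sided inclusion $M_2\cap U'\subseteq S$ to the identification $S=M_2$ near $q$. The argument above resolves it by exploiting that $R$ is literally the leaf-projection, hence fixes all of $M_2$, so openness of $M_2\cap U'$ in $S\cap U$ drops out of $R$ being a homeomorphism; alternatively one could appeal to invariance of domain, both $S\cap U$ and $M_2$ being topological manifolds of dimension $n-1$, to conclude directly that the open subset $M_2\cap U'$ of $M_2$ is open in $S\cap U$.
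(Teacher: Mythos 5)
Your proof is correct and follows the route the paper intends: the corollary is stated without proof as a consequence of Theorems~\ref{tconjugacy} and~\ref{thm:smooth-unstable}, and you apply the latter with $M'=M_2$ (hypothesis (i) holding because $q$ is attracting for $T^{-1}|_{M_2}$ when $\nu>1$), just as in Corollary~\ref{cor:unstable-global-1}. Your additional step upgrading the inclusion $M_2\cap U'\subseteq S$ to equality via $R|_{M_2}=\Pi|_{M_2}=\mathrm{id}$ is sound, and it is essentially what the paper's proof of Theorem~\ref{thm:smooth-unstable} already establishes internally, where one shows $\xi=R(\xi)$ for every $\xi\in R^{-1}(V)$ and hence $R^{-1}(V)=V$.
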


Following~\cite{Hirsch1977}, we say that $q$ is \emph{Lyapunov unstable on $S$} if for each neighborhood $U_1$ of $q$, there exists a neighborhood $U_2$ of $q$ so that
\begin{equation*}
  U_2 \cap S \subset T^m(U_1 \cap S)
\end{equation*}
for all $m = 0, 1, 2, \dots$  By~\cite[Lemma~5A.2]{Hirsch1977}, this is equivalent to the existence of  a neighborhood base, $\mathcal{V}$, of $q$ in $S$ with the property that  $T^{-1}(V) \subset V$ for any $V \in \mathcal{V}$.  An application of Theorem~\ref{thm:smooth-unstable} gives us the following:
\begin{corollary}
  \label{cor:unstable-global-3}
 If $q$ is Lyapunov unstable on $S$, then $M_{2} = S \cap U$. In particular, $S$ is  a $C^1$ manifold in a neighborhood of $q$.
\end{corollary}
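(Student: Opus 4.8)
The plan is to apply Theorem~\ref{thm:smooth-unstable} with $M' = M_2$ and then sharpen the resulting one-sided inclusion into an equality. First note that $M_2$ is a locally invariant $C^1$ submanifold of itself by Theorem~\ref{thm:foliation}(b), so it is an admissible choice of $M'$. By the characterization recalled just before the statement, $q$ being Lyapunov unstable on $S$ is equivalent, via~\cite[Lemma~5A.2]{Hirsch1977}, to the existence of a neighborhood base $\mathcal{V}$ of $q$ in $S$ with $T^{-1}(V)\subset V$ for each $V\in\mathcal{V}$, and we may take every $V\in\mathcal{V}$ to lie inside $U$. Since $R$ maps $S\cap U$ into $M_2$, we have $R^{-1}(M_2)=S\cap U$, so $\mathcal{V}$ is precisely a neighborhood base of $q$ in $R^{-1}(M')$ satisfying the property demanded in hypothesis~(ii) of Theorem~\ref{thm:smooth-unstable}. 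That theorem therefore provides a neighborhood $U'\subset U$ of $q$ with $M_2\cap U'\subset S$.

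It remains to prove the reverse inclusion near $q$, and this is the only step requiring a small argument of its own, since Theorem~\ref{thm:smooth-unstable} delivers containment rather than equality. Recall that $R=\Pi|_{S\cap U}$, where $\Pi(\xi)$ is the base point in $M_2$ of the leaf $L_y$ containing $\xi$; as each $y\in M_2$ lies on its own leaf, $\Pi(y)=y$, so $R$ is the identity on $M_2\cap U'$ (which is contained in $S\cap U$ by the previous paragraph). Since $R$ is continuous and $R(q)=q\in U'$, there is a neighborhood $U''\subset U'$ of $q$ with $R(S\cap U'')\subset U'$. For $\eta\in S\cap U''$ we then have $R(\eta)\in M_2\cap U'\subset S\cap U$, so $R(\eta)$ is again in the domain of $R$ and $R(R(\eta))=R(\eta)$; the injectivity of $R$ (guaranteed by (H1), as in the proof of Theorem~\ref{tconjugacy}) forces $\eta=R(\eta)\in M_2$. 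Hence $S\cap U''\subset M_2$, and together with $M_2\cap U''\subset M_2\cap U'\subset S$ this gives $S\cap U''=M_2\cap U''$, which is the asserted identity $M_2=S\cap U$ with $U$ taken to be $U''$.

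Finally, $M_2$ is a one-codimensional $C^1$ manifold by Theorem~\ref{thm:foliation}(b), so the equality $S\cap U=M_2$ exhibits $S$ near $q$ as a $C^1$ hypersurface; thus $S$ is a $C^1$ manifold in a neighborhood of $q$. The only genuine obstacle in this scheme is the upgrade from the inclusion $M_2\cap U'\subset S$ to the equality $S\cap U''=M_2\cap U''$, and, as sketched above, it is handled simply by exploiting that $R$ is the leaf-projection, which fixes $M_2$ pointwise; the remainder is a direct invocation of Theorems~\ref{thm:foliation}, \ref{tconjugacy} and~\ref{thm:smooth-unstable}.
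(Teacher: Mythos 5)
Your proposal is correct and follows the paper's intended route: the paper derives this corollary simply by invoking Theorem~\ref{thm:smooth-unstable} with $M'=M_2$, using the quoted equivalence from \cite[Lemma~5A.2]{Hirsch1977} to produce the neighborhood base required in hypothesis (ii), exactly as you do. The one place you go beyond the paper's (unstated) proof is the upgrade from the inclusion $M_2\cap U'\subset S$ to the equality; your argument via $\Pi|_{M_2}=\mathrm{id}$ and the injectivity of $R$ is correct, and it recovers from the \emph{statement} of Theorem~\ref{thm:smooth-unstable} what is already implicit in its \emph{proof} (there one shows $\xi=R(\xi)$ for every $\xi$ in the invariant neighborhoods $V\subset S$, which yields $V=R(V)$ and hence both inclusions at once).
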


\subsection{Tangent cones}
The tangent cone of the carrying simplex $S$ at a point $\xi \in S$ is defined as
\begin{equation*}
  \mathcal{C}_{\xi}(S) = \{\alpha z : \alpha \ge 0 \text{ and } \frac{\xi_m - \xi}{\norm{\xi_m - \xi}} \to z  \text{ with } \xi_m \subset S \setminus \{\xi\} \text{ a sequence tending to } \xi\}.
\end{equation*}

For each $\xi \in S$, the tangent cone $\mathcal{C}_{\xi}(S)$ is a nontrivial  closed set. That is, it is not  $\{0\}$.
In this subsection, we employ many notation used in Section \ref{sec:sec-inv-fol} such as $\pi_{1}$, $\pi_{2}$, $v$, $W$, and so on.

\begin{lemma}
  \label{lm:aux}
  For a sufficiently small neighborhood $U$ of $q$, there exists a constant $\tilde{\Theta} > 0$ so that
  \begin{equation*}
    \norm{\pi_1(\xi' - \xi'')} \le \tilde{\Theta} \norm{\pi_2(\xi' - \xi'')}
  \end{equation*}
  for all $\xi', \xi'' \in S \cap U$.
\end{lemma}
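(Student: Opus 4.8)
The plan is to argue by contradiction, imitating the structure of the proof of Lemma~\ref{lm:auxp}. Recall that $\pi_1$ denotes the projection of $\RR^n$ onto $\spanned\{v\}$ along $W$ and $\pi_2$ the projection onto $W$ along $\spanned\{v\}$, where $v \gg 0$, so that $\pi_1 + \pi_2 = \mathrm{id}$. Observe first that the asserted inequality is trivially true whenever $\xi' = \xi''$ (both sides vanish), so only pairs of distinct points need be considered; note also that property (H1) is a \emph{global} property of $S$, so shrinking $U$ will matter only insofar as it produces sequences converging to $q$.

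First I would suppose the conclusion fails. Applying the negation to the shrinking neighborhoods $U_m := \{\, x \in \RR^n : \norm{x - q} < 1/m \,\}$ and to the candidate constants $\tilde{\Theta} = m$, I obtain, for every $m \in \NN$, points $\xi'_m, \xi''_m \in S \cap U_m$ with $\xi'_m \ne \xi''_m$ and
\begin{equation*}
  \norm{\pi_1(\xi'_m - \xi''_m)} > m \, \norm{\pi_2(\xi'_m - \xi''_m)}.
\end{equation*}
In particular $\xi'_m, \xi''_m \to q$. Setting $u_m := (\xi'_m - \xi''_m)/\norm{\xi'_m - \xi''_m}$, we have $\norm{u_m} = 1$ and $\norm{\pi_1 u_m} > m \, \norm{\pi_2 u_m}$.

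Next I would extract the limiting direction. Since $\norm{\pi_1 u_m} \le \norm{\pi_1}$ is bounded, the last inequality forces $\norm{\pi_2 u_m} < \norm{\pi_1}/m \to 0$, whence $\norm{\pi_1 u_m} = \norm{u_m - \pi_2 u_m} \to 1$. Writing $\pi_1 u_m = t_m v$ with $t_m \in \RR$, we get $\abs{t_m}\,\norm{v} \to 1$, so along a subsequence $u_m = t_m v + \pi_2 u_m$ converges either to $v/\norm{v}$ or to $-v/\norm{v}$. Because $v \gg 0$, the condition ``all coordinates positive'' (resp.\ ``all coordinates negative'') is an open condition around $\pm v/\norm{v}$; hence for all sufficiently large $m$ along that subsequence we have $u_m \gg 0$ or $u_m \ll 0$, i.e.\ $\xi'_m - \xi''_m \gg 0$ or $\xi'_m - \xi''_m \ll 0$. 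In either case $\xi'_m$ and $\xi''_m$ are two distinct points of $S$ related by $<$, contradicting (H1). This proves the lemma.

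I do not anticipate a genuine obstacle here: the argument is entirely parallel to that of Lemma~\ref{lm:auxp}, and the only points needing a little care are (i) discarding the trivial case $\xi' = \xi''$ so that the normalization $u_m$ is legitimate, and (ii) handling the sign ambiguity of the limiting direction by passing to a subsequence. The constant $\tilde{\Theta}$ is not exhibited explicitly—its existence is what the contradiction establishes. One could alternatively dispense with the shrinking of $U$ and run the same argument for a fixed $U$, since (H1) holds throughout $S$.
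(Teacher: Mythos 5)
Your proof is correct and rests on exactly the same two facts the paper uses: the set of vectors $\gg 0$ or $\ll 0$ is open and contains $\pm v/\norm{v}$, and $S$ is unordered by (H1). The paper merely packages this directly (choosing $\tilde{\Theta}$ so that $\norm{\pi_2 z} < \tilde{\Theta}^{-1}\norm{\pi_1 z}$ already forces $z \gg 0$ or $z \ll 0$) rather than via your sequential contradiction, and, as you observe, no shrinking of $U$ is actually needed.
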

\begin{proof}
  As the set of those $z$ which are $ \gg 0$ or $\ll 0$ is open, we can take a constant $\tilde{\Theta} > 0$ with the following property: if
  \begin{equation*}
    \norm{\pi_2 (z)} < \frac{1}{\tilde{\Theta}} \norm{\pi_1 (z)}
  \end{equation*}
  for some $z \in \RR^n$,   then $z \gg 0$ or $z \ll 0$.   As $S$ is not ordered by $\ll$, the lemma follows.
\end{proof}
\begin{lemma}
  \label{lm:tangent-proj}
  There exists a neighborhood $U$ of $q$ with the following property: for each $\xi \in S \cap U$ and for each $w \in W$ with $\norm{w} = 1$, there is $z \in \mathcal{C}_{\xi}(S)$ so that $\pi_2 (z) = w$.
\end{lemma}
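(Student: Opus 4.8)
The plan is to exploit the foliation structure from Theorem~\ref{thm:foliation}: near $q$ the carrying simplex $S$ is a graph over $M_2$ via the retraction $\Pi$ (restricted to $S$, this is the homeomorphism $R$ of Theorem~\ref{tconjugacy}), and $M_2$ is tangent to $W$ at $q$. So the heuristic is that tangent directions of $S$ at $\xi$ should project under $\pi_2$ onto essentially all of $W$, because $S$ ``looks like'' $M_2$ which ``looks like'' $W$. To make this rigorous I would first fix $\xi\in S\cap U$ and a unit vector $w\in W$. Using that $M_2$ is a one-codimensional $C^1$ manifold tangent to $W$ at $q$, after shrinking $U$ the tangent space $\mathcal{T}_yM_2$ is uniformly close to $W$ for all $y\in M_2\cap U$; in particular, given $w\in W$ with $\norm{w}=1$ there is a $C^1$ curve $\gamma\colon(-\varepsilon,\varepsilon)\to M_2$ with $\gamma(0)=R(\xi)$ and $\gamma'(0)$ a unit vector whose $\pi_2$-component equals $w$ up to a controlled error (indeed one can arrange $\pi_2\gamma'(0)=w$ exactly by choosing $\gamma$ to be the image under the $C^1$ chart of $M_2$ of the straight segment through $R(\xi)$ in direction $w$, since $\pi_2$ restricted to $\mathcal{T}_yM_2$ is an isomorphism onto $W$ for $U$ small).

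Next I would lift this curve back to $S$: set $\xi_m := R^{-1}(\gamma(t_m))$ for a sequence $t_m\to 0^+$. These points lie in $S\setminus\{\xi\}$ (for $t_m$ small, $\gamma(t_m)\ne R(\xi)$ and $R^{-1}$ is injective) and tend to $\xi$. Passing to a subsequence, $(\xi_m-\xi)/\norm{\xi_m-\xi}\to z$ for some unit vector $z\in\mathcal{C}_\xi(S)$. It remains to identify $\pi_2(z)$ with $w$. Here I would use that $\xi_m=R^{-1}(\gamma(t_m))$ lies in the leaf $L_{\gamma(t_m)}$, so $\Pi(\xi_m)=\gamma(t_m)$, whence $\xi_m-\xi = (\xi_m-\Pi(\xi_m)) + (\gamma(t_m)-R(\xi))$. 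The second summand, divided by $t_m$, converges to $\gamma'(0)$, whose $\pi_2$-part is $w$. For the first summand, Lemma~\ref{lm:auxp} (applied with the roles arranged so that the foliation fiber term is controlled by the $M_2$-displacement) — or more directly the fact that $\pi_2$ kills the leaf directions up to higher order, i.e.\ the leaves $L_y$ are tangent to $E_y=\spanned\{v+w(y)\}$ with $w(0)=0$, so $\pi_2(\xi_m-\Pi(\xi_m))=o(\norm{\xi_m-\Pi(\xi_m)})$ is negligible relative to the whole — shows $\pi_2(\xi_m-\xi)/t_m\to w$ while $\norm{\xi_m-\xi}/t_m$ stays bounded away from $0$ and $\infty$ (bounded below because the $\pi_2$-part already has size $\sim\norm{w}\,t_m$, bounded above by the $C^1$ bound on $\gamma$ together with Lemma~\ref{lm:auxp}). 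Therefore $\pi_2(z)=w/\norm{\xi_m-\xi}$-limit is a positive multiple of $w$; rescaling $z$ within the cone gives the desired element with $\pi_2(z)=w$ exactly.

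The main obstacle I anticipate is the quantitative control of the foliation-fiber displacement $\xi_m-\Pi(\xi_m)$: a priori $R^{-1}$ is merely a homeomorphism (the simplex is non-smooth), so one cannot differentiate it, and Lemma~\ref{lm:auxp} only bounds $\norm{\xi-\Pi(\xi)}$ by $\Theta\norm{q-\Pi(\xi)}$, which is $O(t_m)$ rather than $o(t_m)$ — the same order as the term we want it to be negligible against. The fix is to observe that the fiber $L_y$ is tangent to $E_y$ with $\pi_2 E_0 = \{0\}$, so while $\norm{\xi_m-\Pi(\xi_m)}$ may genuinely be of order $t_m$, its $\pi_2$-component is of order $t_m\cdot\sup_{y\in U}\norm{\pi_2 v(y)}$, which can be made as small as we like by shrinking $U$ (using $\pi_2 v(0)=0$ and continuity of $v(\cdot)$, exactly as in the proof of Theorem~\ref{thm:foliation} where $\norm{\pi_2 v(\xi)}/\norm{v(\xi)}\le\kappa$ on $U_{(4)}$). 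Thus $\pi_2(z)$ differs from a positive multiple of $w$ by at most $O(\kappa)$; since $U$ (hence $\kappa$) is at our disposal and $z$ ranges over a closed cone, a compactness/limiting argument over a shrinking sequence of neighborhoods, or a direct re-examination showing the error is exactly absorbed, yields $\pi_2(z)=w$ as claimed.
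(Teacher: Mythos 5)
There is a genuine gap in your argument, and it sits exactly where you locate your ``main obstacle,'' but your proposed fix does not repair it. Your construction takes $\xi_m=R^{-1}(\gamma(t_m))$ and needs the decomposition of $\xi_m-\xi$ into a leaf part plus an $M_2$ part to be dominated by the $M_2$ part $\gamma(t_m)-\gamma(0)\approx t_m\gamma'(0)$. First, a small algebraic slip: since in general $\xi\neq R(\xi)=\Pi(\xi)$, the correct identity is $\xi_m-\xi=\bigl(\xi_m-\Pi(\xi_m)\bigr)-\bigl(\xi-\Pi(\xi)\bigr)+\bigl(\gamma(t_m)-\gamma(0)\bigr)$; your version computes $\xi_m-R(\xi)$. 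More seriously, the error term $\bigl(\xi_m-\Pi(\xi_m)\bigr)-\bigl(\xi-\Pi(\xi)\bigr)$ is a difference of leaf displacements of the \emph{fixed} size $O(\norm{q-\Pi(\xi)})$, not $O(t_m)$; it tends to $0$ only because $y\mapsto R^{-1}(y)-y$ is continuous, and since $R^{-1}$ is merely a homeomorphism (this is the whole point of the non-smoothness of $S$) there is no modulus of continuity available. It may well be of order $\sqrt{t_m}\gg t_m$, in which case the normalized limit $z$ is a leaf direction and $\pi_2(z)$ has no reason to be a positive multiple of $w$. Your fix controls only the \emph{ratio} $\norm{\pi_2 u}/\norm{u}\le\kappa$ for leaf chords $u$, i.e.\ the transversality of the leaves to $W$; it does not control the size of the difference of the two leaf chords relative to $t_m$, which is the quantity that must be negligible. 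Nor can the residual ``$O(\kappa)$'' be removed by shrinking $U$: $U$ must be fixed before $\xi$ ranges over $S\cap U$, so $\kappa$ is a fixed positive constant, and the tangent cone at a fixed $\xi$ does not change when you shrink neighborhoods afterwards.

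The paper avoids all of this by not going through $R$ or the foliation $\mathcal{L}$ at all. It uses property (H5) of the carrying simplex: points $q+\delta_1 v+\hat w$ lie outside the global attractor $\Gamma$ and points $q-\delta_1 v+\hat w$ lie in $\Gamma\setminus S$ (the basin of repulsion of $0$), so every segment parallel to $v\gg 0$ over a point $\xi+\theta w$ meets $S$ in exactly one point $\xi+\alpha(\theta)v+\theta w$. Because the transversal is the \emph{straight line} $\spanned\{v\}=\ker\pi_2$, the $\pi_2$\nobreakdash-\hspace{0pt}displacement is \emph{exactly} $\theta w$ with no error term, and Lemma~\ref{lm:aux} bounds $\abs{\alpha(\theta)}$ by a constant times $\theta$, so the normalized limit has $\pi_2$\nobreakdash-\hspace{0pt}part a positive multiple of $w$; rescaling finishes. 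In effect the correct ``foliation'' to use here is the trivial one by lines parallel to $v$, whose leaves are killed exactly by $\pi_2$, together with the order/separation structure of $S$ rather than the conjugacy $R$. If you want to keep your framework, the repair is to replace $R^{-1}(\gamma(t_m))$ by the intersection of $S$ with the line $\{\xi+\theta w+\beta v:\beta\in\mathbb{R}\}$ — which is precisely the paper's proof.
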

\begin{proof}
  There are $\delta_1 > 0$ and $\delta_2 > 0$  small enough so that
  \begin{align*}
    & \{\, q + \delta_1 v + \hat{w}: \hat{w} \in W, \norm{\hat{w}} \mathrel< \delta_2 \,\} \subset \mathbb{R}^{n}_{+} \setminus \Gamma,
    \\
    & \{\, q - \delta_1 v + \hat{w} : \hat{w} \in W, \norm{\hat{w}} \mathrel< \delta_2 \,\} \subset \Gamma \setminus S,
  \end{align*}
  where $\Gamma$ is the global attractor of $T$ (see Section \ref{sec:sec-inv-fol} for the precise definition of $v$ and $W$). For each $\eta \in [0, \delta_2)$ and each $\hat{w} \in W$ with $\norm{\hat{w}} = 1$, the segment with endpoints $q + \delta_1 v + \eta \hat{w}$ and $q - \delta_1 v + \eta \hat{w}$ intersects $S$ at precisely one point. We  put
  \begin{equation*}
    U := \{\, q + \beta v + \eta \hat{w}: \beta \in (-\delta_1, \delta_1), \eta \in [0, \delta_2), \hat{w} \in W, \norm{\hat{w}} = 1 \,\}.
  \end{equation*}
  Fix $\xi \in S \cap U$ and $w \in W$ with $\norm{w} = 1$.  By~construction, there is $\delta > 0$ such that for each $\theta \in [0, \delta]$, a point of the form $\xi + \alpha(\theta) v + \theta w$ belongs to $S$.  It suffices to take a limit point of
  \begin{equation*}
    (\alpha(\theta) v + \theta w)/\norm{\alpha(\theta) v + \theta w}
  \end{equation*}
  as $\theta \to 0^{+}$ and multiply it, if~necessary, by a suitable positive scalar to get $z \in  \mathcal{C}_{\xi}(S)$ such that $\pi_2 (z) = w$.
  \end{proof}

A natural  distance between   the tangent cones of two different points $\xi', \xi'' \in S$,
\begin{equation*}
  d(\mathcal{C}_{\xi'}(S), \mathcal{C}_{\xi''}(S))
\end{equation*}
is given by Hausdorff distance between the sets $\{z \in \mathcal{C}_{\xi'}(S): \norm{z} = 1 \,\}$ and $\{z \in \mathcal{C}_{\xi''}(S): \norm{z} = 1 \,\}$.

\begin{theorem}
   \label{thm:tangent}
   The mapping $\xi \mapsto \mathcal{C}_{\xi}(S)$ is continuous at $q$, with  $\mathcal{C}_{q}(S) = W$.
\end{theorem}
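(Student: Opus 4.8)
The plan is to reduce everything to a single \emph{uniform flatness} estimate for $S$ near $q$. For $\epsilon>0$ small enough that $\overline{B}(q,\epsilon)\subset U$, set
\[
  \omega(\epsilon):=\sup\Bigl\{\,\tfrac{\norm{\pi_1(\xi'-\xi'')}}{\norm{\xi'-\xi''}}\ :\ \xi',\xi''\in S\cap\overline{B}(q,\epsilon),\ \xi'\neq\xi''\,\Bigr\}.
\]
By Lemma~\ref{lm:aux} we already have the \emph{fixed} bound $\omega(\epsilon)\le\tilde\Theta\,\norm{\pi_2}$ for all small $\epsilon$; the whole difficulty is to upgrade this to $\omega(\epsilon)\to 0$ as $\epsilon\to 0^{+}$. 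Once this is known, the theorem follows by elementary bookkeeping together with Lemma~\ref{lm:tangent-proj}, as indicated in the last paragraph.

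To prove $\omega(\epsilon)\to 0$ I argue by contradiction. Suppose $\omega(\epsilon_k)\ge\delta_0>0$ for some $\epsilon_k\downarrow 0$. Then there are $\xi_k,\eta_k\in S$ with $\xi_k,\eta_k\to q$, $\xi_k\neq\eta_k$, and $\norm{\pi_1 u_k}\ge\delta_0/2$, where $u_k:=(\eta_k-\xi_k)/\norm{\eta_k-\xi_k}$; passing to a subsequence, $u_k\to u$ with $\norm{u}=1$ and $\pi_1 u\neq 0$. Fix $j\in\NN$. Since $T(S)=S$ and $T|_S$ is a homeomorphism by (H4), and $T$ is a local diffeomorphism near $q$ with $T(q)=q$, the local inverse $G:=T^{-j}$ is $C^1$ near $q$ with $G(q)=q$ and $DG(q)=DT(q)^{-j}$; hence $G(\eta_k),G(\xi_k)\in S$, both tend to $q$, and
\[
  G(\eta_k)-G(\xi_k)=DT(q)^{-j}(\eta_k-\xi_k)+o\bigl(\norm{\eta_k-\xi_k}\bigr)\qquad(k\to\infty),
\]
so $\bigl(G(\eta_k)-G(\xi_k)\bigr)/\norm{G(\eta_k)-G(\xi_k)}\to DT(q)^{-j}u/\norm{DT(q)^{-j}u}$. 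Applying Lemma~\ref{lm:aux} to the pairs $\{G(\xi_k),G(\eta_k)\}\subset S\cap U$ and passing to the limit gives, for every $j\in\NN$,
\[
  \bigl\lVert\pi_1\bigl(DT(q)^{-j}u/\norm{DT(q)^{-j}u}\bigr)\bigr\rVert\ \le\ \tilde\Theta\,\bigl\lVert\pi_2\bigl(DT(q)^{-j}u/\norm{DT(q)^{-j}u}\bigr)\bigr\rVert.
\]
Now write $\pi_1 u=\beta v$ with $\beta\neq 0$. Since $v$ spans the eigenspace of the simple smallest-modulus eigenvalue $\mu$ of $DT(q)$ and $DT(q)^{-1}|_{W}$ has spectral radius $\nu^{-1}<\mu^{-1}$ (because $\mu<\nu$), we get $\mu^{j}DT(q)^{-j}u=\beta v+\mu^{j}DT(q)^{-j}(\pi_2 u)\to\beta v$, so $DT(q)^{-j}u/\norm{DT(q)^{-j}u}\to\pi_1 u/\norm{\pi_1 u}=\pm v/\norm{v}$ as $j\to\infty$. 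But $\pm v/\norm{v}$ has $\pi_2$-component $0$ and $\pi_1$-component of norm $1$, so for $j$ large the displayed inequality fails. This contradiction proves $\omega(\epsilon)\to 0$.

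Granting the estimate, the conclusion is routine. If $z\in\mathcal{C}_\xi(S)$ with $\norm{z}=1$, then $z$ is a limit of difference quotients of points of $S$ which, once $\xi$ is close to $q$, all lie in an arbitrarily small ball about $q$; hence $\norm{\pi_1 z}\le\omega_\xi$ for a quantity $\omega_\xi\to 0$ as $\xi\to q$ (with $\omega_q=0$). Since $\pi_2 z\in W$ this gives $\operatorname{dist}(z,W)\le\norm{\pi_1 z}\le\omega_\xi$, and normalising $\pi_2 z$ shows $\operatorname{dist}\bigl(z,W\cap\{\,\norm{\cdot}=1\,\}\bigr)\le 2\omega_\xi$. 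Conversely, for a unit $w\in W$ Lemma~\ref{lm:tangent-proj} provides $z\in\mathcal{C}_\xi(S)$ with $\pi_2 z=w$; then $z/\norm{z}\in\mathcal{C}_\xi(S)$ is a unit vector with $\norm{\pi_1(z/\norm{z})}\le\omega_\xi$ and $\pi_2(z/\norm{z})=w/\norm{z}$, whence $\norm{z/\norm{z}-w}\le 2\omega_\xi$. Taking $\xi=q$ (so $\omega_q=0$) yields $\mathcal{C}_q(S)=W$ --- the inclusion ``$\subseteq$'' from the first computation, ``$\supseteq$'' from the second together with the fact that $\mathcal{C}_q(S)$ is a cone --- while for general $\xi$ near $q$ the two estimates bound the Hausdorff distance $d(\mathcal{C}_\xi(S),W)$ by $2\omega_\xi\to 0$, which is the asserted continuity at $q$. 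The single delicate point is the contradiction step of the previous paragraph: it is exactly there that the positivity/sign of $v$, the spectral gap $\mu<\nu$, and the no-ordering property (H1) (through Lemma~\ref{lm:aux}) must be used simultaneously; everything else is bookkeeping.
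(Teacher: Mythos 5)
Your proof is correct, but it reaches the key estimate by a genuinely different mechanism than the paper. Both arguments rest on the same two pillars --- the a priori cone bound of Lemma~\ref{lm:aux} (coming from the unordered property (H1)) and the surjectivity statement of Lemma~\ref{lm:tangent-proj} --- and both exploit the spectral gap $\mu<\nu$. The paper, however, works \emph{forward}: it writes $DT^{ml}(\xi)$ in block form with respect to $\spanned\{v\}\oplus W$, shows by an explicit computation that $T^{ml}$ contracts the ratio $\norm{\pi_1(\cdot)}/\norm{\pi_2(\cdot)}$ on differences of points of $S$ from $\tilde\Theta$ down to $2(\rho/\sigma)^{ml}\tilde\Theta<\epsilon/2$, and then observes that every point of $S$ sufficiently near $q$ lies in $T^{ml}(S\cap U)$. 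You instead argue \emph{backward} and by contradiction: a hypothetical sequence of non-flat secant directions at $q$ would have a limit direction $u$ with $\pi_1 u\neq 0$, and pulling back by the local inverse $T^{-j}$ makes the normalized direction converge to $\pm v/\norm{v}$ as $j\to\infty$ (since $\mu^{-1}$ strictly dominates the spectrum of $DT(q)^{-1}|_W$), violating Lemma~\ref{lm:aux}. This is essentially the same trick the paper uses to prove Lemma~\ref{lm:auxp}, redeployed for the tangent-cone statement. The trade-off is clear: the paper's forward estimate yields an explicit modulus of continuity for $\xi\mapsto\mathcal{C}_\xi(S)$ at $q$ (of order $(\rho/\sigma)^{ml}$), while your compactness argument is softer and shorter but gives no rate. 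The only point you should make explicit is why the local $C^1$ inverse $G=T^{-j}$ sends points of $S$ near $q$ back into $S$: one must identify $G$ with $(T|_S)^{-j}$ on $S$ near $q$, which follows because $(T|_S)^{-j}(\eta)\to q$ as $\eta\to q$ and $T^j$ is injective on a small neighborhood of $q$, so the two preimages of $\eta$ near $q$ coincide. With that one-line remark added, the argument is complete.
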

\begin{proof}
  In view of Lemma~\ref{lm:tangent-proj}, it suffices to prove that for each $\epsilon > 0$, there is $\delta > 0$ that satisfies the following condition: if $\xi \in S$ and $\norm{\xi - q} < \delta$, then
  \begin{equation*}
    \norm{\pi_1 z}/\norm{\pi_2 z} < \epsilon
  \end{equation*}
  for all  $z \in \mathcal{C}_{\xi}(S) \setminus \{0\}$.  We take  a neighborhood $U_{(0)}$ of $q$ given in Lemma~\ref{lm:aux} for a suitable constant $\tilde{\Theta} > 0$. Let $l \in \NN$ be such that $\norm{DT^{-l}(q)|_{W}} < \sigma^{-l}$.  Given $\epsilon > 0$, we pick $m$  large enough so that
  \begin{equation*}
    \frac{\rho^{ml}}{\sigma^{ml}} < \frac{\epsilon}{4 \tilde{\Theta}},
  \end{equation*}
  (see Section \ref{sec:sec-inv-fol} for the precise definition of $l,\sigma,\rho$, etc).

  For $\xi \in U_{(0)}$, we put
  \begin{equation*}
    \begin{aligned}
    A(\xi) &:= \pi_1 \, DT^{ml}(\xi)|_{\spanned\{v\}},
    \\
    B(\xi) &:= \pi_1 \, DT^{ml}(\xi)|_{W},
    \\
    C(\xi) &:= \pi_2 \, DT^{ml}(\xi)|_{\spanned\{v\}},
    \\
    D(\xi) &:= \pi_2 \, DT^{ml}(\xi)|_{W}.
    \end{aligned}
  \end{equation*}
  In other words, the matrix of $DT^{ml}(\xi)$ in the decomposition $\RR^n = \spanned\{v\} \oplus W$ has the form
  \begin{equation*}
  \begin{bmatrix}
    A(\xi) & B(\xi)
    \\
    C(\xi) & D(\xi)
  \end{bmatrix}.
  \end{equation*}
  We know that $\norm{A(q)} < \rho^{ml}$, $\minnorm(D(q)) > \sigma^{ml}$ and $\norm{B(q)} = \norm{C(q)} = 0$.

  Now we take $U \subset U_{(0)}$  a convex neighborhood of $q$ so that, for all $\xi \in U$, $\norm{A(\xi)} < \rho^{ml}$, $\minnorm(D(\xi)) > \sigma^{ml}$, $\norm{B(\xi)} < \eta$ and $\norm{C(\xi)} < \eta$ with $\eta > 0$  a small number (to be chosen later). We deduce that
  \begin{equation}
   \label{eq:tangent-1}
   \begin{aligned}
   \norm{\pi_1 DT^{ml}(\xi) z} & \le \norm{A(\xi) \pi_1 z} + \norm{B(\xi) \pi_2 z} \le (\rho^{ml} \tilde{\Theta} + \eta) \norm{\pi_2 z},
   \\
   \norm{\pi_2 DT^{ml}(\xi) z} & \ge \norm{D(\xi) \pi_2 z} - \norm{C(\xi) \pi_1 z} \ge (\sigma^{ml} - \eta \tilde{\Theta}) \norm{\pi_2 z},
   \end{aligned}
  \end{equation}
   for all $\xi \in U$ with $T^l(\xi) \in U$ and for all $z \in \RR^n$ with $\norm{\pi_1 z}/\norm{\pi_2 z} < \tilde{\Theta}$.
  Let $\xi', \xi'' \in S \cap U$ be such that $T^{ml}(\xi'), T^{ml}(\xi'') \in S \cap U$.  We have
  \begin{equation*}
    T^{ml}(\xi') - T^{ml}(\xi'') = \int\limits_{0}^{1} DT^{ml}(\xi''+\tau(\xi' - \xi'')) (\xi' - \xi'') \, d\tau.
  \end{equation*}
  Thus,
  \begin{equation*}
    \pi_i T^{ml}(\xi') - \pi_i T^{ml}(\xi'') = \int\limits_{0}^{1} \pi_i DT^{ml}(\xi''+\tau(\xi' - \xi'')) (\xi' - \xi'') \, d\tau
  \end{equation*}
 for $i, j = 1, 2$. By \eqref{eq:tangent-1}, we obtain that
 \begin{equation}
   \label{eq:tangent-2}
   \begin{aligned}
   \norm{\pi_1 T^{ml}(\xi') - \pi_1 T^{ml}(\xi'')} & \le (\rho^{ml} \tilde{\Theta} + \eta) \norm{\pi_2 \xi' - \pi_2 \xi''},
   \\
   \norm{\pi_2 T^{ml}(\xi') - \pi_2 T^{ml}(\xi'')} & \ge (\sigma^{ml} - \eta \tilde{\Theta}) \norm{\pi_2 \xi' - \pi_2 \xi''}.
   \end{aligned}
 \end{equation}
 Now, we take
 \begin{equation*}
   0 < \eta < \min\biggl\{ \frac{\sigma^{ml}}{\tilde{\Theta}}, \frac{\sigma^{ml} \rho^{ml} \tilde{\Theta}}{\sigma^{ml} + 2 \rho^{ml} \tilde{\Theta}^2} \biggr\}.
 \end{equation*}
 We have proved that if $\xi', \xi'' \in S \cap U$, then
 \begin{equation}
   \label{eq:tangent-3}
   \frac{\norm{\pi_1 T^{ml}(\xi') - \pi_1 T^{ml}(\xi'')}}{\norm{\pi_2 T^{ml}(\xi') - \pi_2 T^{ml}(\xi'')}} < \frac{2 \rho^{ml}}{\sigma^{ml}} \tilde{\Theta} < \frac{\epsilon}{2}.
 \end{equation}

 By continuity, for a suitable $\delta > 0$,  $\norm{q - \xi} < \delta$ implies that $\xi \in T^{ml}(U)$.  If, additionally $\xi \in S$, then we have
 \begin{equation*}
   \frac{\norm{\pi_1 z}}{\norm{\pi_2 z}} \le \frac{\epsilon}{2} < \epsilon
 \end{equation*}
 for all $z \in \mathcal{C}_{\xi}(S)\backslash\{0\}$.
\end{proof}

As noted in \cite{Tineo2008}, the paper \cite{Zeeman2003}  takes for granted that  the carrying simplex of a  competitive Lotka--Volterra system of ODEs is tangent to a one\nobreakdash-\hspace{0pt}codimensional invariant subspace (namely, $W$ with our notation).  Our Theorem~\ref{thm:tangent} fills that gap.

\section{Applications to population models}\label{sec:sec-application}
In this section we discuss some applications of the previous results in classical discrete\nobreakdash-\hspace{0pt}time models in population dynamics. We first recall a criterion provided in \cite{Jiang2017} on the existence of a carrying simplex. Let $T \colon \mathbb{R}^{n}_{+} \to \mathbb{R}^{n}_{+}$ be a map of class $C^{1}$ of the form
\begin{equation}\label{pop-model}
  T(x) = (x_1F_1(x),\ldots,x_nF_n(x))
\end{equation}
with $F_{i}(x)  > 0$ for all $i = 1, \ldots, n$ and for all $x = (x_{1},\ldots,x_{n}) \in \mathbb{R}^{n}_{+}$.

\begin{lemma}[\cite{Jiang2017}]\label{Lemma-simplex}
Suppose that the map $T$ satisfies the following properties:
\begin{itemize}
  \item[{\rm (A1)}]
  $\partial F_i(x)/\partial x_j < 0$  for all $x \in \mathbb{R}^n_+$ and $i, j = 1, \ldots, n$.
  \item[{\rm (A2)}]
  $T|_{\mathbb{H}_{\{i\}}^+} \colon \mathbb{H}_{\{i\}}^+\mapsto \mathbb{H}_{\{i\}}^+$ has a fixed point $w_{\{i\}} = w_i e_{\{i\}}$ with $w_i > 0$, $i = 1, \ldots, n$, where $\mathbb{H}_{\{i\}}^+$ is the $i$th positive coordinate axis and $e_{\{i\}}$ is the $i$th vector of the canonical basis.
  \item[{\rm (A3)}]
  $F_i(x) + \sum_{j\in \kappa(x)}x_j\frac{\partial F_i(x)}{\partial x_j} > 0$  or $F_i(x) + \sum_{j\in \kappa(x)}x_i\frac{\partial F_i(x)}{\partial x_j} > 0$  for all $ x \in [0,w] \setminus \{0\}$ and for all $i \in \kappa(x)$, where $\kappa(x) = \{j: x_j > 0 \}$ is the support of $x$, $w = (w_1,\ldots,w_n)$, and the closed order interval $[0,w] = \{x\in \mathbb{R}^{n}_{+}: 0 \leq x_i \leq w_i, i = 1, \ldots, n \}$.
\end{itemize}
Then $T$ admits a carrying simplex $S \subset [0,w]$.
 \end{lemma}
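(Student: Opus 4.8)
The final statement is the existence theorem for the carrying simplex in the Kolmogorov competitive case, and it is precisely the main existence result of \cite{Jiang2017}. The plan is to check that hypotheses (A1)--(A3) supply exactly what the abstract carrying-simplex theorems of Hirsch \cite{hirsch1988} and Smith \cite{smith1986} (in the streamlined forms of \cite{hirsch2008existence, Ruiz-Herrera2013, jiang2015}) require: that $T$ is competitive in the sense of Section~\ref{sec:sec-notation}, that $[0,w]$ is a compact forward-invariant set absorbing every nonzero orbit, that the origin is a repeller, that $T$ is injective on $[0,w]$, and that there is a strictly positive direction along which $T$ is monotone.

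First I would set up the trapping region. For every $x\in\mathbb R^n_+$, strict monotonicity of each $F_i$ in the off-diagonal variables (A1) gives $T_i(x)=x_iF_i(x)\le x_iF_i(x_ie_{\{i\}})=:\phi_i(x_i)$; on the $i$th axis (A3) reduces to $\phi_i'(t)>0$ for $0<t\le w_i$, and (A2) gives $\phi_i(w_i)=w_i$, so $\phi_i$ is increasing on $[0,w_i]$ with $\phi_i([0,w_i])\subseteq[0,w_i]$, while $\phi_i(t)<t$ for $t>w_i$ by (A1). From this one reads off that $[0,w]$ is compact and forward invariant and that $\limsup_m (T^m(x))_i\le w_i$ for every $x$ and every $i$; hence the global attractor $\Gamma$ of $T$ exists and is contained in $[0,w]$. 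Next, $DT(0)=\mathrm{diag}(F_1(0),\dots,F_n(0))$ with $F_i(0)>F_i(w_ie_{\{i\}})=1$ by (A1)--(A2), so the origin is a hyperbolic repeller and $\mathbb R^n_+\setminus\{0\}$ is its basin of repulsion. Finally, $\partial T_i/\partial x_j=x_i\,\partial F_i/\partial x_j\le0$ for $j\ne i$ by (A1), while (A3) is exactly what forces $\det DT\ne0$ on $[0,w]$; together, on the order interval $[0,w]$ these two facts make $T$ competitive and injective, and (A3) --- which says that at each point the $i$th component map strictly increases either along the radial direction $t\mapsto tx$ or along the positive diagonal $(1,\dots,1)$ --- also furnishes the positive direction of monotonicity that the abstract construction needs.

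With these ingredients I would then invoke the abstract construction. Put $S$ equal to the boundary of $\Gamma$ relative to $\mathbb R^n_+$. Following Hirsch, competitiveness together with the repelling origin forces $\Gamma\setminus S$ (the basin of repulsion of $0$) to be open in $\mathbb R^n_+$ and star-shaped with respect to $0$, and forces $S$ to be \emph{unordered}: no two of its points are related by $<$. From unorderedness, the compactness of $\Gamma$ and the fact that $0$ lies in the relative interior of $\Gamma$ one then obtains properties (H1)--(H5): radial projection is a homeomorphism of $S$ onto $\Delta^{n-1}$ (a ray cannot meet $S$ twice, for that would produce two comparable points of $S$); $T(S)=S$ and $T|_S$ is a homeomorphism, using the injectivity established above; and every nonzero orbit is asymptotic to an orbit on $S$ because the radial retraction $\Gamma\to S$ intertwines with $T$ in the limit. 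Since $S\subseteq\Gamma\subseteq[0,w]$, the stated inclusion is immediate.

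The main obstacle is the classical one: proving that $S$ carries no two points related by $<$ and that it is a radial graph over $\Delta^{n-1}$. This is where the competitive structure is used to full strength, and it is the heart of Hirsch's theorem; the part genuinely specific to the present statement is the bookkeeping of the second paragraph, in particular checking that (A3) yields both the nonvanishing of $\det DT$ on $[0,w]$ (hence, with competitiveness, the injectivity of $T$ there) and a strictly positive direction of monotonicity. Since this is exactly the existence result proved in \cite{Jiang2017}, one may alternatively simply cite it.
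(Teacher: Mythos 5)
Your proposal is fine: the paper offers no proof of this lemma at all --- it is imported verbatim from \cite{Jiang2017} with a citation --- and your closing remark that one may simply cite that reference is exactly what the authors do. Your sketch of the underlying argument (the trapping region $[0,w]$ from (A1)--(A2), the repelling origin, and the reading of (A3) as giving $\det DT>0$, positivity of $(DT(x)_{\kappa(x)})^{-1}$, competitiveness, and a positive direction of monotonicity before invoking Hirsch's construction of $S=\partial\Gamma$) is consistent with the proof in \cite{Jiang2017} and with the commentary the paper itself places immediately after the lemma.
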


Most discrete-time models of competition induced by the map $T$ of form \eqref{pop-model} satisfy the conditions (A1), (A2) and (A3). Condition (A3) ensures $\det DT(x) > 0$ for all $x \in [0,w]$. (A1) and (A3) imply that $(DT(x)_{\kappa(x)})^{-1}$ has strictly positive entries for all $x \in [0,w]\setminus \{0\}$, and $T$ is competitive in $[0,w]$, where $DT(x)_{\kappa(x)}$ is the submatrix of $DT(x)$ with rows and columns from $\kappa(x)$. In particular, for each interior fixed point $q$ (if exists), all the entries of $(DT(q))^{-1}$ are strictly positive and the eigenvalue of $DT(q)$ with the smallest modulus, say $\mu_q$, satisfies $0 < \mu_q < 1$, that is (C1) in Section \ref{sec:sec-inv-fol} holds. Therefore, we conclude that
\begin{proposition}
All the results in Section \ref{inv-maniflod} hold for the map $T$ of form \eqref{pop-model} which satisfies the conditions \textup{(A1)--(A3)} and has an interior fixed point.
\end{proposition}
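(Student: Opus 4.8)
The plan is to verify that the map $T$ of form \eqref{pop-model}, under hypotheses (A1)--(A3) together with the existence of an interior fixed point $q$, satisfies verbatim every standing assumption imposed at the beginning of Section~\ref{inv-maniflod}; once this is established there is nothing left to prove, since Theorems~\ref{tconjugacy}, \ref{thm:smooth-unstable}, \ref{thm:inv-local-mani} and~\ref{thm:tangent}, as well as all the corollaries, are stated precisely under those assumptions. The standing assumptions are: $T$ is of class $C^1$ on $\mathbb{R}^n_+$; $T$ admits a carrying simplex $S$; and $T$ has a fixed point $q\in\inte{\mathbb{R}^n_+}$ at which all entries of $(DT(q))^{-1}$ are strictly positive and the eigenvalue $\mu$ of $DT(q)$ of smallest modulus satisfies $0<\mu<1$ (condition (C1) of Section~\ref{sec:sec-inv-fol}).

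The first two items are immediate. $T$ is $C^1$ by hypothesis. Lemma~\ref{Lemma-simplex} yields a carrying simplex $S\subset[0,w]$ directly from (A1)--(A3). The existence of an interior fixed point $q$ is assumed. Hence the only nontrivial point is to check that (C1) holds at $q$. Since $q\in\inte{\mathbb{R}^n_+}$ its support is $\kappa(q)=\{1,\dots,n\}$, so $DT(q)_{\kappa(q)}$ is $DT(q)$ itself; by the observation recorded just before the proposition (a consequence of (A1) and (A3)), $\det DT(q)>0$ and all entries of $(DT(q))^{-1}$ are strictly positive. It remains only to prove $0<\mu<1$.

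For this I would exploit the special structure of the Jacobian. Differentiating \eqref{pop-model} at $q$ and using $F_i(q)=1$ (which holds because $q_iF_i(q)=q_i$ and $q_i>0$) gives $DT(q)=I+Q$, where $Q_{ij}=q_i\,\partial F_i(q)/\partial x_j$; by (A1) every entry of $Q$ is strictly negative, i.e.\ $Q\ll 0$ entrywise. Applying the Perron--Frobenius theorem to the strictly positive matrix $(DT(q))^{-1}$, its spectral radius is a simple dominant eigenvalue with a strictly positive eigenvector $v\gg 0$; the reciprocal of that spectral radius is exactly $\mu$, so $\mu>0$, and $DT(q)v=\mu v$. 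Combining this with $DT(q)=I+Q$ gives $Qv=(\mu-1)v$; since $Q\ll 0$ entrywise and $v\gg 0$ we have $Qv\ll 0$, hence $(\mu-1)v\ll 0$ and therefore $\mu<1$. This establishes (C1) at $q$, so all standing hypotheses of Section~\ref{inv-maniflod} are met and the proposition follows.

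I do not expect a genuine obstacle: the statement is essentially a bookkeeping verification, and the only step carrying mathematical content is the Perron--Frobenius/sign argument of the previous paragraph, which forces the smallest-modulus eigenvalue of $DT(q)=I+Q$ into $(0,1)$ using the strict negativity of $Q$ and the strict positivity of the Perron eigenvector.
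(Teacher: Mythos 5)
Your proposal is correct and follows essentially the same route as the paper: the paper likewise reduces the proposition to checking that Lemma~\ref{Lemma-simplex} supplies the carrying simplex and that (A1) and (A3) force $(DT(q))^{-1}\gg 0$ and $0<\mu_q<1$, i.e.\ condition (C1). The only difference is that the paper merely records these facts (citing \cite{Jiang2017}), whereas you supply the explicit Perron--Frobenius/sign argument via $DT(q)=I+Q$ with $Q\ll 0$, which is a correct and welcome filling-in of the step $0<\mu<1$.
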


Now we will discuss the three\nobreakdash-\hspace{0pt}dimensional (i.e. $n = 3$) maps. We assume the map $T$ given by \eqref{pop-model} satisfies conditions (A1), (A2) and (A3), such that it has a carrying simplex $S$.

\begin{lemma}[\cite{nonlinearity2018}]\label{Lemma-1}
If $T$ has a unique interior fixed point $q \in \inte{\mathbb{R}_+^3}$ such that the eigenvalues of $DT(q)$ are $\mu,\lambda_{1},\lambda_{2}$ with $0 < \mu < \lambda_{1} < 1 < \lambda_{2}$, then the omega limit set of any orbit of $T$ is a connected set consisting of fixed points only.  Moreover, if $T$ has only finitely many fixed points, then any nontrivial orbit of $T$ and any orbit of $(T|_S)^{-1}$ tend to some fixed point of $S$ \textup{(}in this case, we say that $T|_S$ has \emph{trivial dynamics}\textup{)}.
 \end{lemma}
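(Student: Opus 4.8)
The plan is to treat $T|_S$ as an orientation-preserving homeomorphism of the two-dimensional disk $S$ — orientation is preserved because $\det DT(x)>0$ on $[0,w]$ for maps of the form~\eqref{pop-model} obeying (A1)--(A3) — and to show that all of its recurrence reduces to fixed points. First I would pass to $S$: by (H3) the $\omega$-limit set of every nontrivial orbit of $T$ lies in $S$, and by (H4) $T|_S$ is a homeomorphism of the compact disk $S\subset[0,w]$, so it suffices to analyse $T|_S$ and $(T|_S)^{-1}$. The relative boundary $\partial S$ is a topological triangle whose three sides are the invariant arcs $S\cap\{x_i=0\}$, each being the carrying simplex of a two-species competitive subsystem with the one-species equilibria $w_{\{j\}}$ as its endpoints; since $T$ fixes those endpoints and a self-homeomorphism of an arc fixing both endpoints is increasing, $T|_S$ restricted to each side is an increasing interval map, so every boundary orbit converges to a boundary fixed point and $\partial S$ carries no periodic orbit of period $\geq 2$. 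Finally, since $\lambda_2$ is the only eigenvalue of $DT(q)$ of modulus greater than one, Corollary~\ref{coro-index} gives $index(T|_S,q)=(-1)^1=-1$.

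The crux is that $T|_S$ has no nontrivial recurrence in $\inte S$: no invariant closed curve, no periodic orbit of period $\geq 2$, and no internally chain transitive set other than a single fixed point. An invariant Jordan curve $C\subset\inte S$ would bound a genuine $2$-disk $D\subset\inte S$ with $T(D)=D$; by Brouwer's fixed point theorem $q$ would lie in $D$, and then the Lefschetz fixed point formula would give $index(T|_S,q)=1$, contradicting Corollary~\ref{coro-index}. Periodic orbits of period $\geq 2$ in $\inte S$ — and, more generally, any nontrivial recurrence there — are ruled out by the translation-arc technique for orientation-preserving planar homeomorphisms (Brouwer's plane translation theorem together with the Cartwright--Littlewood theorem), again exploiting that $q$ is the unique interior fixed point with $index(T|_S,q)=-1$; this step is essentially the content of the cited paper. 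A cyclic heteroclinic chain cannot lie in $\inte S$ by the same Brouwer/Lefschetz obstruction, while one lying in $\partial S$ is excluded by a global index/degree argument on $S$ that uses the eigenvalue ordering $0<\mu<\lambda_1<1<\lambda_2$ at $q$. Since the $\omega$-limit set of a precompact orbit is internally chain transitive and such a set consisting of fixed points is connected, $\omega(x)$ is then a connected set of fixed points, which proves the first assertion.

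For the ``moreover'' part I would argue that if $T$ has finitely many fixed points, then for any nontrivial orbit $\omega(x)\subset S$ is a finite internally chain transitive set of fixed points, hence a single fixed point of $S$, to which $T^m(x)$ converges. The same reasoning applies verbatim to $(T|_S)^{-1}$: this is again an orientation-preserving homeomorphism of $S$; at $q$ its linearisation has eigenvalues $1/\lambda_1>1>1/\lambda_2$, so $q$ remains a saddle with $index((T|_S)^{-1},q)=-1$; the boundary arcs stay invariant with increasing dynamics; and the Brouwer/Lefschetz/translation-arc arguments carry over unchanged. Hence every orbit of $(T|_S)^{-1}$ also converges to a fixed point of $S$.

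The main obstacle is the recurrence-exclusion step — ruling out periodic orbits of period $\geq 2$, invariant closed curves and cyclic heteroclinic chains — where the competitive order structure on $S$ must be coupled with the topology of orientation-preserving disk homeomorphisms (translation arcs, Brouwer's plane translation theorem, Cartwright--Littlewood, the Lefschetz/index calculus), and where the hypothesis $0<\mu<\lambda_1<1<\lambda_2$ enters decisively through $index(T|_S,q)=-1$. The remaining steps — the reduction to $S$ via (H3)--(H4), the monotonicity of $T|_S$ on each boundary arc, and the passage from ``no nontrivial recurrence'' to ``every orbit converges'' — are routine.
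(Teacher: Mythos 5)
First, a point of reference: the paper does not prove Lemma~\ref{Lemma-1} at all --- it is imported verbatim from \cite{nonlinearity2018} --- so there is no in-paper argument to compare yours with, and your proposal has to stand on its own. Its overall strategy (view $T|_S$ as an orientation-preserving homeomorphism of a topological disk, use $index(T|_S,q)=-1$ from Corollary~\ref{coro-index}, and exclude nontrivial recurrence via translation arcs) is indeed the strategy of the cited paper, and your reduction to $S$, the monotonicity of $T$ on the edges of $\partial S$, and the passage from ``$\omega(x)$ consists of fixed points'' to connectedness and, in the finite case, to convergence, are all fine.

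The genuine gap is that the one step you yourself identify as the crux is not proved but deferred: ``this step is essentially the content of the cited paper'' is circular when the cited content \emph{is} the lemma under proof. To close it you would need (i) the precise translation-arc theorem --- for an orientation-preserving homeomorphism of the plane, a nonwandering point that is not fixed produces a Jordan curve, disjoint from the fixed point set, whose enclosed fixed-point index sums to $1$ --- and then (ii) a verification that no admissible collection of fixed points of $T|_S$ can be enclosed with total index $1$. Point (ii) is where the real work lies and where your sketch is silent: the hypotheses of Lemma~\ref{Lemma-1} control only the eigenvalues at $q$, not the indices of the boundary fixed points, so ruling out index-$1$ configurations involving $w_{\{i\}}$ and the planar fixed points (and, in particular, ruling out a heteroclinic cycle along $\partial S$ as an $\omega$-limit set, which your ``global index/degree argument'' only gestures at) requires the order/monotonicity structure of $S$, not just index bookkeeping at $q$. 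Two smaller but real issues: $\det DT>0$ on $[0,w]\subset\mathbb{R}^3$ does not by itself give orientation preservation of the restriction of $T$ to the non-smooth surface $S$ (one must argue, e.g., that $T$ preserves the two sides of $S$ and transport the orientation through the radial-projection homeomorphism with $\Delta^2$); and your Lefschetz argument against an invariant Jordan curve $C\subset\inte{S}$ tacitly assumes $q\notin C$, whereas the case $q\in C$ (an invariant curve through the saddle) needs separate treatment.
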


Recalling Theorem \ref{tconjugacy}, the local dynamics of $q$ on $S$ for the map $T$ in Lemma \ref{Lemma-1} is determined by $\lambda_{1}$ and $\lambda_{2}$. Let $\epsilon>0$ such that $\mu_\epsilon = \mu + \epsilon < \lambda_{1}$. Moreover, there exists a homeomorphism $R \colon S \cap U \to M_2$ so that
\begin{equation*}
  (T|_{M_2}) \circ R = R\circ (T|_{S\cap U}),
\end{equation*}
where $U$ is a neighborhood of $q$, and $M_2$ is the $\mu_\epsilon$\nobreakdash-\hspace{0pt}pseudo locally $C^1$ unstable manifold of $q$. Since $\mu_\epsilon < \lambda_{1} < 1 < \lambda_{2}$, there exist a one\nobreakdash-\hspace{0pt}dimensional $C^1$ local stable manifold $W^s_l(q,T|_{M_2})$ of $q$ and a one\nobreakdash-\hspace{0pt}dimensional $C^1$ local unstable manifold  $\beta_l:=W^u_l(q,T|_{M_2})$  of $q$ for $T|_{M_2}$.
Therefore, $q$ is a saddle for $T|_{M_2}$ and hence for $T|_{S}$.  Moreover, by Corollary \ref{coro-index}, one has
\begin{equation*}
  index(T|_S,q) = index(T,q) = -1.
\end{equation*}

Theorem \ref{thm:inv-local-mani} implies that $\alpha_l = R^{-1}(W^s_l(q,T|_{M_2}))$ is a one\nobreakdash-\hspace{0pt}dimensional $C^0$ local stable manifold of $q$ for $T|_S$ and the global stable manifold is given by
\begin{equation*}
  \alpha_g := \bigcup_{k=0}^{\infty}(T|_S)^{-k}(\alpha_l).
\end{equation*}
Moreover, $\beta_g = \bigcup_{k=0}^{\infty}T^{k}(\beta_l)$ is the $C^1$ global unstable manifold of $q$ for $T|_S$.
\newline In particular, the above discussion allows us to prove the following result.

\begin{corollary}\label{Coro-3}
Assume that $T$ satisfies the conditions of Lemma \ref{Lemma-1}  and has four fixed points $\{a_{1},a_{2},r_{1},r_{2}\}$ on the boundary of $S$ with $a_1,a_2$ local attractors and $r_1,r_2$ local repellers.
Then $q$ is a saddle on $S$ so that the global stable manifold $\alpha_g$ of $q$ is a $C^0$ curve joining $r_1$ and $r_2$ and the global unstable manifold $\beta_g$ of $q$ is a $C^1$ curve joining $a_1$ and $a_2$. Moreover, $\alpha_g\cap\beta_g=\{q\}$, and $\alpha_g\cup\beta_g\cup\{a_1,a_2,r_1,r_2\}$ partition $S$ into four invariant components.
\end{corollary}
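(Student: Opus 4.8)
The plan is to transfer the trivial dynamics of $T|_S$ supplied by Lemma~\ref{Lemma-1} onto the genuine local stable and unstable manifolds produced by Theorem~\ref{thm:inv-local-mani}, and then to argue by elementary planar topology inside the disk $S$. As recorded just before the statement, $q$ is a saddle on $S$ with $\mathrm{index}(T|_S,q)=-1$, $\alpha_l$ is a one-dimensional $C^0$ local stable manifold and $\beta_l=W^u_l(q,T|_S)$ a one-dimensional $C^1$ local unstable manifold of $q$, each an arc through $q$ with two branches, and $\alpha_g=\bigcup_{k\ge0}(T|_S)^{-k}(\alpha_l)$, $\beta_g=\bigcup_{k\ge0}T^{k}(\beta_l)$. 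By Theorem~\ref{thm:Grobman-Hartman} (or directly from Theorem~\ref{thm:inv-local-mani}), $\alpha_g$ is exactly the stable set $\{p\in S:T^{m}(p)\to q\}$ of $q$ for $T|_S$ and, symmetrically, $\beta_g$ is the stable set of $q$ for $(T|_S)^{-1}$. Since $\partial S=S\cap\partial\mathbb{R}^3_+$ is invariant while $q\in\inte{\mathbb{R}^3_+}$, no orbit in $\partial S$ converges to $q$, so $\alpha_g\subset\inte{S}$ and $\beta_g\subset\inte{S}$; and because $T|_S$ contracts $\alpha_l$ toward $q$ while $\beta_l$ is iterated forward, each of $\alpha_g$, $\beta_g$ is an injectively immersed arc through the interior point $q$.

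Next I would identify the endpoints. For each branch $\beta_g^{\pm}$ of $\beta_g$, Lemma~\ref{Lemma-1} forces the forward $T$-orbit of a point marching outward along it to converge to a fixed point; a short argument (this fixed point attracts a neighborhood, so the whole ``tail'' of the branch is squeezed into it) shows $\overline{\beta_g^{\pm}}$ is $\beta_g^{\pm}$ with one fixed point adjoined, necessarily a local attractor, hence $a_1$ or $a_2$. The crucial point is that the two branches accumulate on \emph{distinct} attractors, i.e.\ that there is no homoclinic loop at $q$: otherwise $\overline{\beta_g}$ would be a Jordan curve in $S$ bounding a $T|_S$-invariant sub-disk with no fixed point in its interior, and combining $\mathrm{index}(T|_S,q)=-1$, the fixed-point indices at the four boundary nodes and the planar degree / translation-arc arguments of \cite{Ruiz-Herrera2013,Jiang2016,nonlinearity2018} rules this out. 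Hence, after relabelling, $\overline{\beta_g}=\beta_g\cup\{a_1,a_2\}$ is a simple arc joining $a_1$ to $a_2$ through $q$, and $\beta_g$ is $C^1$ by Theorem~\ref{thm:inv-local-mani}. Applying the same reasoning to $(T|_S)^{-1}$ — whose local attractors among the boundary fixed points are $r_1,r_2$ and whose unstable manifold of $q$ is $\alpha_g$ — gives that $\overline{\alpha_g}=\alpha_g\cup\{r_1,r_2\}$ is a simple $C^0$ arc joining $r_1$ to $r_2$ through $q$.

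It remains to see $\alpha_g\cap\beta_g=\{q\}$ and to describe the partition. The first is once more a ``no homoclinic point'' statement: a point $p\in(\alpha_g\cap\beta_g)\setminus\{q\}$ would have a full $T$-orbit bi-asymptotic to $q$ (indeed $p\in\beta_g\setminus\{q\}$ gives forward limit $a_1$ or $a_2$ while $p\in\alpha_g$ gives forward limit $q$, the remaining case being a homoclinic point), contradicting the trivial-dynamics/degree picture of \cite{nonlinearity2018}. Given this, $\Sigma:=\overline{\alpha_g}\cup\overline{\beta_g}$ is the union of four arcs issuing from the interior point $q$, pairwise disjoint apart from $q$, with free endpoints $a_1,r_1,a_2,r_2$ on $\partial S$; since $\alpha_l$ and $\beta_l$ cross at $q$ only (the two eigendirections of $DT(q)|_W$), the four branches leave $q$ in the four ``quadrants'' of that crossing, forcing the cyclic order on $\partial S$ to be $a_1,r_1,a_2,r_2$. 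By the Jordan curve theorem these four arcs cut $S$ into exactly four curvilinear triangles, each bounded by two consecutive arcs of $\Sigma$ and one sub-arc of $\partial S$.

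Finally, the four regions are invariant: $T|_S$ fixes $q$ and each of $a_1,a_2,r_1,r_2$ and maps $\alpha_g$, $\beta_g$ onto themselves, hence sends each of the four arcs of $\Sigma$ onto itself; being a homeomorphism of $S$ that fixes $\Sigma$ arc-by-arc, it maps each complementary region onto itself. This finishes the proof. The main obstacle is precisely the absence of homoclinic phenomena at $q$ invoked twice above — that $\overline{\alpha_g}$, $\overline{\beta_g}$ are arcs rather than loops and that they meet only at $q$ — for which the index value $-1$ together with the planar degree theory of the cited papers is essential; everything else is routine once the global manifolds are known to be embedded arcs.
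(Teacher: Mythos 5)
Your route is genuinely different from the paper's, and it stands or falls on exactly the two claims you defer to outside sources: (i) the two branches of $\beta_g$ limit on \emph{distinct} attractors, and (ii) $\alpha_g\cap\beta_g=\{q\}$. As written, neither is proved. The index computation you sketch for (i) does not close the gap: if $\overline{\beta_g}=\beta_g\cup\{a_1\}$ were a Jordan curve, the fixed points $q$ and $a_1$ would lie \emph{on} that curve, so ``an invariant sub-disk with no fixed point in its interior'' contradicts no Brouwer-type theorem (which only guarantees a fixed point in the \emph{closed} disk), and the ``sum of indices at the four boundary nodes'' bookkeeping is never carried out. For (ii), a homoclinic point has forward orbit converging to $q$ and backward orbit converging to $q$, which is perfectly compatible with the trivial dynamics of Lemma~\ref{Lemma-1}; so ``contradicting the trivial-dynamics/degree picture of \cite{nonlinearity2018}'' is a gesture, not an argument. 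Since everything before these two points is already established in the discussion preceding the corollary, (i) and (ii) \emph{are} the content of the statement, and the proof as proposed has a genuine gap there.

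The paper takes a different decomposition that makes both issues disappear: since every forward orbit converges to one of the five fixed points and the repellers attract nothing, $S=B_{a_1}\cup B_{a_2}\cup\alpha_g\cup\{r_1,r_2\}$ with $B_{a_1},B_{a_2}$ open, disjoint and simply connected basins; hence $\alpha_g$ is precisely the set separating the two basins inside $\inte{\mathbb{R}^3_+}$, which forces it to be a single arc joining $r_1$ and $r_2$, and the symmetric decomposition for $(T|_S)^{-1}$ handles $\beta_g$; the intersection and four-component statements then fall out of the two decompositions instead of being inputs. If you prefer to keep your branch-chasing route, (i) can be repaired without degree theory: the component of $S\setminus\overline{\beta_g}$ disjoint from $\partial S$ would be a nonempty, open, invariant, fixed-point-free set whose closure contains no fixed points other than $q$ and $a_1$; by Lemma~\ref{Lemma-1} every backward orbit in it converges to one of these, but backward convergence to the local attractor $a_1$ is impossible, and backward convergence to $q$ would put the point on $\beta_g$, which lies on the boundary of the component --- a contradiction. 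A comparable argument (or the paper's basin decomposition) is still needed for (ii); until one is supplied, the proof is incomplete.
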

The phase portrait on $S$ is as shown in either Fig. \ref{fig-global-ds}(a) or  Fig. \ref{fig-global-ds}(b).
\begin{figure}[h!]
    \centering
    \begin{tabular}{ll}
        \subfigure[]{
            \begin{minipage}[b]{0.32\textwidth}
                \centering                \includegraphics[width=\textwidth]{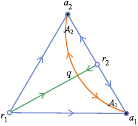}
                \smallskip
            \end{minipage}
        } &
        \subfigure[]{
            \begin{minipage}[b]{0.32\textwidth}
                \centering
                \includegraphics[width=\textwidth]{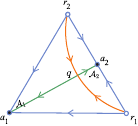}
                \smallskip
            \end{minipage}
        } \\
    \end{tabular}
    \caption{The phase portrait on the carrying simplex $S$, where $\mathcal{A}_i$ denotes the basin of attraction of $a_i$, $i=1,2$. (a) The two attractors $a_1,a_2$ lie on the same edge of $S$. (b) The two repellers  $r_1,r_2$ lie on the same edge of $S$. A fixed point is represented by a closed bullet $\bullet$ if it attracts on $S$, by an open bullet $\circ$ if it repels on $S$, and by the intersection of its stable and unstable manifolds if it is a saddle on $S$.    }
    \label{fig-global-ds}
\end{figure}
\begin{proof}
The first part of the conclusion follows from Theorem \ref{tconjugacy} and Lemma \ref{Lemma-1} immediately.
We now prove the second part of the conclusion. Assume that $\{a_{1},a_{2},r_{1},r_{2}\}$ are four fixed points on the boundary of $S$ with $a_1,a_2$ local attractors and $r_1,r_2$ local repellers. Since $T|_{S}$ has trivial dynamics and $\{r_{1},r_{2}\}$ are local repellers, we have that
\begin{equation}\label{partition}
S = B_{a_{1}} \cup B_{a_{2}} \cup \alpha_g \cup \{r_{1},r_{2}\},
\end{equation}
where $B_{a_{i}}$ is the basis of attraction of $a_{i}$ for the map $T|_{S}$, that is,
\begin{equation*}
  \{p \in S: (T|_{S})^{m}(p) \to a_{i} \text{ as } m \to \infty\}.
\end{equation*}
Notice that $B_{a_{i}}$ is an open set (relative to $S$) and simply connected. Furthermore, $B_{a_{1}}\cap B_{a_{2}}=\emptyset$.  By a simple analysis of the dynamical behavior of $T|_{S}$ on $\partial S$, we have that $\{r_{1},r_{2}\}\in \partial B_{a_{1}}$ and  $\{r_{1},r_{2}\}\in \partial B_{a_{2}}$. Using that $B_{a_{1}}$ and $B_{a_{2}}$ are open and disjoint sets, we can conclude from (\ref{partition}) that
\begin{equation*}
  \partial B_{a_{1}}\cap S\cap \inte{\mathbb{R}^{3}_{+}} \subset \alpha_g,
\end{equation*}
\begin{equation*}
  \partial B_{a_{2}}\cap S\cap \inte{\mathbb{R}^{3}_{+}} \subset \alpha_g,
\end{equation*}
because $\partial B_{a_{1}} \cap B_{a_{2}} = \emptyset$ and $\partial B_{a_{2}} \cap B_{a_{1}} = \emptyset$.  Notice that from (\ref{partition}), $\alpha_g$ is a curve that divides $S$ into two connected components. Moreover, by the previous discussion, it is clear that $\alpha_g$ joins $\{r_{1},r_{2}\}$. By similar arguments, we can prove that $\beta_g$ is a $C^1$ curve joining $a_1$ and $a_2$. The last conclusion is now immediate.
\end{proof}

Corollary \ref{Coro-3} means that if we also know some information on the boundary dynamics, then the global dynamics and the structure of the invariant manifolds on the carrying simplex can be described clearly further. In applications, many models are the same as or similar to the map studied in Corollary \ref{Coro-3}.

\medskip

Consider the following population models:
\begin{itemize}
  \item[I.]
  Leslie--Gower model
  \begin{equation} \label{LG}
    T_i(x)=\frac{(1+r_i)x_i}{1+{ r_i\sum_{j=1}^3}a_{ij}x_j},\  r_i, a_{ij} > 0, \ i, j = 1, 2, 3;
\end{equation}
  \item[II.]
  Atkinson--Allen model
  \begin{equation}\label{equ:AA}
    T_i(x) = \frac{(1+r_i)(1-c_i)x_i}{1+r_i\sum_{j=1}^3a_{ij}x_j} + c_i x_i,\ 0 < c_i < 1, \ r_i, a_{ij} > 0, \ i, j=1,2,3;
\end{equation}
  \item[III.]
  Ricker model
  \begin{equation}\label{equ:Ricker}
    T_i(x) = x_i \exp\Bigl(r_i(1-\sum_{j=1}^3a_{ij}x_j)\Bigr), \ r_i,a_{ij} > 0, \ i, j = 1,2,3.
\end{equation}
\end{itemize}
The conditions (A1)--(A3) hold for the Leslie--Gower model \eqref{LG} and the Atkinson--Allen model \eqref{equ:AA}, so they admit a carrying simplex $S$; see \cite{Jiang2017, Gyllenberg2018a}. The Ricker model \eqref{equ:Ricker} satisfies the conditions (A1)--(A3) if
\begin{equation}\label{Ricker-con-1}
r_i<a_{ii}/\sum_{j=1}^3a_{ij},\text{ or }  r_i<1/\Big(\sum_{j=1}^3\frac{a_{ij}}{a_{jj}}\Big),~i=1,2,3.
\end{equation}
Therefore, it has a carrying simplex $S$ under condition \eqref{Ricker-con-1}; see \cite{Gyllenberg2018b}.

It was shown that there are $33$ stable equivalence classes via an equivalence relation on the boundary dynamics for these three kinds of models; see \cite{Jiang2017, Jiang2016, Gyllenberg2018a, Gyllenberg2018b, nonlinearity2018} for details. According to these papers, the models have a unique interior fixed point $q$ with index $-1$ such that every orbit converges to a fixed point in the classes 19--25. Besides, there are two attracting and two repelling fixed points on the boundary, and the other boundary fixed points (if any) do not attract or repel anything from the interior. That is the models in classes 19--25 are the same as or similar to the map discussed in Corollary \ref{Coro-3}. Thus, we obtain that for these classes the global unstable manifold of $q$ is a $C^1$ curve and the global stable manifold is a $C^0$ curve on the carrying simplex. The dynamical behavior for these systems is given by Corollary \ref{Coro-3}. See Table \ref{table-para-cons} for the precise values of the parameters and the phase portraits on the carrying simplex.  These results solve some open problems suggested in \cite{Jiang2017, Jiang2016, Gyllenberg2018a, Gyllenberg2018b, nonlinearity2018}.

\begin{center}
  \begin{longtable}{c@{\extracolsep{\fill}}c@{\extracolsep{\fill}}c}
\caption{Equivalence classes $19-25$ for models \eqref{LG}, \eqref{equ:AA} and \eqref{equ:Ricker}, where
$\alpha_{ij}:=a_{ii}-a_{ji},~~ \beta_{ij}:=\frac{a_{jj}-a_{ij}}{a_{ii}a_{jj}-a_{ij}a_{ji}}$ for $i,j=1,2,3$ and $i\neq j$. The carrying simplex $S$ with corresponding parameters in each class is given by a representative element in that class, i.e. there exists a permutation of $\{1,2,3\}$ after which parameters of the map satisfy the corresponding inequalities in that class (for the Ricker model \eqref{equ:Ricker}, in addition to the parameter conditions listed for each class, the parameters should also satisfy the additional condition \eqref{Ricker-con-1}). The $s_{\{i\}}$ (resp. $v_{\{i\}}$) denotes a fixed point on the $i$th coordinate axis (resp. plane). The fixed point notation is as in Figure \ref{fig-global-ds}.}\\[-2pt]
        \hline
        \quad Class \qquad & The corresponding parameters   & \qquad Phase portrait in $S$\qquad\\
        \hline
        \endfirsthead
        \caption[]{(continued)}\\
        \hline
        \quad Class \qquad & The Corresponding Parameters   & \qquad Phase Portrait in $S$\qquad\\
        \hline
&&\\
        \endhead
        \hline
        \endfoot
        \endlastfoot
&&\\
19 &
\begin{tabular}{ll} {
 (i)}&{$\alpha_{12}>0, \alpha_{13}>0, \alpha_{21}<0$,} \\
&{$\alpha_{23}<0, \alpha_{31}<0, \alpha_{32}<0$}\\[2pt]
{  (ii)}&{$a_{12}\beta_{23}+a_{13}\beta_{32}<1$
} \end{tabular}
&
    \parbox{2.5cm}{\vspace{2pt}\includegraphics[width=2.5cm]{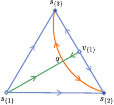}\vspace{2pt}} \\
&&\\
    20 &
\begin{tabular}{ll} {
 (i)}&{$\alpha_{12}<0, \alpha_{13}<0, \alpha_{21}<0$},\\
 &{$\alpha_{23}<0, \alpha_{31}>0, \alpha_{32}<0$}\\[2pt]
{  (ii)}&{$a_{12}\beta_{23}+a_{13}\beta_{32}<1$}\\[2pt]
{  (iii)}&{$a_{31}\beta_{12}+a_{32}\beta_{21}<1$
} \end{tabular}
 &
    \parbox{2.5cm}{\vspace{2pt}\includegraphics[width=2.5cm]{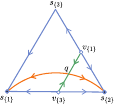}\vspace{2pt}} \\
&&\\
21 &
\begin{tabular}{ll} {
 (i)}&{$\alpha_{12}<0, \alpha_{13}<0, \alpha_{21}<0$},\\
 &{$\alpha_{23}>0, \alpha_{31}<0, \alpha_{32}>0$}\\[2pt]
{  (ii)}&{$a_{12}\beta_{23}+a_{13}\beta_{32}>1$}\\[2pt]
{  (iii)}&{$a_{21}\beta_{13}+a_{23}\beta_{31}<1$}\\[2pt]
{  (iv)}& {$a_{31}\beta_{12}+a_{32}\beta_{21}<1$
} \end{tabular}
&
    \parbox{2.5cm}{\vspace{2pt}\includegraphics[width=2.5cm]{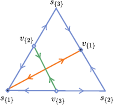}\vspace{2pt}} \\
&&\\
    22 &
\begin{tabular}{ll} {
 (i)}&{$\alpha_{12}>0, \alpha_{13}>0, \alpha_{21}<0$},\\
 &{$\alpha_{23}<0, \alpha_{31}>0, \alpha_{32}<0$}\\[2pt]
{  (ii)}&{$a_{12}\beta_{23}+a_{13}\beta_{32}<1$}\\[2pt]
{  (iii)}& {$a_{21}\beta_{13}+a_{23}\beta_{31}>1$
} \end{tabular}
&
    \parbox{2.5cm}{\vspace{2pt}\includegraphics[width=2.5cm]{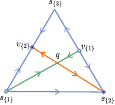}\vspace{2pt}} \\
&&\\
23 &
\begin{tabular}{ll} {
 (i)}&{$\alpha_{12}>0, \alpha_{13}>0, \alpha_{21}>0$},\\
 &{$\alpha_{23}>0, \alpha_{31}<0, \alpha_{32}<0$}\\[2pt]
{  (ii)}&{$a_{31}\beta_{12}+a_{32}\beta_{21}>1$
} \end{tabular}
&
    \parbox{2.5cm}{\vspace{2pt}\includegraphics[width=2.5cm]{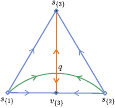}\vspace{2pt}} \\
&&\\
    24 &
\begin{tabular}{ll} {
 (i)}&{$\alpha_{12}>0, \alpha_{13}>0, \alpha_{21}>0$},\\
 &{$\alpha_{23}>0, \alpha_{31}<0, \alpha_{32}>0$}\\[3pt]
{  (ii)}&{$a_{12}\beta_{23}+a_{13}\beta_{32}>1$}\\[3pt]
{  (iii)}&{$a_{31}\beta_{12}+a_{32}\beta_{21}>1$
} \end{tabular}
&
    \parbox{2.5cm}{\vspace{2pt}\includegraphics[width=2.5cm]{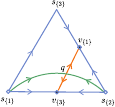}\vspace{2pt}} \\
&&\\
25 &
\begin{tabular}{ll} {
 (i)}&{$\alpha_{12}>0, \alpha_{13}>0, \alpha_{21}>0$},\\
 &{$\alpha_{23}<0, \alpha_{31}>0, \alpha_{32}<0$}\\[2pt]
{  (ii)}&{$a_{12}\beta_{23}+a_{13}\beta_{32}<1$}\\[2pt]
{  (iii)}&{$a_{21}\beta_{13}+a_{23}\beta_{31}>1$}\\[2pt]
{  (iv)}&{$a_{31}\beta_{12}+a_{32}\beta_{21}>1$
} \end{tabular}
&
    \parbox{2.5cm}{\vspace{2pt}\includegraphics[width=2.5cm]{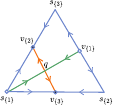}\vspace{2pt}} \\
&&\\
[-8pt]\label{table-para-cons}
\end{longtable}
\end{center}

\section*{Acknowledgement}
The authors are greatly indebted to the referee whose suggestions led to much improvement in the presentation of our results.

The work of J. Mierczy\'nski was supported by project 0401/0155/18.  The work of L. Niu was supported by the Academy of Finland via the Centre of Excellence in Analysis and Dynamics Research (project No. 307333).  The work of A. Ruiz-Herrera was supported by project MTM2017-87697P.

\bibliographystyle{elsarticle-num-names}

\bibliography{Linearization1}

\end{document}